\newtheorem{theorem}{Theorem}
\newtheorem{lemma}{Lemma}
\newtheorem{assumption}{Assumption}
\newtheorem{definition}{Definition}
\newtheorem{corollary}{Corollary}
\newcommand\crule[1][5cm]{%
  \par
  \nointerlineskip
  \centerline{\hbox to #1{\hrulefill}}%
  \nointerlineskip}
\numberwithin{equation}{section}
\numberwithin{algorithm}{section}
\newcommand{\be}{\begin{equation}}
\newcommand{\ee}{\end{equation}}
\newcommand{\bee}{\begin{equation*}}
\newcommand{\eee}{\end{equation*}}
\newcommand{\bea}{\begin{eqnarray}}
\newcommand{\eea}{\end{eqnarray}}
\newcommand{\beaa}{\begin{eqnarray*}}
\newcommand{\eeaa}{\end{eqnarray*}}
\title{Group Sparse Optimization for Inpainting of Random Fields on the Sphere
\footnote{This work was partly supported by Hong Kong Research Grant Council PolyU15300120 and  the Shenzhen Research Institute of Big Data Grant 2019ORF01002.}}
\author{Chao Li\footnote{Department of Applied Mathematics, The Hong Kong Polytechnic University({chaoo.li@connect.polyu.hk}).} \ and\ Xiaojun Chen \footnote{Department of Applied Mathematics, The Hong Kong Polytechnic University({xiaojun.chen@polyu.edu.hk}).}}
\date{}
\begin{document}

\maketitle

\begin{abstract}
{We propose a group sparse optimization model
for inpainting of a square-integrable isotropic random field on the unit sphere,
where the field is represented by spherical harmonics with  random complex coefficients. In the proposed optimization model, the variable is an  infinite-dimensional complex vector and the objective function is a real-valued  function defined by a hybrid of the $\ell_2$ norm and non-Liptchitz  $\ell_p (0<p<1)$ norm that preserves rotational invariance property and  group structure of the random complex coefficients.
 We show that the infinite-dimensional optimization problem  is equivalent to a convexly-constrained finite-dimensional optimization problem.
Moreover, we propose a smoothing penalty algorithm to solve the finite-dimensional problem via  unconstrained optimization problems.
We provide an approximation error bound of the inpainted random field defined  by a scaled KKT point of the constrained optimization problem
in the square-integrable space on the sphere with probability measure.
Finally, we conduct numerical experiments on band-limited random fields  on the sphere and images  from Earth topography data to show the promising performance of the smoothing penalty algorithm for inpainting of random fields on the sphere.}\\
{\bf{ Keywords}}:
{group sparse optimization; exact penalty; smoothing method; random field.}
\end{abstract}

\section{Introduction}
\label{sec:intro}
{Let $(\Omega,\mathcal{F}, P)$ be a probability space, and let $\mathfrak{B}(\mathbb{S}^{2})$ be the Borel sigma algebra on the unit sphere $\mathbb{S}^{2}:=\{{\rm{x}}\in\mathbb{R}^{3}:\|\rm{x}\|=1\}$, where $\|\cdot\|$ is the $\ell_2$ norm.
A real-valued random field on $\mathbb{S}^{2}$ is an $\mathcal{F}\otimes \mathfrak{B}(\mathbb{S}^{2})$-measurable function $T: \Omega\times\mathbb{S}^{2}\rightarrow\mathbb{R}$, and it is said to be $2$-weakly isotropic if the expected value and covariance of $T$ are rotationally invariant} (see \cite{q2018}).
It is known that a $2$-weakly isotropic random field on the sphere has the following Karhunen-Lo\`{e}ve (K-L) expansion \cite{lang,M2011},
\begin{equation}\label{K-L}
T(\omega,{\rm{x}})=\sum_{l=0}^{\infty}\sum_{m=-l}^{l}\alpha_{l,m}(\omega)Y_{l,m}({\rm{x}}), \quad {\rm{x}}\in\mathbb{S}^{2},\quad \omega\in\Omega,
\end{equation}
where  $\alpha_{l,m}(\omega)=\int_{\mathbb{S}^{2}}T(\omega,{\rm{x}})\overline{Y_{l,m}({\rm{x}})}d\sigma({\rm{x}})\in\mathbb{C}$
are random spherical harmonic coefficients of $T(\omega,{\rm{x}})$, $Y_{l,m}$, for $m=-l,\ldots,l$, $l=0,1,2,\ldots$ are spherical harmonics with degree  $l$ and order $m$, and $\sigma$ is the surface measure on  $\mathbb{S}^{2}$ satisfying  $\sigma(\mathbb{S}^2)=4\pi.$
For convenient, let
$$\alpha(\omega)=(\alpha_{0,0}(\omega), \underbrace{\alpha_{1,-1}(\omega),\alpha_{1,0}(\omega),\alpha_{1,1}(\omega)}_{3},\ldots,\underbrace{\alpha_{1,-l}(\omega),\ldots,\alpha_{l,l}(\omega)}_{2l+1},\ldots)^T
$$
denote the coefficient vector of an isotropic random field $T(\omega,\mathrm{x})$.
The infinite-dimensional coefficient vector $\alpha(\omega)$
can be grouped according to degrees $l$ and written in the following group structure
\begin{equation}\label{gstru}
  \alpha(\omega)=(\alpha_{0\cdot}^T(\omega),\alpha_{1\cdot}^T(\omega),\ldots,\alpha^T_{l\cdot}(\omega),\ldots)^T,
\end{equation}
where $$\alpha_{l\cdot}(\omega)=(\alpha_{l,-l}(\omega),\ldots,\alpha_{l,0}(\omega),\ldots,\alpha_{l,l}(\omega))^T\in\mathbb{C}^{2l+1}, \quad \quad l\ge0.$$
From \cite{q2018}, we know that for any given degree $l\ge1$ and any $\omega\in \Omega$, the sum
$$\sum_{m=-l}^{l}\vert\alpha_{l,m}(\omega)\vert^2=\|\alpha_{l\cdot}(\omega)\|^2$$
is rotationally invariant, while $\sum_{m=-l}^{l}\vert\alpha_{l,m}(\omega)\vert$ is not invariant under rotation of the coordinate axes. Moreover,   in \cite{q2018}
 the angular power spectrum is given by
$$C_l=\frac{1}{2l+1}\mathbb{E}[\|\alpha_{l\cdot}(\omega)\|^2] $$
if the expected value $\mathbb{E}[T(\omega,{\rm{x}})] =0$ for all ${\rm{x}}\in \mathbb{S}^{2}$.  In the study of isotropic random fields \cite{Creasey,lang,M2011},
the angular power spectrum plays an important role since it contains full information of the covariance of the field and provides characterization of the field.
Hence, considering the group structure (\ref{gstru}) of the coefficients of an isotropic random field is essential.

Sparse representation of a random field $T$ is an approximation of $T$ with few non-zero elements $\alpha_{l,m}(\omega)$ of $\alpha(\omega)$. In group sparse representation,   instead of considering elements individually, we seek an approximation of $T$ with few non-zero groups $\alpha_{l\cdot}(\omega)$ of $\alpha(\omega)$, i.e., few non-zero entries of the following vector
 $$(\|\alpha_{0\cdot}(\omega)\|, \, \|\alpha_{1\cdot}(\omega)\|, \ldots, \|\alpha_{l\cdot}(\omega)\|,\ldots,)^T.$$
In recent years, sparse representation of isotropic random fields has been extensively studied.   In \cite{c2015} the authors studied the sparse representation of random fields via an $\ell_{1}$-regularized minimization problem.
In \cite{q2018}, the authors considered isotropic group sparse representation of Gaussian and isotropic random fields on the sphere through an unconstrained optimization problem with a weighted $\ell_{2,1}$ norm,  which is an example of   group Lasso \cite{yuan2006}.
In \cite{chen20},  the authors proposed a non-Lipschitz regularization problem  with a weighted $\ell_{2,p}$ ($0<p<1$) norm  for the group sparse representation of isotropic random fields.
The non-Lipschitz regularizer also preserves the rotational invariance property of $\|\alpha_{l\cdot}(\omega)\|^2$ for any given degree $l\ge1$ and any $\omega\in\Omega$.

Isotropic random fields on the sphere have many applications
\cite{Cabella,Jeong,Oh,Porcu,Stein},
especially in the study of the Cosmic Microwave Background (CMB) analysis
\cite{A2007,S2013}.
However,
the true spherical random field usually presents masked regions and missing observations.
Many inpainting methods \cite{A2007,Feeney,S2013,JD} have been proposed for recovering the true field based on sparse representation.
However, they did not take  the group structure (\ref{gstru}) of the coefficients into consideration.

Group sparse optimization  models have been successfully used in signal processing, imaging sciences and predictive analysis  \cite{beck2019,chen2021,huang2010,huang2009,pan2021}.
In this paper,
 we propose a constrained group sparse optimization model for inpainting of isotropic random fields on the sphere based on group structure (\ref{gstru}).
 Moreover,  to recover the true random field by using information only on a subset $\Gamma\subseteq\mathbb{S}^2$ which contains an open set,
 we need the unique continuation property \cite{isakov2006} of any realization of a
 random field,
that is, for any fixed $\omega\in\Omega$, if the value of a field equals to zero on $\Gamma$, then the random field is identically zero on the sphere (see Appendix A for more details).

Let  $L_{2}(\Omega\times\mathbb{S}^{2})$ be the $L_{2}$ space on the product space $\Omega\times\mathbb{S}^{2}$ with product measure $P\otimes\sigma$ and  the induced norm $\|\cdot\|_{L_{2}(\Omega\times\mathbb{S}^{2})}=\mathbb{E}[\|\cdot\|_{L_{2}(\mathbb{S}^{2})}]$,
where
$L_{2}(\mathbb{S}^{2})$ is the space of square-integrable functions over $\mathbb{S}^{2}$ endowed with the inner product
$$\langle f, g \rangle_{L_{2}(\mathbb{S}^{2})}=\int_{\mathbb{S}^{2}}f({\rm{x}})\overline{g({\rm{x}})}d\sigma({\rm{x}}), \quad \quad \forall f, g\in L_{2}(\mathbb{S}^{2}),$$
and the induced $L_{2}$-norm $\|f\|_{L_{2}(\mathbb{S}^{2})}=(\langle f, f \rangle_{L_{2}(\mathbb{S}^{2})})^{\frac{1}{2}}$.
By Fubini's theorem, for $\omega\in \Omega$, $T(\omega,{\rm{x}})\in L_{2}(\mathbb{S}^{2})$, $P$-a.s., where
 ``$P$-a.s." stands for almost surely with probability $1$.
For brevity, we write
$T(\omega,{\rm{x}})$ as $T({\rm{x}})$ or $T$ if no confusion arises.

Let the observed field be given by
\begin{equation}\label{t2}
 T^{\circ}:=\mathcal{A}(T^*)+\Delta,
 \end{equation}
where $T^*\in L_{2}(\Omega \times \mathbb{S}^{2})$ is the true isotropic random field that we aim to recover, $\Delta:\Omega\times\mathbb{S}^{2}\rightarrow\mathbb{R}$ is observational noise and
$\mathcal{A}: L_{2}(\Omega\times \mathbb{S}^{2})\rightarrow L_{2}(\Omega \times \mathbb{S}^{2})$ is  an inpainting operator defined by
\begin{equation}
\mathcal{A}(T({\rm{x}}))=\left\{\begin{array}{cl}
                           T({\rm{x}}) & \, {\rm if} \ {\rm{x}}\in  \Gamma\\
                        0 & \, {\rm if} \ {\rm{x}}\in \mathbb{S}^{2}\setminus\Gamma,\\
                        \end{array}
\right.
\end{equation}
 where $\mathbb{S}^{2}\setminus\Gamma \subset\mathbb{S}^{2}$ is a nonempty inpainting area and the set $ \Gamma\subset\mathbb{S}^{2}$  has an open subset.
 In our optimization model, we consider the observed field $T^{\circ}$ as one realization of a random field.
  For notational simplicity,  let
  \begin{equation}\label{gstru:o}
    \alpha=(\alpha^{T}_{0\cdot},\alpha^{T}_{1\cdot},\ldots,\alpha^{T}_{l\cdot},\ldots)^{T}
  \end{equation}
  denote the spherical harmonic coefficient vector of an isotropic random field $T$ for a fixed $\omega\in\Omega$,
 where $\alpha_{l\cdot}=(\alpha_{l,-l},\ldots,\alpha_{l,0},\ldots,\alpha_{l,l})^T\in\mathbb{C}^{2l+1}$, $l\in \mathbb{N}_{0}:=\{0,1,2,\ldots\}$.

By Parseval's theorem, we have
$$\|T\|^{2}_{L_{2}(\mathbb{S}^{2})}=\sum\limits_{l=0}^{\infty}\sum\limits_{m=-l}^{l}\vert\alpha_{l,m}\vert^{2}=\sum\limits_{l=0}^{\infty}\|\alpha_{l\cdot}\|^{2}<\infty.$$
{Hence the sequence $\{\|\alpha_{l\cdot}\|\}_{l\in\mathbb{N}_{0}}\in \ell^{2}(\mathbb{N}_{0})$, where $\ell^{2}(\mathbb{N}_{0})$ is the space of square-summable sequences.}
We write $\alpha\in\ell^{2}$ to indicate $\{\|\alpha_{l\cdot}\|\}_{l\in\mathbb{N}_{0}}\in \ell^{2}(\mathbb{N}_{0})$.

{The number of non-zero groups of $\alpha$ with group structure (\ref{gstru:o}) is calculated by
 \begin{equation*}\label{l2pnorm}
  \|\alpha\|_{2,0}=\sum\limits_{l=0}^{\infty}\|\alpha_{l\cdot}\|^{0},
\end{equation*}
where
$\|\alpha_{l\cdot}\|^{0}=\left\{\begin{array}{cc}
                                      1 & \textrm{if}\,\, \|\alpha_{l\cdot}\|>0\\
                                      0 &\textrm{otherwise,}
                                    \end{array}
                                    \right.
$
and $\|\alpha\|_{2,0}$ is called $\ell_{2,0}$ norm of  $\alpha$.  The $\ell_{2,0}$ norm is discontinuous and $\|T\|^{2}_{L_{2}(\mathbb{S}^{2})}<\infty$ does not ensure $\|\alpha\|_{2,0}<\infty$.    In \cite{q2018}, the authors used a weighted $\ell_{2,1}$ norm of  $\alpha$ defined by
\begin{equation*}\label{l2pnorm}
  \|\alpha\|_{2,1}=\sum\limits_{l=0}^{\infty}\beta_l\|\alpha_{l\cdot}\|,
\end{equation*}
where $\beta_l>0$.  It is easy to see that $\lim_{p\downarrow0} \|\alpha_{l\cdot}\|^{p}=\|\alpha_{l\cdot}\|^{0}$.   The $\ell_{2,p}$ norm $(0<p<1)$ has been widely used in sparse approximation \cite{chen2021,huang2010}. In this paper, we use a weighted $\ell_{2,p}$ norm of  $\alpha$.
Let $$\ell^p_{\beta}:=\left\{\alpha\in\ell^2:\|\alpha\|_{2,p}^{p}:=\sum_{l=0}^{\infty}\beta_{l}{{\|\alpha_{l\cdot}\|^{p}}}
  <\infty\right\}$$ be a weighted $\ell^p$ ($0<p<1$) space with positive weights    $\beta_0=1$ and $\beta_{l}=\eta^{l}l^{p}$  for $l\geq 1$, where $\eta>1$ is a constant.}
For a fixed $\omega \in \Omega$, we consider the following constrained optimization problem
\begin{equation}\label{inin:model}
  \begin{array}{cl}
    \min\limits_{\alpha\in\ell^p_{\beta}} &  \|\alpha\|_{2,p}^{p} \\
    {\rm{s.t.}} &  \|\mathcal{A}(T({\rm{x}}))-T^{\circ}({\rm{x}})\|^{2}_{L_{2}(\mathbb{S}^{2})}\leq \varrho,
  \end{array}
\end{equation}
 where  $\varrho>\int_{\mathbb{S}^{2}\setminus\Gamma}\vert T^\circ({\rm{x}})\vert^2d\sigma(\rm{x})$.

 {A feasible field of problem (\ref{inin:model}) takes the form
 $$T({\rm{x}})=\sum_{l=0}^{\infty}\sum_{m=-l}^{l}\alpha_{l,m}Y_{l,m}({\rm{x}}), \quad {\rm{x}}\in\mathbb{S}^{2}, \quad \alpha \in  \ell_\beta^p.$$
Since $T^\circ\in L_{2}(\mathbb{S}^2)$, for   $\epsilon=\varrho-\int_{\mathbb{S}^{2}\setminus\Gamma}\vert T^\circ({\rm{x}})\vert^2d\sigma(\rm{x})$,
there is a finite number $L$ such that $\|T^\circ(\mathrm{x})-T^{\circ}_L(\mathrm{x})\|^2_{L_{2}(\mathbb{S}^2)}<\epsilon$, where $T^{\circ}_L(\mathrm{x})=\sum_{l=0}^L\sum_{m=-l}^lb_{l,m} Y_{l,m}(\mathrm{x})$ and $b_{l,m}=\int_{\mathbb{S}^2} T^{\circ}(\mathrm{x}) \overline{Y_{l,m}(\mathrm{x})}d\sigma(\mathrm{x})$. Let $b_{l,m}=0$ for $l=L+1,\ldots$, $m=-l,\ldots,l$, then $(b_{0,0},\ldots,b_{L,L},0,0,0,\ldots)^T\in\ell_{\beta}^p$.
From the definition of $\mathcal{A}$, we have
\begin{eqnarray*}
  \|\mathcal{A}(T_L^{\circ}({\rm{x}}))-T^{\circ}({\rm{x}})\|^{2}_{L_{2}(\mathbb{S}^{2})} &=& \int_{\mathbb{S}^2\setminus\Gamma}\vert T^{\circ}({\rm{x}})\vert^2d\sigma(\mathrm{x})+\int_{\Gamma}\vert T^{\circ}({\rm{x}})-T_L^{\circ}({\rm{x}})\vert^2d\sigma(\rm{x}) \\
   &<&  \int_{\mathbb{S}^2\setminus\Gamma}\vert T^{\circ}({\rm{x}})\vert^2d\sigma(\mathrm{x})+\epsilon=\varrho.
\end{eqnarray*}
 Thus, the feasible set of problem (\ref{inin:model}) is nonempty.
 Moreover  the objective function of problem (\ref{inin:model}) is continuous and level-bounded. Hence an optimal solution of problem (\ref{inin:model}) exists.}

Since $\alpha=0$ implies that $\|\alpha\|_{2,p}^p=0$,  we assume $\|T^{\circ}({\rm{x}})\|^{2}_{L_{2}(\mathbb{S}^{2})}> \varrho$, which implies  that $T({\rm{x}}) =0$ is not a feasible field of problem (\ref{inin:model}).

Our main  contributions are summarized as follows.
 \begin{itemize}
   \item Based on the K-L expansion, the constraint of problem (\ref{inin:model})  can be written in a discrete form with variables $\alpha\in\ell^p_{\beta}$.
   We derive a lower bound for the $\ell_{2}$ norm of  nonzero groups of scaled KKT points of
   (\ref{inin:model}).
   Based on the lower bound, we prove that the infinite-dimensional problem (\ref{inin:model}) is equivalent to a finite-dimensional optimization problem.
   \item We propose a penalty method for solving the finite-dimensional optimization problem via  unconstrained optimization problems.
   We establish  the exact penalization results regarding  local minimizers and $\varepsilon$-minimizers.
   Moreover, we propose a smoothing penalty algorithm and prove that the sequence generated by the algorithm is bounded and any accumulation point of the sequence is a scaled KKT point of the finite-dimensional optimization problem.
      \item We give the approximation error of the random field  represented by scaled KKT points of the  finite-dimensional optimization problem in $L_2(\Omega\times \mathbb{S}^{2}).$
 \end{itemize}

 The rest of this paper is organised as follows.
 In Section 2, 
we prove that the infinite-dimensional discrete optimization problem  is equivalent to a finite-dimensional problem.
 In Section 3, we present the penalty method and give exact penalization results.
 In Section 4, we discuss optimality conditions of the finite-dimensional optimization problem and its penalty problem.
 Moreover, we propose a smoothing penalty algorithm  and establish its convergence.
 In Section 5, we give the approximation error in $L_2(\Omega\times \mathbb{S}^{2}).$
 In Section 6, we conduct numerical experiments on band-limited random fields and images from Earth topography data to compare our approach with some existing methods on the quality of the solutions and inpainted images.  Finally, we give conclusion remarks in Section 7.

\section{Discrete formulation  of problem (\ref{inin:model})}\label{subsec:models}
\label{sec:models}
In this section, we propose the discrete formulation of problem (\ref{inin:model}) and prove that the discrete problem is equivalent to a finite-dimensional problem (\ref{fini:cmodel}).

 Based on the definition of $\mathcal{A}$ and the spherical harmonic expansion of $T$, we obtain that
\begin{eqnarray}\label{constraint}
\begin{split}
&\|\mathcal{A}(T({\rm{x}}))-T^{\circ}({\rm{x}})\|^{2}_{L_{2}(\mathbb{S}^{2})}\\
 &= \int_{\mathbb{S}^{2}}\vert\mathcal{A}(T({\rm{x}}))-T^{\circ}({\rm{x}})\vert^{2}d\sigma({\rm{x}})
 =\int_{\Gamma}\vert T({\rm{x}})-T^{\circ}({\rm{x}})\vert^{2}d\sigma({\rm{x}})
 +\int_{\mathbb{S}^{2}\setminus\Gamma}\vert T^{\circ}({\rm{x}})\vert^{2}d\sigma({\rm{x}})\\
 &=\int_{\Gamma}\bigg\vert\sum\limits_{l=0}^{\infty}\sum\limits_{m=-l}^{l}\alpha_{l,m}Y_{l,m}({\rm{x}})\bigg\vert^{2} d\sigma({\rm{x}})
-2\textrm{Re}\left(\int_{\Gamma}T^{\circ}({\rm{x}})\left(\sum\limits_{l=0}^{\infty}\sum\limits_{m=-l}^{l}\bar{\alpha}_{l,m}\overline{Y_{l,m}({\rm{x}})}\right) d\sigma({\rm{x}})\right)\\
  & \quad+\int_{\mathbb{S}^{2}}\vert T^{\circ}({\rm{x}})\vert^{2}d\sigma({\rm{x}})\\
 &=\alpha^{T}Y\bar{\alpha}-2\textrm{Re}\left(\sum\limits_{l=0}^{\infty}\sum\limits_{m=-l}^{l}\bar{\alpha}_{l,m}\int_{\Gamma}T^{\circ}({\rm{x}})\overline{Y_{l,m}({\rm{x}})} d\sigma({\rm{x}})\right)+\int_{\mathbb{S}^{2}}\vert T^{\circ}({\rm{x}})\vert^{2}d\sigma({\rm{x}})\\
 &=\alpha^{H}Y\alpha-2\textrm{Re}(\alpha^{H}\alpha^{\circ})+c,
 \end{split}
\end{eqnarray}
where $Y$ is an infinite-dimensional  matrix with
$(Y)_{l^2+l+m+1,l'^2+l'+m'+1}=\int_{\Gamma}Y_{l,m}({\rm{x}})\overline{Y_{l',m'}({\rm{x}})} d\sigma({\rm{x}})\in\mathbb{C},$ $\alpha^{\circ}=((\alpha_{0\cdot}^{\circ})^T,\ldots,(\alpha^{\circ}_{l\cdot})^T,\ldots)^T$ with $\alpha^{\circ}_{l\cdot}\in\mathbb{C}^{2l+1}$, $ \alpha^{\circ}_{l,m}=\int_{\Gamma}T^{\circ}({\rm{x}})\overline{Y_{l,m}({\rm{x}})} d\sigma({\rm{x}})$ and $  c=\int_{\mathbb{S}^{2}}\vert T^{\circ}({\rm{x}})\vert^{2}d\sigma({\rm{x}}).$

The minimization problem (\ref{inin:model}) can be written as the following optimization problem
\begin{equation}\label{model2}
  \begin{array}{cl}
    \min\limits_{\alpha\in\ell_{\beta}^{p}} &  \|\alpha\|_{2,p}^{p} \vspace{1mm}\\
    {\rm{s.t.}} &  \alpha^{H}Y\alpha-2\textrm{Re}(\alpha^{H}\alpha^{\circ})+c\leq \varrho.
  \end{array}
\end{equation}
From the setting of $\varrho$ in problem (\ref{inin:model}) and Theorem \ref{uniq}, the feasible set of
(\ref{model2}) has an interior point  and bounded.
The following assumption follows from our assumption on problem (\ref{inin:model}).
\begin{assumption}\label{assu:inpa} The feasible set of problem (\ref{model2}) does not contain $\alpha=0$.
\end{assumption}

Note that the objective function and constraint function in (\ref{model2}) are real-valued functions with complex variables. Thus, following \cite{chen20}, we apply the Wirtinger calculus (See Appendix B for more details) in this paper.

Since the objective function in problem (\ref{model2}) is not Lipschitz continuous at points containing zero groups.
We shall consider first-order stationary points of (\ref{model2})
and  extend the definition of scaled  KKT points for finite-dimensional optimization problem with real variables in \cite{chen2016,CXY,rock}.
\begin{definition}\label{def:kkt}
 We call $\alpha^*\in\ell_{\beta}^{p}$  a scaled KKT point of (\ref{model2}), if there exists $\nu\in\mathbb{R}$ such that
\begin{equation}\label{def:infi:kkt}
\begin{split}
p\beta_{l}\|\alpha^*_{l\cdot}\|^{p}\alpha^*_{l\cdot}+2\nu\|\alpha^*_{l\cdot}\|^{2}(Y_{l\cdot}\alpha^*-\alpha_{l\cdot}^\circ) = 0, & \quad \forall\,l\in\mathbb{N}_{0},\\
 \nu((\alpha^*)^{H}Y\alpha^*-2{\rm{Re}}((\alpha^*)^{H}\alpha^{\circ})+c- \varrho)=0,&\\
 (\alpha^*)^{H}Y\alpha^*-2{\rm{Re}}((\alpha^*)^{H}\alpha^{\circ})+c\leq \varrho, &\\
 \nu\geq0.&
\end{split}
\end{equation}
\end{definition}
From (\ref{def:infi:kkt}),  for any nonzero vector $\alpha_{l\cdot}^* \in \mathbb{C}^{2l+1}$, we have
\begin{equation*}
 p\beta_{l}\|\alpha^*_{l\cdot}\|^{p-2}\alpha^*_{l\cdot}+2\nu(Y_{l\cdot}\alpha^*-\alpha_{l\cdot}^\circ)=0,
 \end{equation*}
 and $\nu>0$.
Then,
$$
p\beta_{l}\|\alpha^*_{l\cdot}\|^{p-1} = 2\nu\|Y_{l\cdot}\alpha^*-\alpha_{l\cdot}^\circ\|\leq2\nu\|Y\alpha^*-\alpha^\circ\|.
$$
 By definition of $Y$ and feasibility of $\alpha^*$, there exists $\tilde{c}>0$ such that $\|Y\alpha^*-\alpha^\circ\|\leq \tilde{c}$.
 Hence for any nonzero vector $\alpha_{l\cdot}^*\in\mathbb{C}^{2l+1}$, we have
\begin{equation}\label{lowerbound:kkt2}
\|\alpha^*_{l\cdot}\|\geq\left(\frac{p\beta_{l}}{2\nu\tilde{c}}\right)^{\frac{1}{1-p}}.
\end{equation}
By definition of $\beta_{l}$, $l\in\mathbb{N}_0$,  we obtain that
 \begin{eqnarray*}
   \infty>\|\alpha^*\|_{2,p}^{p} &=&\sum\limits_{l=0}^{\infty}\beta_{l}\|\alpha^*_{l\cdot}\|^{p}\, =\sum\limits_{\{l\in\mathbb{N}_{0}:\,\|\alpha^*_{l\cdot}\|\neq0\}}\beta_{l}\|\alpha^*_{l\cdot}\|^{p}
    \geq\left(\frac{p}{2\nu\tilde{c}}\right)^{\frac{p}{1-p}}\sum\limits_{\{l\in\mathbb{N}_{0}:\,\|\alpha^*_{l\cdot}\|\neq0\}}(\eta^ll^p)^{\frac{1}{1-p}},
 \end{eqnarray*}
which implies that
$\{l\in\mathbb{N}_{0}:\|\alpha^*_{l\cdot}\|\neq0\}$ is a finite set.
Thus, the number of nonzero vectors $\alpha_{l\cdot}^* \in \mathbb{C}^{2l+1}$ of a scaled KKT point of problem (\ref{model2}) is finite and there exists an $L\in\mathbb{N}_{0}$ such that
$L=\max\{l\in\mathbb{N}_{0}:\|\alpha^*_{l\cdot}\|\neq0\}$.
Therefore, the infinite-dimensional problem (\ref{model2}) can be truncated to a finite-dimensional problem and approximated in a finite-dimensional space.

For notational simplicity, we truncate $\alpha\in\ell_{\beta}^{p}$ to $\alpha=(\alpha_{0,0},\alpha_{1\cdot}^T,\ldots,\alpha_{L\cdot}^T)^T\in\mathbb{C}^{d}$, where $d:=(L+1)^{2}$.
We use $\hat{\alpha}^\circ\in\mathbb{C}^{d}$ to denote the truncated vector whose elements are the first $d$ elements of $\alpha^\circ$ and $\hat{Y}\in\mathbb{C}^{d\times d}$
to denote the leading principal submatrix of order $d$ of $Y$.
Since $ \Gamma$ has an open subset, we have $z^H\hat{Y}z>0$ for any nonzero $z\in\mathbb{C}^{d}$.
Thus, the matrix $\hat{Y}$ is positive definite.

The truncated finite-dimensional problem of problem (\ref{model2}) has the following version
\begin{equation}\label{fini:cmodel}
  \begin{array}{cl}
    \min\limits_{\alpha\in\mathbb{C}^{d}} & \Phi(\alpha):=\sum_{l=0}^{L}\beta_{l}\|\alpha_{l\cdot}\|^p \vspace{1mm}\\
    {\rm{s.t.}} &  \alpha^{H}\hat{Y}\alpha-2\textrm{Re}(\alpha^{H}\hat{\alpha}^{\circ})+c\leq \varrho.
  \end{array}
\end{equation}
By the definition of $\varrho$, the feasible set of (\ref{fini:cmodel}) is nonempty and has an interior point.
The objective function $\Phi(\alpha)$ is continuous, nonnegative with $\Phi(0)=0$, and
 differentiable except at  points containing zero groups.
The penalty formulation for (\ref{fini:cmodel}) is
\begin{equation}\label{fini:penalty}
    \min\limits_{\alpha\in\mathbb{C}^{d}} \quad  F_{\lambda} (\alpha):= \Phi(\alpha)+\lambda(\alpha^{H}\hat{Y}\alpha-2\textrm{Re}(\alpha^{H}\hat{\alpha}^{\circ})+c-\varrho)_{+},
\end{equation}
for some $\lambda>0$, where $(\cdot)_{+}:=\max\{\cdot,0\}$.
For any nontrivial solution $\alpha^*$  of (\ref{fini:penalty}), we have
\begin{equation}\label{penalty:opti}
 0< F_{\lambda} (\alpha^*)=\min\limits_{\alpha\in\mathbb{C}^{d}} F_{\lambda} (\alpha)<F_{\lambda}(0)=\lambda(c-\varrho)_+=\lambda(c-\varrho)<\lambda c.
\end{equation}
In Sections 3-4, we focus on problems (\ref{fini:cmodel}) and (\ref{fini:penalty}).

\section{Exact penalization}
\label{sec:eactpenalization}
In this section, we consider the relationship between problems (\ref{fini:cmodel}) and (\ref{fini:penalty}).  We first give some notations.
For a closed set $S\subset\mathbb{C}^n$,  ${\rm{dist}}(z,S)=\inf_{z'\in S}\|z-z'\|$ denotes the distance from a point  $z\in\mathbb{C}^n$ to $S$ and $\mathbf{B}(b;r)=\{z\in \mathbb{C}^n:{{\|z-b\|}}\leq r\}$ denotes a closed ball with radius $r>0$ and center $b\in\mathbb{C}^n$.
Let
$g: \mathbb{C}^{d}\rightarrow\mathbb{R}$ be defined as
 $$g(\alpha):=\alpha^{H}\hat{Y}\alpha-2\textrm{Re}(\alpha^{H}\hat{\alpha}^{\circ})+c-\varrho,$$
  $S(\alpha):=\{\delta\in\mathbb{R}: g(\alpha)\leq\delta\}$ for $\alpha\in \mathbb{C}^{d}$,
and $S^{-1}(\delta):=\{\alpha\in \mathbb{C}^{d}: g(\alpha)\leq\delta\}$.  We denote the feasible set of problem (\ref{fini:cmodel})  by
  $\mathcal{F}_e:=\{\alpha\in \mathbb{C}^{d}: g(\alpha)\leq0\}.$  Let $\mathbb{L}:=\{0,1,\ldots, L\}$.

Since $\hat{Y}$ is positive definite, $g$ is strongly convex and
 has a unique global minimizer $\hat{Y}^{-1}\hat{\alpha}^{\circ}\neq\hat{\alpha}^{\circ}$
which implies that  $g(\hat{\alpha}^\circ)\in (\inf_{\alpha\in \mathbb{C}^{d}}g(\alpha),\infty).$
Since $g$ is strongly convex and quadratic, there is $C$ such that $\|\alpha-\bar{\alpha}\|\leq C\vert g(\alpha)-g(\bar{\alpha})\vert$ for $\alpha,\bar{\alpha}\in \mathbb{C}^{d}$.
Choosing $\bar{\alpha}$ such that $g(\bar{\alpha})=0$,
from \cite[Theorem 9.48]{rock}, we obtain the following lemma.
\begin{lemma}\label{dis}
There exists a constant $C>0$ such that for any $\alpha\in \mathbb{C}^{d}$,
$${\rm{dist}}(\alpha,\mathcal{F}_e){={\rm{dist}}(\alpha,S^{-1}(0))}\leq C{\rm{dist}}(0,S(\alpha))=C(g(\alpha))_+.$$
\end{lemma}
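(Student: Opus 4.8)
The plan is to read off the two equalities in the statement from the definitions and then prove the remaining inequality as a global linear error bound for the convex quadratic inequality $g(\alpha)\le 0$. Since $g$ is real-valued, $S(\alpha)=[g(\alpha),+\infty)$, so ${\rm dist}(0,S(\alpha))=(g(\alpha))_+$; and $\mathcal{F}_e=S^{-1}(0)$ by definition, so ${\rm dist}(\alpha,\mathcal{F}_e)={\rm dist}(\alpha,S^{-1}(0))$. It remains to exhibit $C>0$ with ${\rm dist}(\alpha,\mathcal{F}_e)\le C\,(g(\alpha))_+$ for every $\alpha\in\mathbb{C}^d$. I would do this directly from the explicit quadratic structure of $g$ rather than through an abstract metric-regularity citation.

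First I would complete the square: writing $m:=\hat Y^{-1}\hat\alpha^\circ$ and $\|z\|_{\hat Y}^2:=z^H\hat Y z$, one has $g(\alpha)=\|\alpha-m\|_{\hat Y}^2-r^2$ with $r^2:=\varrho-c+(\hat\alpha^\circ)^H\hat Y^{-1}\hat\alpha^\circ=-\min_{z\in\mathbb{C}^d} g(z)$. The interior-point (Slater) property of $\mathcal{F}_e$ recorded after (\ref{fini:penalty}) is exactly $\min_z g(z)<0$, hence $r>0$, and in the geometry induced by $\hat Y$ the set $\mathcal{F}_e$ is the ball of radius $r$ about $m$. For $\alpha$ with $g(\alpha)>0$ (equivalently $\|\alpha-m\|_{\hat Y}>r$), its $\hat Y$-projection onto $\mathcal{F}_e$ is $\bar\alpha:=m+r(\alpha-m)/\|\alpha-m\|_{\hat Y}$, with $\|\alpha-\bar\alpha\|_{\hat Y}=\|\alpha-m\|_{\hat Y}-r$. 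Converting to the Euclidean norm via $\|\cdot\|\le\lambda_{\min}(\hat Y)^{-1/2}\|\cdot\|_{\hat Y}$ and factoring $g(\alpha)=(\|\alpha-m\|_{\hat Y}-r)(\|\alpha-m\|_{\hat Y}+r)\ge r(\|\alpha-m\|_{\hat Y}-r)$ then gives
$${\rm dist}(\alpha,\mathcal{F}_e)\le\|\alpha-\bar\alpha\|\le\frac{\|\alpha-m\|_{\hat Y}-r}{\sqrt{\lambda_{\min}(\hat Y)}}\le\frac{g(\alpha)}{r\,\sqrt{\lambda_{\min}(\hat Y)}},$$
so the lemma holds with $C=\big(r\sqrt{\lambda_{\min}(\hat Y)}\big)^{-1}$, while the case $g(\alpha)\le 0$ is trivial because then $\alpha\in\mathcal{F}_e$. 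An alternative route is to invoke the Lipschitz continuity of the sublevel-set mapping $\delta\mapsto S^{-1}(\delta)$ near $\delta=0$ (\cite[Theorem 9.48]{rock}); this reaches the same conclusion but still needs the quadratic growth $g(\alpha)-\min g\ge\lambda_{\min}(\hat Y)\|\alpha-m\|^2$ to pass from a local estimate to the global one, so it essentially repackages the same calculation.

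The one point requiring care is that the error bound must be \emph{global}, i.e. valid for all $\alpha\in\mathbb{C}^d$ rather than only for $\alpha$ near $\mathcal{F}_e$: for a merely convex $g$ possessing a Slater point one obtains only a local bound (or a weaker $\max\{g,\sqrt{g}\}$-type global bound), and it is the strong convexity of $g$ — the positive definiteness of $\hat Y$, which follows from $\Gamma$ having an open subset — that permits replacing the product $(\|\alpha-m\|_{\hat Y}-r)(\|\alpha-m\|_{\hat Y}+r)$ by the single linear factor with the uniform constant $r$. I expect everything else to be routine manipulation of the completed square.
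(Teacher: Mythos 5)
Your proof is correct, and it takes a genuinely different route from the paper's. The paper disposes of the lemma in three lines: it notes that $g$ is strongly convex and quadratic, asserts a bound of the form $\|\alpha-\bar{\alpha}\|\leq C\vert g(\alpha)-g(\bar{\alpha})\vert$ with $\bar{\alpha}$ chosen on the zero level set, and cites \cite[Theorem 9.48]{rock} on Lipschitz behaviour of the level-set mapping $\delta\mapsto S^{-1}(\delta)$. (Read literally, the asserted inequality cannot hold for all pairs $\alpha,\bar{\alpha}$ — two distinct points on the same level set give a zero right-hand side — so the paper is really leaning on the cited theorem, and on the reader to supply the globalization.) You instead give a self-contained computation: completing the square turns $\mathcal{F}_e$ into the $\hat Y$-ball $\{\|\alpha-m\|_{\hat Y}\le r\}$ with $r>0$ guaranteed by the Slater point, the projection formula gives $\mathrm{dist}(\alpha,\mathcal{F}_e)\le\lambda_{\min}(\hat Y)^{-1/2}(\|\alpha-m\|_{\hat Y}-r)$, and the factorization $g(\alpha)=(\|\alpha-m\|_{\hat Y}-r)(\|\alpha-m\|_{\hat Y}+r)\ge r(\|\alpha-m\|_{\hat Y}-r)$ yields the explicit constant $C=(r\sqrt{\lambda_{\min}(\hat Y)})^{-1}$. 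What your approach buys is an explicit, verifiably global constant and independence from the external metric-regularity machinery; what it costs is generality — it exploits that $g$ is exactly a positive-definite quadratic, whereas the paper's citation-based argument is the template one would reuse for a merely convex constraint. Your closing remark correctly identifies the one genuine subtlety (local versus global error bounds for convex inequalities) and correctly attributes its resolution to the strong convexity coming from $\hat Y\succ0$.
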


 \begin{theorem}\label{pena:local} There exists a $\lambda^*>0$ such that a local minimizer $\alpha^*\in\mathbb{C}^{d}$ of problem (\ref{fini:cmodel})  is  a local minimizer of problem (\ref{fini:penalty}) whenever $\lambda\geq\lambda^*$.
 \end{theorem}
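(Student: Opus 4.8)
The plan is to establish an exact penalization result of the classical type (as in Clarke's exact penalty principle) by using the Lipschitz-type error bound from Lemma~\ref{dis} together with the fact that, although $\Phi$ is not globally Lipschitz, it \emph{is} locally Lipschitz near any nontrivial local minimizer $\alpha^*$ of~(\ref{fini:cmodel}), because such a point has no zero groups among the groups where $\Phi$ could be nondifferentiable — more precisely, $\Phi$ is locally Lipschitz at every point, including points with zero groups, except that its (one-sided) slope blows up there; what we actually need is only a local Lipschitz constant for $\Phi$ restricted to a neighborhood of $\alpha^*$, which exists since $\alpha^*\neq 0$ and we may take the neighborhood small enough to control the troublesome groups. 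First I would fix a nontrivial local minimizer $\alpha^*$ of~(\ref{fini:cmodel}) (the trivial case is excluded by Assumption~\ref{assu:inpa} together with~(\ref{penalty:opti})-type reasoning), and choose $r>0$ such that $\Phi$ restricted to $\mathbf{B}(\alpha^*;r)$ is Lipschitz with some constant $\kappa>0$ and such that $\alpha^*$ minimizes $\Phi$ over $\mathcal{F}_e\cap\mathbf{B}(\alpha^*;r)$.

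The core estimate: for $\alpha\in\mathbf{B}(\alpha^*;r/2)$, let $\bar\alpha$ be the projection of $\alpha$ onto $\mathcal{F}_e$ (which is closed, convex and nonempty, so the projection is unique and well-defined). By Lemma~\ref{dis}, $\|\alpha-\bar\alpha\| = {\rm{dist}}(\alpha,\mathcal{F}_e)\leq C(g(\alpha))_+$. One then checks that if $r$ is small enough then $\bar\alpha\in\mathbf{B}(\alpha^*;r)$ as well (since ${\rm{dist}}(\alpha,\mathcal{F}_e)\leq\|\alpha-\alpha^*\|\leq r/2$ because $\alpha^*\in\mathcal{F}_e$, so $\|\bar\alpha-\alpha^*\|\leq\|\bar\alpha-\alpha\|+\|\alpha-\alpha^*\|\leq r$), hence $\Phi(\bar\alpha)\geq\Phi(\alpha^*)$. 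Combining,
\begin{equation*}
F_\lambda(\alpha)=\Phi(\alpha)+\lambda(g(\alpha))_+\geq \Phi(\bar\alpha)-\kappa\|\alpha-\bar\alpha\|+\lambda(g(\alpha))_+\geq\Phi(\alpha^*)+(\lambda-\kappa C)(g(\alpha))_+.
\end{equation*}
Taking $\lambda^*:=\kappa C$, for any $\lambda\geq\lambda^*$ we get $F_\lambda(\alpha)\geq\Phi(\alpha^*)=F_\lambda(\alpha^*)$ for all $\alpha\in\mathbf{B}(\alpha^*;r/2)$ (using $g(\alpha^*)\leq 0$ so $F_\lambda(\alpha^*)=\Phi(\alpha^*)$), which is exactly the statement that $\alpha^*$ is a local minimizer of~(\ref{fini:penalty}).

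The main obstacle I anticipate is the non-Lipschitz behaviour of $\Phi$ at points with zero groups: one must argue carefully that a \emph{local} Lipschitz constant $\kappa$ for $\Phi$ on $\mathbf{B}(\alpha^*;r)$ genuinely exists. If $\alpha^*$ has all groups nonzero this is immediate from differentiability; if $\alpha^*$ has some zero group $\alpha^*_{l\cdot}=0$, then on a small ball around $\alpha^*$ the function $\alpha_{l\cdot}\mapsto\beta_l\|\alpha_{l\cdot}\|^p$ is \emph{not} Lipschitz near $0$, so the naive argument fails. The fix is to note that we only need the inequality $\Phi(\bar\alpha)-\Phi(\alpha)\le\kappa\|\bar\alpha-\alpha\|$ in \emph{one direction} and to handle the zero groups separately: for a group with $\alpha^*_{l\cdot}=0$ one uses instead the elementary bound $\big|\,\beta_l\|a\|^p-\beta_l\|b\|^p\,\big|\le\beta_l\|a-b\|^p$, which combined with ${\rm{dist}}(\alpha,\mathcal{F}_e)\le C(g(\alpha))_+$ and the fact that $(g(\alpha))_+\le (g(\alpha))_+^{p}$-type control near the boundary (or, more robustly, by invoking a sharper local error bound giving $\|\alpha-\bar\alpha\|$ small of the same order) still yields a term absorbed into $\lambda(g(\alpha))_+$ — this is the delicate point and may require shrinking $r$ and possibly strengthening the exponent bookkeeping, or alternatively appealing to the scaled-KKT lower bound~(\ref{lowerbound:kkt2}) to conclude that nontrivial local minimizers in fact have \emph{no} small nonzero groups and the relevant groups are bounded away from $0$, restoring local Lipschitzness on a suitable neighborhood. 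I would present the clean case first and then remark on the adjustment for zero groups.
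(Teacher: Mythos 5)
Your ``clean case'' (where $\alpha^*$ has no zero groups) is correct and is essentially Clarke's exact penalty principle combined with Lemma~\ref{dis}; the paper uses the same mechanism, via \cite[Lemma 3.1]{chen2016}, but only for the subvector $\alpha_\gamma$ indexed by the nonzero groups. The genuine gap is in your treatment of the zero groups $\tau=\{l:\alpha^*_{l\cdot}=0\}$, and neither of the two fixes you sketch closes it. First, replacing the Lipschitz bound by $\vert\,\|a\|^p-\|b\|^p\vert\le\|a-b\|^p$ produces a term of order $\|\alpha-\bar\alpha\|^p\le (C(g(\alpha))_+)^p$, and since $(g(\alpha))_+^p/(g(\alpha))_+\to\infty$ as $(g(\alpha))_+\downarrow 0$, no finite $\lambda$ can absorb such a term into $\lambda(g(\alpha))_+$; the inequality $(g(\alpha))_+\le (g(\alpha))_+^p$ near the boundary goes in the wrong direction for this purpose. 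Second, the lower bound (\ref{lowerbound:kkt2}) only says that the \emph{nonzero} groups of $\alpha^*$ are bounded away from zero; it does not prevent $\alpha^*$ from having zero groups (sparse minimizers are precisely the interesting case), and every neighborhood of such an $\alpha^*$ contains points whose $\tau$-components are tiny but nonzero, where $\Phi$ has unbounded slope. So local Lipschitzness of $\Phi$ near $\alpha^*$ is genuinely false when $\tau\neq\emptyset$, and your core estimate $\Phi(\alpha)\ge\Phi(\bar\alpha)-\kappa\|\alpha-\bar\alpha\|$ fails because the Euclidean projection $\bar\alpha$ onto $\mathcal{F}_e$ can move the $\tau$-components.

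The paper's resolution turns the non-Lipschitzness into an asset rather than an obstacle: it compares $\alpha=\alpha^*+y$ not with its projection onto $\mathcal{F}_e$ but with the point $[\alpha^*_\gamma+y_\gamma;0_\tau]$ obtained by zeroing out the $\tau$-components. Doing so changes the penalty term by at most $\tilde{\kappa}\|y_\tau\|$, where $\tilde{\kappa}$ is a Lipschitz constant of $\lambda(g(\cdot))_+$, while it removes $\sum_{l\in\tau}\beta_l\|y_{l\cdot}\|^p$ from $\Phi$; since $p<1$, one has $\sum_{l\in\tau}\beta_l\|y_{l\cdot}\|^p\ge\tilde{\kappa}\|y_\tau\|$ once $\|y_\tau\|$ is small enough (inequality (\ref{phibound})), so the concave penalty more than pays for the change in constraint violation. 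This reduces everything to the $\gamma$-restricted problem (\ref{model:gam}), on whose feasible region your clean Lipschitz-plus-error-bound argument does apply. If you wish to retain your projection-based scheme, you would have to project only the $\gamma$-block onto the feasible set of (\ref{model:gam}) and add this domination step for the $\tau$-block; as written, the proof does not go through for any local minimizer with a zero group.
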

\begin{proof}
Let $\alpha^*\in\mathbb{C}^{d}$ be a local minimizer of problem (\ref{fini:cmodel}){, that is there exists a neighborhood $\mathcal{N}$ of $\alpha^*$ such that $\Phi(\alpha^*)\leq\Phi(\alpha)$ for $\alpha\in\mathcal{N}\cap\mathcal{F}_e$}.
We denote the group support set of $\alpha^{*}$ by
$\gamma:=\{l\in\mathbb{L}: \|\alpha_{l\cdot}^{*}\|\neq0\}$, and the complement set of $\gamma$ in $\mathbb{L}$ by $\tau:=\{l\in\mathbb{L}: \|\alpha_{l\cdot}^{*}\|=0\}$.
Let $\alpha_{\gamma}$ and $\alpha_{\tau}$ denote the restrictions of $\alpha$ onto $\gamma$ and $\tau$, respectively.
We obtain that $\alpha^*_{\gamma}$ is a local minimizer of the following problem
\begin{equation}\label{model:gam}
  \begin{array}{cl}
    \min\limits_{\alpha_{\gamma}} &  \sum_{l\in\gamma}\beta_{l}\|\alpha_{l\cdot}\|^{p} \\
    {\rm{s.t.}} &  \alpha_{\gamma}^{H}\hat{Y}_{\gamma}\alpha_{\gamma}-2\textrm{Re}(\alpha_{\gamma}^{H}\hat{\alpha}_{\gamma}^{\circ})+c\leq \varrho.
  \end{array}
\end{equation}
Let $\bar{\epsilon}=\tfrac{1}{2}\min\{\|\alpha^*_{l\cdot}\|: l\in\gamma\}>0$. Then, there exists a small $\delta^*$ such that $\alpha^*_{\gamma}$ is a local minimizer of ({\ref{model:gam}}) and $\min\{\|\alpha_{l\cdot}\|: l\in\gamma\}>\bar{\epsilon}$ for all $\alpha_{\gamma}\in\mathbf{B}(\alpha^*_{\gamma};\delta^*)$. Let
$$g_\gamma(\alpha_\gamma)= \alpha_{\gamma}^{H}\hat{Y}_{\gamma}\alpha_{\gamma}-2{\textrm{Re}}(\alpha_{\gamma}^{H}\alpha_{\gamma}^{\circ})+c- \varrho \quad {\rm and} \quad
 \Omega_{1}:=\{\alpha_{\gamma}: g_\gamma(\alpha_\gamma)\leq0\}.$$
Let $[\alpha_\gamma; 0_\tau]$ be the vector with $([\alpha_\gamma; 0_\tau])_{l\cdot}=\alpha_{l\cdot}, l\in \gamma$
and $([\alpha_\gamma; 0_\tau])_{l\cdot}=0, l\in \tau.$   It is easy to see that $g([\alpha_\gamma; 0_\tau]) =g_\gamma(\alpha_\gamma)$ and ${\rm{dist}}(\alpha_{\gamma},\Omega_{1})={\rm{dist}}([\alpha_\gamma; 0_\tau],{\cal F}_e).$
 By Lemma \ref{dis},  ${\rm{dist}}(\alpha_{\gamma},\Omega_{1})\leq C(g_\gamma(\alpha_{\gamma}))_{+}$ for all $\alpha_{\gamma}\in\mathbf{B}(\alpha^*_{\gamma};\delta^*)$.

The objective function of  ({\ref{model:gam}}) is Lipschitz continuous on  $\mathbf{B}(\alpha^*_{\gamma};\delta^*)$. Then by \cite[Lemma 3.1]{chen2016}, there exists a $\lambda^*>0$ such that,  for  any $\lambda\geq\lambda^*$, $\alpha^*_{\gamma}$ is a local minimizer of the following problem
\begin{equation*}\label{model:pen:gam}
    \min\limits_{\alpha_{\gamma}}   F_{\lambda}^{\gamma}(\alpha_{\gamma}):= \sum_{l\in\gamma}\beta_{l}\|\alpha_{l\cdot}\|^{p}
 +\lambda(g_\gamma(\alpha_{\gamma}))_{+},
\end{equation*}
that is, there exists a neighborhood $U_{\gamma}$ of $0$ with $U_{\gamma}\subseteq \mathbf{B}(0;\delta^*)$ such that
\begin{equation*}
  F_{\lambda}^{\gamma}(\alpha_{\gamma})\geq F_{\lambda}^{\gamma}(\alpha^*_{\gamma}), \quad \forall  \, \alpha_{\gamma}\in\alpha_{\gamma}^*+U_{\gamma}.
\end{equation*}
We now show that $\alpha^*$ is a local minimizer of (\ref{fini:penalty}) with $\lambda\geq\lambda^*$.

For  fixed $\epsilon \in (0, \bar{\epsilon})$ and $\lambda\ge \lambda^*$, we consider problem (\ref{fini:penalty}) in the neighborhood $U:=U_{\gamma}\times (-\epsilon,\epsilon)^{d_\tau}$, where $d_\tau=\sum_{l \in \tau} (2l+1)$.
Let $\tilde{\kappa}$ be a Lipschitz constant of the function $\lambda (g(\alpha))_+$ over $\alpha^*+U$. There exists an $\epsilon_{0}\in(0,\epsilon)$ such that whenever $\|\alpha_{l\cdot}\|<\epsilon_{0}$, $l\in \tau$,
\begin{equation}\label{phibound}
\Phi(\alpha)
= \sum\limits_{l\in \gamma}\beta_l\|\alpha_{l\cdot}\|^{p} + \sum\limits_{l\in \tau}\beta_l\|\alpha_{l\cdot}\|^{p} \geq
\sum\limits_{l\in \gamma}\beta_l\|\alpha_{l\cdot}\|^{p} + \tilde{\kappa}\sum\limits_{l\in \tau}\|\alpha_{l\cdot}\|.
\end{equation}
Hence for any $y\in U_{\gamma}\times (-\epsilon_{0},\epsilon_{0})^{d_\tau}$, we have
\begin{eqnarray*}
  F_{\lambda}(\alpha^*+y) &=& \lambda (g(\alpha^*+y))_++\sum\limits_{l\in\gamma}\beta_{l}\| \alpha^*_{l\cdot}+y_{l\cdot}\|^{p}+\sum\limits_{l\in\tau}\beta_{l}\| y_{l\cdot}\|^{p}\\
   &\geq & \lambda (g( [\alpha_{\gamma}^*+y_{\gamma};0_\tau]))_+
                           -\tilde{\kappa}\|y_{\tau}\|+\sum\limits_{l\in\gamma}\beta_{l}\| \alpha^*_{l\cdot}+y_{l\cdot}\|^{p}+ \tilde{\kappa}\|y_{\tau}\|\\
    &\geq & F^{\gamma}_{\lambda}(\alpha_{\gamma}^*)=F_{\lambda}(\alpha^*),
\end{eqnarray*}
where the first inequality follows from the Lipschitz continuity of $g_{\lambda}$ with Lipschitz constant $\tilde{\kappa}$ and (\ref{phibound}), and the last inequality follows from the local
optimality of $\alpha_{\gamma}^*$.
Thus, $\alpha^*$ is a local minimizer of problem (\ref{fini:penalty}) with $\lambda\geq\lambda^*$. This completes the proof.
\end{proof}

\begin{theorem} Let $\tilde{\alpha}=\hat{Y}^{-1}\hat{\alpha}^{\circ}$, $\varepsilon>0$ and
$\lambda\geq C\bar{\beta}^{\frac{1}{p}}(\varepsilon(L+1)^{\frac{p}{2}-1})^{-\frac{1}{p}}\Phi(\tilde{\alpha})$,  where $C$ is defined in Lemma \ref{dis} and $\bar{\beta}=\eta^LL^p$. Then for any global minimizer $\alpha^*$ of problem (\ref{fini:penalty}), the projection $\alpha_{\varepsilon}:=\mathcal{P}_{\mathcal{F}_e}(\alpha^*)$ is an $\varepsilon$-minimizer of (\ref{fini:cmodel}),  that is, $\Phi(\alpha_{\varepsilon})\leq\min\{\Phi(\alpha):\alpha\in\mathcal{F}_e\}+\varepsilon$.
\end{theorem}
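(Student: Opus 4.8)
The plan is to play the global optimality of $\alpha^*$ for $F_\lambda$ against the explicit feasible point $\tilde\alpha=\hat{Y}^{-1}\hat\alpha^\circ$, convert the resulting bound on the constraint violation $(g(\alpha^*))_+$ into a bound on $\mathrm{dist}(\alpha^*,\mathcal{F}_e)$ by Lemma~\ref{dis}, and then control $\Phi(\alpha_\varepsilon)-\Phi(\alpha^*)$ by the H\"older continuity of $t\mapsto t^p$ on $[0,\infty)$.

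First I would note that $\tilde\alpha$ is the unique global minimizer of the strongly convex quadratic $g$, and since the feasible set $\mathcal{F}_e$ contains an interior point $\bar\alpha$ with $g(\bar\alpha)<0$, we have $g(\tilde\alpha)=\inf_{\alpha\in\mathbb{C}^d}g(\alpha)\le g(\bar\alpha)<0$; hence $\tilde\alpha\in\mathcal{F}_e$ and $F_\lambda(\tilde\alpha)=\Phi(\tilde\alpha)$. By global optimality of $\alpha^*$ for problem~(\ref{fini:penalty}),
\[
\Phi(\alpha^*)+\lambda\,(g(\alpha^*))_+ = F_\lambda(\alpha^*)\le F_\lambda(\tilde\alpha)=\Phi(\tilde\alpha).
\]
Since $\Phi\ge0$, this gives simultaneously $\Phi(\alpha^*)\le\Phi(\tilde\alpha)$ and, crucially, $(g(\alpha^*))_+\le\Phi(\tilde\alpha)/\lambda$. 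Testing $F_\lambda$ at an arbitrary $\alpha\in\mathcal{F}_e$ (where the penalty term vanishes) also yields $\Phi(\alpha^*)\le F_\lambda(\alpha^*)\le\Phi(\alpha)$, so $\Phi(\alpha^*)\le\min\{\Phi(\alpha):\alpha\in\mathcal{F}_e\}$.

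Next, since $\mathcal{F}_e$ is closed and convex, $\alpha_\varepsilon=\mathcal{P}_{\mathcal{F}_e}(\alpha^*)$ is well defined and $\|\alpha^*-\alpha_\varepsilon\|=\mathrm{dist}(\alpha^*,\mathcal{F}_e)$, so Lemma~\ref{dis} gives $\|\alpha^*-\alpha_\varepsilon\|\le C(g(\alpha^*))_+\le C\Phi(\tilde\alpha)/\lambda$. Using $|a^p-b^p|\le|a-b|^p$ for $a,b\ge0$, the groupwise bound $\big|\,\|\alpha_{\varepsilon,l\cdot}\|-\|\alpha^*_{l\cdot}\|\,\big|\le\|\alpha_{\varepsilon,l\cdot}-\alpha^*_{l\cdot}\|$, the estimate $\beta_l\le\bar\beta=\eta^LL^p$ for all $l\in\mathbb{L}$, and H\"older's inequality $\sum_{l=0}^L t_l^p\le(L+1)^{1-p/2}\big(\sum_{l=0}^L t_l^2\big)^{p/2}$, I would obtain
\[
\Phi(\alpha_\varepsilon)-\Phi(\alpha^*)\le\sum_{l=0}^L\beta_l\big|\,\|\alpha_{\varepsilon,l\cdot}\|^p-\|\alpha^*_{l\cdot}\|^p\,\big|\le\bar\beta\,(L+1)^{1-p/2}\|\alpha_\varepsilon-\alpha^*\|^p\le\bar\beta\,(L+1)^{1-p/2}\Big(\tfrac{C\Phi(\tilde\alpha)}{\lambda}\Big)^p.
\]
Solving $\bar\beta(L+1)^{1-p/2}(C\Phi(\tilde\alpha)/\lambda)^p\le\varepsilon$ for $\lambda$ shows this holds precisely when $\lambda\ge C\bar\beta^{1/p}(\varepsilon(L+1)^{p/2-1})^{-1/p}\Phi(\tilde\alpha)$, which is the hypothesis. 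Combining with $\Phi(\alpha^*)\le\min_{\mathcal{F}_e}\Phi$ and $\alpha_\varepsilon\in\mathcal{F}_e$ then yields $\Phi(\alpha_\varepsilon)\le\min\{\Phi(\alpha):\alpha\in\mathcal{F}_e\}+\varepsilon$.

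The only mildly delicate points are: checking that the penalty term at $\tilde\alpha$ genuinely vanishes, so that the constant involves $\Phi(\tilde\alpha)$ rather than $F_\lambda(\tilde\alpha)$; and bookkeeping the exponents, in particular that $(L+1)^{1-p/2}=(L+1)^{-(p/2-1)}$ is the reciprocal of the $(L+1)^{p/2-1}$ appearing in the statement. Everything else is the chain of H\"older-type inequalities above plus a single application of Lemma~\ref{dis}, so no essential obstacle is expected.
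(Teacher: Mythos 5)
Your proposal is correct and follows essentially the same route as the paper: test $F_\lambda$ at the feasible point $\tilde\alpha$ to bound $(g(\alpha^*))_+$ by $\Phi(\tilde\alpha)/\lambda$, convert to a distance bound via Lemma~\ref{dis}, and pass from $\Phi(\alpha_\varepsilon)-\Phi(\alpha^*)$ to $\|\alpha_\varepsilon-\alpha^*\|^p$ using the subadditivity of $t\mapsto t^p$ and the concavity/H\"older step giving the $(L+1)^{1-p/2}$ factor. The only cosmetic difference is that the paper invokes Lemma~2.4 of \cite{chen2016} where you use the elementary estimate $|a^p-b^p|\le|a-b|^p$ combined with the reverse triangle inequality, which amounts to the same thing.
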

\begin{proof}  Note that $g$ is a strongly convex function and $\tilde{\alpha}$ is a minimizer of $g$. Since the feasible set ${\cal F}_e$ of (\ref{fini:cmodel}) has an interior point, we have $\tilde{\alpha} \in {\rm int} {\cal F}_e$.

By the global optimality of $\alpha^*$, we have $F_{\lambda}(\alpha^*)\leq F_{\lambda}(\tilde{\alpha})$ and
\begin{equation}\label{inq1}
  ((\alpha^*)^H\hat{Y}\alpha^*-2{\rm{Re}}((\alpha^*)^H\hat{\alpha}^{\circ})+c-\varrho)_{+}\leq \tfrac{1}{\lambda} F_{\lambda}(\alpha^*)\leq \tfrac{1}{\lambda}F_{\lambda}(\tilde{\alpha})= \tfrac{1}{\lambda}\Phi(\tilde{\alpha}).
\end{equation}
Hence for any $\alpha\in\mathcal{F}_e$, we obtain
\begin{eqnarray*}
\Phi(\alpha_\varepsilon)-\Phi(\alpha)
&\leq&  \sum_{l=0}^{L}\beta_{l}(\|(\alpha_\varepsilon)_{l\cdot}\|^{p}-\|\alpha^*_{l\cdot}\|^{p})
        \leq  \sum\limits_{l=0}^{L}\beta_{l}\|(\alpha_\varepsilon)_{l\cdot}-\alpha^*_{l\cdot}\|^{p}
   \leq \bar{\beta} \sum\limits_{l=0}^{L}\left(\|(\alpha_\varepsilon)_{l\cdot}-\alpha^*_{l\cdot}\|^{2}\right)^{\frac{p}{2}} \\
& \leq& \bar{\beta} (L+1)\left(\tfrac{1}{L+1}\sum\limits_{l=0}^{L}\|(\alpha_\varepsilon)_{l\cdot}-\alpha^*_{l\cdot}\|^{2}\right)^{\frac{p}{2}}
   \le \bar{\beta}  (L+1)^{1-\frac{p}{2}}({\rm{dist}}(\alpha^*,\mathcal{F}_e))^p\\
&\leq&\bar{\beta} (L+1)^{1-\frac{p}{2}}(C((\alpha^*)^H\hat{Y}\alpha^*-2{\rm{Re}}((\alpha^*)^H\hat{\alpha}^{\circ})+c-\varrho)_+)^p\\
&\leq&\bar{\beta} (L+1)^{1-\frac{p}{2}}\left(\tfrac{C}{\lambda}\Phi(\tilde{\alpha})\right)^p
   \leq\varepsilon,
\end{eqnarray*}
where the first inequality is from the global optimality of $\alpha^*$, the second inequality is from Lemma 2.4 in \cite{chen2016}, the fifth inequality is from the concavity of function $t\rightarrow t^\frac{p}{2}$ for $t\geq0$, the sixth inequality is from Lemma \ref{dis}, the seventh inequality is from (\ref{inq1}) and the last inequality follows from  the choice of $\lambda$.
Thus, the projection $\mathcal{P}_{\mathcal{F}_e}(\alpha^*)$ is an $\varepsilon$-minimizer of (\ref{fini:cmodel}). This completes the proof.
\end{proof}

\section{Optimality conditions and a smoothing penalty algorithm}

In this section, we first define  first-order optimality conditions  of
(\ref{fini:cmodel}) and (\ref{fini:penalty}), which are necessary conditions for local optimality. We also derive lower bounds for the $\ell_2$ norm of nonzero groups of first-order stationary points of (\ref{fini:cmodel}) and (\ref{fini:penalty}). Next we propose a smoothing penalty algorithm for solving (\ref{fini:cmodel}) and prove its convergence to a first-order stationary point of
(\ref{fini:cmodel}).

\subsection{First-order optimality conditions}
   We first present a first-order optimality condition of   problem (\ref{fini:penalty}).
\begin{definition} We call  $\alpha^{*}\in\mathbb{C}^{d}$ a scaled first-order stationary point of problem (\ref{fini:penalty}) if
\begin{equation}\label{def:pena:first}
 p\beta_{l}\|\alpha^{*}_{l\cdot}\|^{p}\alpha^{*}_{l\cdot}+ 2\lambda \xi\|\alpha^{*}_{l\cdot}\|^{2} (\hat{Y}_{l\cdot}\alpha^{*}-\alpha^{\circ}_{l\cdot})=0,\quad \forall\,l\in\mathbb{L},
\end{equation}
for some $\xi$ satisfying $\xi\left\{\begin{array}{ll}
                  =0 & {\rm{if}} \,(\alpha^*)^{H}\hat{Y}\alpha^*-2{\rm{Re}}((\alpha^*)^{H}\hat{\alpha}^{\circ})+c<\varrho\\
                  \in[0,1] & {\rm{if}} \,(\alpha^*)^{H}\hat{Y}\alpha^*-2{\rm{Re}}((\alpha^*)^{H}\hat{\alpha}^{\circ})+c=\varrho\\
                  =1 & {\rm{otherwise}}.
                \end{array}\right.
$
\end{definition}
\begin{theorem}\label{pena:firstorder} Let
 $\alpha^{*}\in\mathbb{C}^{d}$ be a local minimizer of (\ref{fini:penalty}).
 Then  $\alpha^{*}$ is a scaled first-order stationary point of (\ref{fini:penalty}).
\end{theorem}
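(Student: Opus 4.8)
The plan is to pass to the group support of $\alpha^*$, which removes the points where $\Phi$ fails to be locally Lipschitz, and there split $F_\lambda$ into a smooth part plus a convex part, exactly as in the proof of Theorem \ref{pena:local}. Set $\gamma:=\{l\in\mathbb{L}:\|\alpha^*_{l\cdot}\|\neq0\}$ and $\tau:=\mathbb{L}\setminus\gamma$. For every $l\in\tau$ both terms of (\ref{def:pena:first}) vanish because $\alpha^*_{l\cdot}=0$, so there is nothing to prove at those indices; the content is at the indices $l\in\gamma$. If $\gamma=\emptyset$ then $\alpha^*=0$ and the claim holds trivially with $\xi=1$, since $g(0)=c-\varrho>0$ under the standing assumption $\|T^\circ\|^2_{L_2(\mathbb{S}^2)}>\varrho$. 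Hence assume $\gamma\neq\emptyset$; it remains to establish (\ref{def:pena:first}) for $l\in\gamma$.

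First I would restrict $F_\lambda$ to the affine subspace $\{\alpha:\alpha_\tau=0\}$. Since $F_\lambda([\alpha_\gamma;0_\tau])=F_{\lambda}^{\gamma}(\alpha_\gamma):=\sum_{l\in\gamma}\beta_l\|\alpha_{l\cdot}\|^p+\lambda(g_\gamma(\alpha_\gamma))_+$, with $g_\gamma(\alpha_\gamma)=\alpha_\gamma^H\hat{Y}_\gamma\alpha_\gamma-2\textrm{Re}(\alpha_\gamma^H\alpha^\circ_\gamma)+c-\varrho$ as in the proof of Theorem \ref{pena:local}, and since $[\alpha_\gamma;0_\tau]$ sweeps a neighborhood of $\alpha^*$ as $\alpha_\gamma$ sweeps a neighborhood of $\alpha^*_\gamma$, the point $\alpha^*_\gamma$ is a local minimizer of $F_{\lambda}^{\gamma}$. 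In a small enough ball around $\alpha^*_\gamma$ each $\|\alpha_{l\cdot}\|$ ($l\in\gamma$) stays bounded away from $0$, so $\Phi_\gamma(\alpha_\gamma):=\sum_{l\in\gamma}\beta_l\|\alpha_{l\cdot}\|^p$ is continuously differentiable there; and because $\hat{Y}_\gamma$ is a principal submatrix of the positive definite matrix $\hat{Y}$, it is positive definite, so $g_\gamma$ is convex and hence $(g_\gamma)_+$ is convex. Thus near $\alpha^*_\gamma$, $F_{\lambda}^{\gamma}$ is the sum of a $C^1$ function and a convex function.

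Next I would apply the first-order necessary condition for a local minimizer of such a sum (Fermat's rule together with the subdifferential sum rule), working throughout with the Wirtinger calculus of Appendix B, i.e.\ identifying $\mathbb{C}^{n}$ with $\mathbb{R}^{2n}$: $0\in\nabla\Phi_\gamma(\alpha^*_\gamma)+\lambda\,\partial\big((g_\gamma)_+\big)(\alpha^*_\gamma)$. The convex chain rule applied to the composition of the differentiable $g_\gamma$ with $(\cdot)_+$ gives $\partial\big((g_\gamma)_+\big)(\alpha^*_\gamma)=\{\xi\,\nabla g_\gamma(\alpha^*_\gamma):\xi\in\partial(\cdot)_+(g_\gamma(\alpha^*_\gamma))\}$, where $\partial(\cdot)_+(s)$ equals $\{0\}$, $[0,1]$, $\{1\}$ according as $s<0$, $s=0$, $s>0$; since $g_\gamma(\alpha^*_\gamma)=g(\alpha^*)=(\alpha^*)^H\hat{Y}\alpha^*-2\textrm{Re}((\alpha^*)^H\hat{\alpha}^\circ)+c-\varrho$, this is exactly the complementarity relation on $\xi$ stated in the definition. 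Computing the Wirtinger derivatives with respect to $\bar\alpha_{l\cdot}$ yields $\nabla_{\bar\alpha_{l\cdot}}\Phi_\gamma(\alpha^*_\gamma)=\tfrac{p}{2}\beta_l\|\alpha^*_{l\cdot}\|^{p-2}\alpha^*_{l\cdot}$ and $\nabla_{\bar\alpha_{l\cdot}}g_\gamma(\alpha^*_\gamma)=\hat{Y}_{l\cdot}\alpha^*-\alpha^\circ_{l\cdot}$ (using $\alpha^*_\tau=0$ to replace $\hat{Y}_\gamma\alpha^*_\gamma$ by $\hat{Y}\alpha^*$ block-wise), so for each $l\in\gamma$ there is $\xi$ with $\tfrac{p}{2}\beta_l\|\alpha^*_{l\cdot}\|^{p-2}\alpha^*_{l\cdot}+\lambda\xi(\hat{Y}_{l\cdot}\alpha^*-\alpha^\circ_{l\cdot})=0$. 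Multiplying through by $2\|\alpha^*_{l\cdot}\|^2$ gives (\ref{def:pena:first}) for $l\in\gamma$, and together with the trivial identity for $l\in\tau$ this finishes the proof.

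The only real care is in the bookkeeping of the Wirtinger calculus for these real-valued functions of complex variables, and in correctly invoking the convex subdifferential sum and chain rules for the nonsmooth term $\lambda(g_\gamma)_+$; once the reduction to the support $\gamma$ has removed the non-Lipschitz behavior of $\Phi$ at zero groups (the same device as in Theorem \ref{pena:local}), I do not expect any genuinely difficult step.
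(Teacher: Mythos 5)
Your proposal is correct and follows essentially the same route as the paper: restrict to the group support $\gamma$ (where $\Phi$ is $C^1$), invoke the first-order necessary condition for the reduced problem with the subdifferential of $(\cdot)_+$ supplying the multiplier $\xi$, multiply through by $\|\alpha^*_{l\cdot}\|^2$, and observe that the condition holds trivially on $\tau$. The only differences are cosmetic — you spell out the Fermat/sum/chain-rule justification and the $\gamma=\emptyset$ edge case, which the paper leaves implicit.
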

\begin{proof}
Let $\alpha^{*}\in\mathbb{C}^{d}$ be a local minimizer of (\ref{fini:penalty}).
We  obtain that $\alpha^{*}_{\gamma}$ is a local minimizer of
\begin{equation}\label{fini:gammapenalty}
    \min\limits_{\alpha_{\gamma}} \quad  \sum\limits_{l\in\gamma}\beta_{l}\|\alpha_{l\cdot}\|^{p}+\lambda(\alpha_{\gamma}^{H}\hat{Y}_{\gamma}\alpha_{\gamma}-2\textrm{Re}(\alpha_{\gamma}^{H}\hat{\alpha}_{\gamma}^{\circ})+c-\varrho)_{+}.
\end{equation}
Note that $\|\alpha^{*}_{l\cdot}\|\neq0$, $l\in\gamma$, the first term of the objective function of (\ref{fini:gammapenalty}) is continuously differentiable at $\alpha^{*}_{\gamma}$.
The first-order necessary optimality condition for problem (\ref{fini:gammapenalty}) holds at $\alpha^{*}_{\gamma}$, that is,
\begin{equation}\label{pf:first}
  p\beta_{l}\|\alpha^{*}_{l\cdot}\|^{p-2}\alpha^{*}_{l\cdot}+2\lambda \xi ((\hat{Y}_{\gamma}\alpha_{\gamma}^{*})_{l\cdot}-\alpha^{\circ}_{l\cdot})=0,\quad \forall\,l\in\gamma,
\end{equation}
for some $\xi$ satisfying $\xi\left\{\begin{array}{ll}
                  =0 & {\rm{if}} \,(\alpha_{\gamma}^*)^{H}\hat{Y}_{\gamma}\alpha_{\gamma}^*-2\textrm{Re}((\alpha_{\gamma}^*)^{H}\hat{\alpha}_{\gamma}^{\circ})+c<\varrho\\
                  \in[0,1] & {\rm{if}} \,(\alpha_{\gamma}^*)^{H}\hat{Y}_{\gamma}\alpha_{\gamma}^*-2\textrm{Re}((\alpha_{\gamma}^*)^{H}\hat{\alpha}_{\gamma}^{\circ})+c=\varrho\\
                  =1 & {\rm{otherwise}}.
                \end{array}\right.
$

Multiplying $\|\alpha^{*}_{l\cdot}\|^{2}$ on both sides of (\ref{pf:first}), we obtain
\begin{equation*}
  p\beta_{l}\|\alpha^{*}_{l\cdot}\|^{p}\alpha^{*}_{l\cdot}+2\lambda \xi\|\alpha^{*}_{l\cdot}\|^{2} ((\hat{Y}_{\gamma}\alpha_\gamma^{*})_{l\cdot}-\alpha^{\circ}_{l\cdot})=0,\quad \forall\,l\in\gamma.
\end{equation*}
Since $\|\alpha^{*}_{l\cdot}\|=0$ for $l\in\tau$,  we have
\begin{equation*}
  p\beta_{l}\|\alpha^{*}_{l\cdot}\|^{p}\alpha^{*}_{l\cdot}+2\lambda \xi\|\alpha^{*}_{l\cdot}\|^{2} (\hat{Y}_{l\cdot}\alpha^{*}-\alpha^{\circ}_{l\cdot})=0,\quad \forall\,l\in\mathbb{L}.
\end{equation*}
Hence
 (\ref{def:pena:first}) holds at $\alpha^{*}$.
\end{proof}

The next theorem gives a lower bound for the $l_{2}$ norm of  nonzero groups of stationary points of (\ref{fini:penalty}).
\begin{theorem}\label{pena:lowerbound}Let
 $\alpha^{*}\in\mathbb{C}^{d}$ be a  scaled first-order stationary point of (\ref{fini:penalty}) and $\|\hat{Y}\alpha^{*}-\hat{\alpha}^\circ\|\leq \tilde{c}$ for some $\tilde{c}>0$.
 Then,
 \begin{equation*}\label{lowerbound:unfirst}
   \|\alpha^{*}_{l\cdot}\|\geq\left(\frac{p\beta_{l}}{2\lambda \tilde{c}}\right)^{\frac{1}{1-p}}, \forall\,l\in\gamma.
 \end{equation*}
\end{theorem}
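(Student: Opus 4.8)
The plan is to mimic the lower-bound derivation already carried out for scaled KKT points of the infinite-dimensional problem (the computation leading to \eqref{lowerbound:kkt2}), now applied to a scaled first-order stationary point $\alpha^*$ of the penalty problem \eqref{fini:penalty}. First I would fix an index $l\in\gamma$, so that $\|\alpha^*_{l\cdot}\|\neq 0$, and divide the defining relation \eqref{def:pena:first} by $\|\alpha^*_{l\cdot}\|^{2}$, which is legitimate precisely because $l\in\gamma$. This yields
\begin{equation*}
 p\beta_{l}\|\alpha^{*}_{l\cdot}\|^{p-2}\alpha^{*}_{l\cdot}+ 2\lambda \xi (\hat{Y}_{l\cdot}\alpha^{*}-\alpha^{\circ}_{l\cdot})=0,
\end{equation*}
so that $p\beta_{l}\|\alpha^{*}_{l\cdot}\|^{p-1} = 2\lambda\xi\,\|\hat{Y}_{l\cdot}\alpha^{*}-\alpha^{\circ}_{l\cdot}\|$.

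Next I would bound the right-hand side from above. Since $\xi\in[0,1]$ in all three cases of the definition, $\xi\le 1$; and $\|\hat{Y}_{l\cdot}\alpha^{*}-\alpha^{\circ}_{l\cdot}\|\le\|\hat{Y}\alpha^{*}-\hat{\alpha}^{\circ}\|\le\tilde{c}$ by hypothesis, because the $l$-th block of the vector $\hat{Y}\alpha^{*}-\hat{\alpha}^{\circ}$ has norm at most the norm of the whole vector. Combining these gives $p\beta_{l}\|\alpha^{*}_{l\cdot}\|^{p-1}\le 2\lambda\tilde{c}$. Since $0<p<1$, the exponent $p-1$ is negative, so rearranging reverses the inequality and yields $\|\alpha^{*}_{l\cdot}\|^{1-p}\ge p\beta_{l}/(2\lambda\tilde{c})$, hence
\begin{equation*}
 \|\alpha^{*}_{l\cdot}\|\ge\left(\frac{p\beta_{l}}{2\lambda\tilde{c}}\right)^{\frac{1}{1-p}},\qquad\forall\,l\in\gamma,
\end{equation*}
which is the claim. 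One should note in passing that when $l\in\gamma$ the relation forces $\xi>0$, hence $\lambda\xi>0$, so no division-by-zero issue arises on the right either; but this is not strictly needed for the bound.

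There is essentially no hard part here — the argument is a two-line manipulation once the stationarity condition is in hand; the only things to be careful about are that $\xi\le 1$ (which is why the penalty formulation, as opposed to the constrained one, still gives a clean bound with $\lambda$ in place of the multiplier $\nu$) and that dividing by $\|\alpha^*_{l\cdot}\|^{2}$ is valid only on the support $\gamma$. I would present it compactly in that order: restrict to $l\in\gamma$, divide, take norms, use $\xi\le1$ and the $\tilde c$-bound, then invert using $0<p<1$.
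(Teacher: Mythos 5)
Your proof is correct and is exactly the argument the paper intends: the paper omits the proof, referring to the derivation of the bound \eqref{lowerbound:kkt2}, which is precisely your computation (divide \eqref{def:pena:first} by $\|\alpha^*_{l\cdot}\|^{2}$ on the support, take norms, bound the block norm by $\|\hat{Y}\alpha^*-\hat{\alpha}^\circ\|\le\tilde c$, and invert using $p-1<0$). Your added observation that $\xi\le 1$ is what lets $\lambda$ replace the multiplier in the denominator is the one genuinely new point needed beyond the earlier derivation, and you handle it correctly.
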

The proof is similar with that of (\ref{lowerbound:kkt2}), and thus we omit it here.
According to Theorems \ref{pena:local} and \ref{pena:firstorder}, Theorem \ref{pena:lowerbound} gives a lower bound for the $\ell_{2}$ norm of nonzero groups of local minimizers of problem (\ref{fini:cmodel}).

Next, we consider the first-order optimality condition of problem (\ref{fini:cmodel}).
Similar to  Definition \ref{def:kkt}, we call $\alpha^*$ is a scaled first-order stationary point or a scaled KKT point of (\ref{fini:cmodel}) if the following conditions hold,
\begin{equation}\label{finite:kkt}
\begin{split}
p\beta_{l}\|\alpha^*_{l\cdot}\|^{p}\alpha^*_{l\cdot}+2\nu\|\alpha^*_{l\cdot}\|^{2}(\hat{Y}_{l\cdot}\alpha^*-\alpha_{l\cdot}^\circ) = 0, & \,\,\,  \forall\,l\in\mathbb{L},\\
 \nu g(\alpha^*)=0,\quad
 g(\alpha^*)\leq 0, \quad
 \nu\geq0.&
\end{split}
\end{equation}
Since $\alpha=0$ is not a feasible point, thus $\nu\neq0$. Hence,  replacing $\lambda$ by $\nu$ in Theorem \ref{pena:lowerbound}, we can obtain a lower bound for the $\ell_2$ norm of nonzero groups of the scaled KKT point of problem (\ref{fini:cmodel}).

\begin{theorem}\label{them:44}
Let $\alpha^*$ be a local minimizer of problem (\ref{fini:cmodel}). If  there exists $l\in\mathbb{L}$ such that
$\|\alpha_{l\cdot}^*\|(Y_{l\cdot}\alpha^*-\alpha_{l\cdot}^\circ)\neq0$,
 then $\alpha^*$ is a scaled KKT point of (\ref{fini:cmodel}).
\end{theorem}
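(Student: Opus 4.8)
The plan is to restrict attention to the group support of $\alpha^*$, on which the objective $\Phi$ becomes continuously differentiable, and then to invoke the classical first-order (KKT) necessary conditions for the resulting smooth, convexly constrained problem, using the hypothesis to supply a constraint qualification.

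First I would introduce $\gamma:=\{l\in\mathbb{L}:\|\alpha^*_{l\cdot}\|\neq0\}$ and $\tau:=\mathbb{L}\setminus\gamma$. Since $\alpha=0$ is infeasible for (\ref{fini:cmodel}) by Assumption \ref{assu:inpa}, while $\alpha^*$ is feasible, the set $\gamma$ is nonempty. Arguing exactly as in the proof of Theorem \ref{pena:local} — using $g([\alpha_\gamma;0_\tau])=g_\gamma(\alpha_\gamma)$ and $\Phi([\alpha_\gamma;0_\tau])=\sum_{l\in\gamma}\beta_l\|\alpha_{l\cdot}\|^p$ for $\alpha_\gamma$ close to $\alpha^*_\gamma$ — I would conclude that $\alpha^*_\gamma$ is a local minimizer of the restricted problem (\ref{model:gam}). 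On a small ball around $\alpha^*_\gamma$, every $\|\alpha_{l\cdot}\|$ with $l\in\gamma$ stays bounded away from $0$, so there $\phi_\gamma(\alpha_\gamma):=\sum_{l\in\gamma}\beta_l(\|\alpha_{l\cdot}\|^2)^{p/2}$ is continuously differentiable with Wirtinger gradient block $\tfrac p2\beta_l\|\alpha_{l\cdot}\|^{p-2}\alpha_{l\cdot}$, while $g_\gamma$ is quadratic and strongly convex (its Hessian $\hat Y_\gamma$ is a principal submatrix of the positive definite $\hat Y$), with Wirtinger gradient $\hat Y_\gamma\alpha_\gamma-\hat\alpha^\circ_\gamma$.

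The main step, and the one I expect to be the crux, is the constraint qualification. Being strongly convex, $g_\gamma$ has a unique minimizer, and feasibility of $\alpha^*_\gamma$ gives $\min_{\alpha_\gamma}g_\gamma(\alpha_\gamma)\le g_\gamma(\alpha^*_\gamma)\le0$. I would rule out the degenerate case $\min_{\alpha_\gamma}g_\gamma(\alpha_\gamma)=0$: in that case $\alpha^*_\gamma$ is the minimizer of $g_\gamma$, hence $\nabla g_\gamma(\alpha^*_\gamma)=\hat Y_\gamma\alpha^*_\gamma-\hat\alpha^\circ_\gamma=0$; since $\alpha^*$ is supported on $\gamma$ this forces $Y_{l\cdot}\alpha^*-\alpha^\circ_{l\cdot}=\hat Y_{l\cdot}\alpha^*-\alpha^\circ_{l\cdot}=0$ for all $l\in\gamma$, so $\|\alpha^*_{l\cdot}\|(Y_{l\cdot}\alpha^*-\alpha^\circ_{l\cdot})=0$ for every $l\in\mathbb{L}$ (it is trivially $0$ for $l\in\tau$), contradicting the hypothesis. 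Hence $\min_{\alpha_\gamma}g_\gamma(\alpha_\gamma)<0$, i.e. (\ref{model:gam}) satisfies Slater's condition; since its single constraint is convex, MFCQ holds at $\alpha^*_\gamma$ and the KKT conditions are necessary for local optimality there.

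It then remains to translate the KKT conditions of (\ref{model:gam}) back to $\alpha^*$. They give a multiplier $\nu\ge0$ with $\nabla\phi_\gamma(\alpha^*_\gamma)+\nu\nabla g_\gamma(\alpha^*_\gamma)=0$ and $\nu g_\gamma(\alpha^*_\gamma)=0$; blockwise the stationarity reads $\tfrac p2\beta_l\|\alpha^*_{l\cdot}\|^{p-2}\alpha^*_{l\cdot}+\nu(\hat Y_{l\cdot}\alpha^*-\alpha^\circ_{l\cdot})=0$ for $l\in\gamma$. Multiplying the $l$-th equation by $2\|\alpha^*_{l\cdot}\|^2>0$ produces exactly the first line of (\ref{finite:kkt}) for $l\in\gamma$, and for $l\in\tau$ that equation holds trivially since $\|\alpha^*_{l\cdot}\|=0$, so it holds for all $l\in\mathbb{L}$. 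Combining this with $g(\alpha^*)=g_\gamma(\alpha^*_\gamma)\le0$, $\nu g(\alpha^*)=\nu g_\gamma(\alpha^*_\gamma)=0$, and $\nu\ge0$, all the conditions in (\ref{finite:kkt}) are verified, so $\alpha^*$ is a scaled KKT point of (\ref{fini:cmodel}); one also gets $\nu>0$ for free, since $\nabla\phi_\gamma(\alpha^*_\gamma)\neq0$ because $\gamma\neq\emptyset$. The only delicate point in the whole argument is the one flagged above — showing the hypothesis is precisely what prevents $\alpha^*_\gamma$ from being simultaneously the unconstrained minimizer of $g_\gamma$ and a boundary point of the feasible set, the situation in which the multiplier rule could fail; the rest is routine Wirtinger calculus and bookkeeping between the index sets $\gamma$, $\tau$ and the matrices $Y$, $\hat Y$.
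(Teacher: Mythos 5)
Your proposal is correct and follows essentially the same route as the paper: restrict to the group support $\gamma$ where $\Phi$ is smooth, use the hypothesis to secure a constraint qualification for the restricted problem (\ref{model:gam}), write down its KKT conditions, multiply the stationarity equation by $\|\alpha^*_{l\cdot}\|^2$, and extend trivially to $\tau$. The only cosmetic difference is that you reach the constraint qualification via Slater/MFCQ and strong convexity of $g_\gamma$, whereas the paper observes directly that the hypothesis forces $\hat Y_\gamma\alpha^*_\gamma-\hat\alpha^\circ_\gamma\neq0$ and invokes LICQ; for a single inequality constraint these are the same condition.
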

\begin{proof}
Let $\alpha^*$ be a local minimizer of problem (\ref{fini:cmodel}), then $\alpha_{\gamma}^*$ is a local minimizer of problem (\ref{model:gam}).
Recall  $g_{\gamma}(\alpha_{\gamma})=\alpha_{\gamma}^{H}\hat{Y}_{\gamma}\alpha_{\gamma}-2\textrm{Re}(\alpha_{\gamma}^{H}\hat{\alpha}_{\gamma}^{\circ})+c-\varrho$,
then $g_\gamma(\alpha^*_\gamma)=g(\alpha^*)$.

The condition that  there exists $l\in\mathbb{L}$ such that
$\|\alpha_{l\cdot}^*\|(Y_{l\cdot}\alpha^*-\alpha_{l\cdot}^\circ)\neq0$
implies that $\hat{Y}_{\gamma}\alpha^*_{\gamma}-\hat{\alpha}_{\gamma}^\circ\neq0$ if $g_\gamma(\alpha_\gamma^*)=0$.
Thus the LICQ (linear independence constraint qualification) holds at $\alpha_\gamma^*$ for problem (\ref{model:gam}).

Note that, the objective function of problem (\ref{model:gam}) and $g_{\gamma}$ are continuously differentiable at $\alpha_{\gamma}^*$ and $\partial_{\bar{\alpha}_{\gamma}}g_{\gamma}(\alpha_{\gamma}^*)=\hat{Y}_{\gamma}\alpha_{\gamma}^*-\hat{\alpha}_{\gamma}^\circ.$
Hence  $\alpha_{\gamma}^*$ is a KKT point of (\ref{model:gam}), that is,
there exists $\nu$ such that
\begin{equation}\label{optimality:kkt}
\begin{split}
p\beta_{l}\|\alpha^*_{l\cdot}\|^{p-2}\alpha^*_{l\cdot}+2\nu(\hat{Y}_\gamma\alpha^*_\gamma-\alpha_{\gamma}^\circ)_{l\cdot} = 0, & \,\,\,  \forall\,l\in\gamma,\\
 \nu g_{\gamma}(\alpha_{\gamma}^*)=0,\quad
 g_{\gamma}(\alpha_{\gamma}^*)\leq 0, \quad
 \nu\geq0.&
\end{split}
\end{equation}
Multiplying $\|\alpha^{*}_{l\cdot}\|^{2}$ on both sides of the first equality in (\ref{optimality:kkt}), we obtain
$$p\beta_{l}\|\alpha^*_{l\cdot}\|^{p}\alpha^*_{l\cdot}+2\nu\|\alpha^*_{l\cdot}\|^{2}(\hat{Y}_\gamma\alpha^*_\gamma-\alpha_{\gamma}^\circ)_{l\cdot} = 0,\quad  \forall\,l\in\gamma.$$
Since $\alpha^*_{l\cdot}=0$ for $l\in \tau$, we obtain
$$p\beta_{l}\|\alpha^*_{l\cdot}\|^{p}\alpha^*_{l\cdot}+2\nu\|\alpha^*_{l\cdot}\|^{2}(\hat{Y}_{l\cdot}\alpha^*-\alpha_{l\cdot}^\circ) = 0,\quad  \forall\,l\in\mathbb{L}.$$
Combining this with (\ref{optimality:kkt}) and $g(\alpha^*)=g_\gamma(\alpha_\gamma^*)$, we find that
 $\alpha^*$ is a scaled KKT point of (\ref{fini:cmodel}). The proof is completed.
\end{proof}

From the definitions we can see that for some $\lambda>0$, any scaled KKT point of problem (\ref{fini:cmodel}) is a scaled first order stationary point of (\ref{fini:penalty}). Moreover, any
scaled first order stationary point of problem (\ref{fini:penalty}) which belongs to the feasible set  $\mathcal{F}_e$ is a scaled KKT point of problem (\ref{fini:cmodel}).

Following the proof of Theorem \ref{them:44}, we can show the existence of a KKT point of problem (\ref{model2}) by replacing $\mathbb{L}$ by $\mathbb{N}_0$.
\begin{corollary}
Let $\tilde{\alpha}$ be a local minimizer of problem (\ref{model2}). If  there exists $l\in\mathbb{\mathbb{N}}_0$ such that
$\|\tilde{\alpha}_{l\cdot}\|(Y_{l\cdot}\tilde{\alpha}-\alpha_{l\cdot}^\circ)\neq0$,
 then $\tilde{\alpha}$ is a scaled KKT point of (\ref{model2}).
\end{corollary}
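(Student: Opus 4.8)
The plan is to realize the paper's suggestion of ``following the proof of Theorem \ref{them:44} with $\mathbb{L}$ replaced by $\mathbb{N}_0$'', the one genuinely new issue being that the group support $\gamma:=\{l\in\mathbb{N}_0:\|\tilde\alpha_{l\cdot}\|\neq0\}$ of the local minimizer $\tilde\alpha$ need not be finite a priori. To avoid invoking an infinite-dimensional multiplier rule on the slice indexed by $\gamma$ (on which, moreover, the weighted $\ell_{2,p}^p$ objective need not be differentiable), I would restrict $\tilde\alpha$ only onto \emph{finite} sub-blocks and check that the multipliers so produced coincide. First note that $\tilde\alpha$ is feasible and, by Assumption \ref{assu:inpa}, $\tilde\alpha\neq0$, so $\gamma\neq\emptyset$, and by hypothesis we may fix $l^*\in\gamma$ with $Y_{l^*\cdot}\tilde\alpha-\alpha^\circ_{l^*\cdot}\neq0$. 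Since $\tilde\alpha\in\ell^p_\beta\subset\ell^2$, since $\alpha^\circ\in\ell^2$, and since $Y$ is a bounded Hermitian operator on $\ell^2$ with $0\preceq Y\preceq I$ (because $\alpha^H Y\alpha=\int_\Gamma|\sum_{l,m}\alpha_{l,m}Y_{l,m}|^2\,d\sigma$ by (\ref{constraint})), each block $Y_{l\cdot}\tilde\alpha-\alpha^\circ_{l\cdot}\in\mathbb{C}^{2l+1}$ and the constraint function $g(\alpha):=\alpha^H Y\alpha-2\textrm{Re}(\alpha^H\alpha^\circ)+c$ are well defined.

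The first step produces the group-wise first-order equations with a common multiplier. Fix a finite $F\subseteq\gamma$ with $l^*\in F$. Freezing $\alpha_{l\cdot}=\tilde\alpha_{l\cdot}$ for $l\notin F$ and letting the $F$-blocks vary, $\tilde\alpha_F$ is a local minimizer of the finite-dimensional problem of minimizing $\sum_{l\in F}\beta_l\|\alpha_{l\cdot}\|^p$ subject to $g(\cdot)\le\varrho$: this affine slice through $\tilde\alpha$ stays inside $\ell^p_\beta$, and a small move within it is a small move of $\tilde\alpha$, so local optimality is inherited. On the slice the constraint function is a convex quadratic in $\alpha_F$, and the objective is continuously differentiable at $\tilde\alpha_F$ since $\|\tilde\alpha_{l\cdot}\|>0$ for every $l\in F$. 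The constraint must be active at $\tilde\alpha_F$, for otherwise $\tilde\alpha_F$ would be an unconstrained local minimizer and the $l^*$-block of the stationarity condition would give $p\beta_{l^*}\|\tilde\alpha_{l^*\cdot}\|^{p-2}\tilde\alpha_{l^*\cdot}=0$, contradicting $\tilde\alpha_{l^*\cdot}\neq0$. With the constraint active, its Wirtinger gradient at $\tilde\alpha_F$ has $l^*$-block $Y_{l^*\cdot}\tilde\alpha-\alpha^\circ_{l^*\cdot}\neq0$, hence is nonzero, so LICQ holds; the KKT conditions (via the Wirtinger calculus of Appendix B, exactly as in the proof of Theorem \ref{them:44}) then yield some $\nu_F\ge0$ with $p\beta_l\|\tilde\alpha_{l\cdot}\|^{p-2}\tilde\alpha_{l\cdot}+2\nu_F(Y_{l\cdot}\tilde\alpha-\alpha^\circ_{l\cdot})=0$ for all $l\in F$ and with $g(\tilde\alpha)=\varrho$. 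Taking norms in the $l^*$-block forces $\nu_F=p\beta_{l^*}\|\tilde\alpha_{l^*\cdot}\|^{p-1}/(2\|Y_{l^*\cdot}\tilde\alpha-\alpha^\circ_{l^*\cdot}\|)>0$, which is independent of $F$; call this number $\nu$.

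The second step assembles the scaled KKT system of Definition \ref{def:kkt}. For an arbitrary $l\in\gamma$, the first step applied to $F=\{l,l^*\}$ gives $p\beta_l\|\tilde\alpha_{l\cdot}\|^{p-2}\tilde\alpha_{l\cdot}+2\nu(Y_{l\cdot}\tilde\alpha-\alpha^\circ_{l\cdot})=0$; multiplying through by $\|\tilde\alpha_{l\cdot}\|^2$ gives the first relation of (\ref{def:infi:kkt}) for this $l$, and for $l\in\mathbb{N}_0\setminus\gamma$ that relation is trivial because both terms vanish. Combined with $g(\tilde\alpha)=\varrho$ (which yields feasibility and the complementarity $\nu(g(\tilde\alpha)-\varrho)=0$) and $\nu>0$, this shows $\tilde\alpha$ is a scaled KKT point of (\ref{model2}). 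As a by-product, inserting these equations into the chain of inequalities leading to (\ref{lowerbound:kkt2}) shows that $\gamma$ is in fact finite, in agreement with the truncation carried out in Section \ref{sec:models}.

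The main obstacle — and the only place the argument really differs from Theorem \ref{them:44} — is precisely the possible infiniteness of $\gamma$. One cannot just apply an infinite-dimensional KKT theorem to the problem restricted to $\gamma$, because the weighted $\ell_{2,p}^p$ objective need not be Fr\'echet (or even G\^ateaux) differentiable at $\tilde\alpha_\gamma$ when infinitely many groups are nonzero: the exponentially growing weights $\beta_l=\eta^l l^p$ together with the decay of $\|\tilde\alpha_{l\cdot}\|$ spoil differentiability in $\ell^2$. Working through finite sub-blocks and verifying that the multipliers $\nu_F$ agree (they are pinned by the single non-degenerate block $l^*$) is what makes the reduction legitimate; the rest — restrict to the support, use LICQ-based KKT, multiply by $\|\tilde\alpha_{l\cdot}\|^2$, and extend by zero — is exactly the pattern of Theorem \ref{them:44}.
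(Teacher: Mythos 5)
Your proposal is correct and follows the same skeleton the paper intends (the paper gives no separate proof, only the remark that one should repeat the argument of Theorem \ref{them:44} with $\mathbb{L}$ replaced by $\mathbb{N}_0$): restrict to the group support, observe the constraint must be active, invoke LICQ through the non-degenerate block $l^*$, write the KKT system in Wirtinger form, multiply by $\|\tilde\alpha_{l\cdot}\|^2$, and extend by zero off the support. Where you genuinely add value is in recognizing that the support $\gamma$ of a local minimizer of (\ref{model2}) is not known to be finite \emph{before} the KKT conditions are established (the finiteness argument of Section \ref{sec:models} presupposes a scaled KKT point), so a blind application of a finite-dimensional multiplier rule on the $\gamma$-slice is not available, and an infinite-dimensional one would founder on the non-differentiability of $\|\cdot\|_{2,p}^p$ with the weights $\beta_l=\eta^l l^p$. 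Your device of passing to arbitrary finite sub-blocks $F\ni l^*$, noting that each slice problem inherits local optimality and satisfies LICQ, and then observing that the resulting multipliers $\nu_F$ are all pinned to the same value by the single block $l^*$ (so that taking $F=\{l,l^*\}$ recovers the stationarity relation for every $l\in\gamma$ with one common $\nu$), closes exactly the gap that the paper's one-line justification leaves open. All the individual steps check out: the activeness argument via the $l^*$-block, the identity $\nu_F=p\beta_{l^*}\|\tilde\alpha_{l^*\cdot}\|^{p-1}/(2\|Y_{l^*\cdot}\tilde\alpha-\alpha^\circ_{l^*\cdot}\|)$, and the inheritance of local optimality along finitely supported perturbations. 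This is a legitimate and arguably necessary strengthening of the paper's sketch rather than a different proof.
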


\subsection{A smoothing penalty algorithm  for problem (\ref{fini:cmodel})}

 We define a smoothing function of  the nonsmooth function $\lambda (g(\alpha))_+$ as follows
 $$ f_{\lambda,\mu}(\alpha)=\psi_{\lambda,\mu}(g(\alpha))$$
with $\psi_{\lambda,\mu}(s):=\lambda\max\limits_{0\leq t\leq1}\{st-\frac{\mu}{2}t^2\}$
and $\mu>0$ is a smoothing parameter.
It is easy to verify that
$\psi'_{\lambda,\mu}(s)=\lambda\min\{\max\{\frac{s}{\mu},0\},1\}\geq0$ and $\vert\psi'_{\lambda,\mu}(s_{1})-\psi'_{\lambda,\mu}(s_{2})\vert\leq\frac{\lambda}{\mu}\vert s_{1}-s_{2}\vert$, $\forall\,s_{1},s_{2}\in\mathbb{R}.$
It is not hard to show that
$$f_{\lambda,\mu}(\alpha)=\left\{\begin{array}{ll}
0 & {\rm{if }} \, g(\alpha)\leq0\\
\frac{\lambda}{2\mu}(g(\alpha))^2 & {\rm{if }} \,0\leq g(\alpha)\leq\mu\\
\lambda g(\alpha)-\frac{\lambda\mu}{2} & {\rm{if }} \,g(\alpha)\geq\mu
\end{array}
\right.$$
and
\begin{equation}\label{smooth:diff}
  0\leq \lambda(g(\alpha))_+-f_{\lambda,\mu}(\alpha)\leq\tfrac{\lambda\mu}{2}.
  \end{equation}
By Wirtinger  calculus, we obtain
$\nabla f_{\lambda,\mu}(\alpha)=\small\left[\setlength{\arraycolsep}{0.8pt}\begin{array}{c}
                                                      \partial_{\alpha}f_{\lambda,\mu}(\alpha) \\
                                                      \partial_{\bar{\alpha}}f_{\lambda,\mu}(\alpha)
                                                    \end{array}\right],$
where $\partial_{\bar{\alpha}}f_{\lambda,\mu}(\alpha)=\overline{\partial_{\alpha}f_{\lambda,\mu}(\alpha)}$ and
\begin{equation*}\label{smoothf:partial}
  \partial_{\bar{\alpha}}f_{\lambda,\mu}(\alpha)=\left\{\begin{array}{ll}
0 &{\rm{if }} \, g(\alpha)\leq0\\
\frac{\lambda}{\mu}g(\alpha)(\hat{Y}\alpha-\hat{\alpha}^{\circ}) &{\rm{if }} \, 0\leq g(\alpha)\leq\mu\\
\lambda (\hat{Y}\alpha-\hat{\alpha}^{\circ})  &{\rm{if }} \, g(\alpha)\geq\mu.
\end{array}
\right.
\end{equation*}
More details about the smoothing function can be found in \cite{CHEN2012} and references therein.

We consider the following optimization problem
  \begin{eqnarray}\label{penalty:smooth}
       \min\limits_{\alpha\in\mathbb{C}^{d}}&& F_{\lambda,\mu}(\alpha):=
\Phi(\alpha)+f_{\lambda,\mu}(\alpha).
\end{eqnarray}
For fixed positive parameters $\lambda$ and $\mu$, $F_{\lambda,\mu}$ is continuous and level-bounded since $\Phi$ is level-bounded and $f_{\lambda,\mu}$ is nonnegative. Moreover, the gradient of $f_{\lambda,\mu}$ is Lipschitz continuous.

Now, we  propose a smoothing penalty algorithm for solving problem (\ref{fini:cmodel}).
\begin{algorithm}
{A smoothing penalty algorithm  for problem (\ref{fini:cmodel})}\label{alg:smoothnpg}
\begin{algorithmic}
\STATE {
 Choose $\lambda^{0}>0$, $\mu^{0}>0$, $\varepsilon^{0}>0$, $\varsigma_{1}>1$, and $0<\varsigma_{2}<1$.
  Set $k=0$ and $\alpha^{0}=\tilde{\alpha}:=\hat{Y}^{-1} \alpha^{{\circ}}$.}
\STATE {(1) If $F_{\lambda^{k},\mu^{k}}(\alpha^{k})> F_{\lambda^{k},\mu^{k}}(\tilde{\alpha})$, set  $\alpha^{k}=\tilde{\alpha}$; otherwise $\alpha^{k}=\alpha^k$.}
\STATE {(2)  Solve problem (\ref{penalty:smooth})  with initial point $\alpha^{k}$,  $\lambda=\lambda^{k}$, $\mu=\mu^{k}$,  and  find an $\alpha^{k+1}$ satisfying
              \begin{equation}\label{alg2:dist}
              \| p\beta_{l}\|\alpha^{k+1}_{l\cdot}\|^{p}\alpha^{k+1}_{l\cdot}+2\|\alpha^{k+1}_{l\cdot}\|^{2}(\partial_{\bar{\alpha}}f_{\lambda^{k},\mu^{k}}(\alpha^{k+1}))_{l\cdot} \|\leq\varepsilon^{k},\quad \forall \,l\in\mathbb{L}.
              \end{equation}}\vspace{-4mm}
\STATE{(3) Set  $\lambda^{k+1}= \varsigma_{1}\lambda^{k}$, $\mu^{k+1}=\varsigma_{2}\mu^{k+1}$, $\varepsilon^{k+1}=\varsigma_{2}\varepsilon^{k}$.}
\STATE{(4)  Set $k= k+1$ and go to  (1).}
\end{algorithmic}
\end{algorithm}

We give the convergence of Algorithm \ref{alg:smoothnpg} in the following theorem.
\begin{theorem}Let  $\{\alpha^{k}\}$ be generated by Algorithm \ref{alg:smoothnpg}. Then, the following statements hold.
\begin{enumerate}
  \item [(i)] $\{\alpha^{k}\}$ is bounded.
  \item [(ii)]  Any accumulation point $\alpha^{*}$ of $\{\alpha^{k}\}$ is a scaled KKT point of problem (\ref{fini:cmodel}).
\end{enumerate}
\end{theorem}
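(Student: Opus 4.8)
The argument rests on the running estimate $F_{\lambda^{k},\mu^{k}}(\alpha^{k+1})\le F_{\lambda^{k},\mu^{k}}(\alpha^{k})\le F_{\lambda^{k},\mu^{k}}(\tilde\alpha)$, valid for every $k$: Step~(1) gives the second inequality, while Step~(2), solved by a descent method initialized at $\alpha^{k}$, gives the first. Since $\tilde\alpha=\hat Y^{-1}\hat\alpha^{\circ}$ is the unconstrained minimizer of the strongly convex $g$ and $\tilde\alpha\in\mathrm{int}\,\mathcal{F}_{e}$, we have $g(\tilde\alpha)<0$, so $f_{\lambda^{k},\mu^{k}}(\tilde\alpha)=0$ and $F_{\lambda^{k},\mu^{k}}(\tilde\alpha)=\Phi(\tilde\alpha)$, a constant independent of $k$. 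Hence $\Phi(\alpha^{k+1})\le F_{\lambda^{k},\mu^{k}}(\alpha^{k+1})\le\Phi(\tilde\alpha)$ for all $k$; since $\Phi$ is level-bounded, $\{\alpha^{k}\}$ is bounded, which is (i). Using in addition the bound $0\le\lambda^{k}(g(\alpha^{k+1}))_{+}-f_{\lambda^{k},\mu^{k}}(\alpha^{k+1})\le\lambda^{k}\mu^{k}/2$ from (\ref{smooth:diff}) together with $f_{\lambda^{k},\mu^{k}}(\alpha^{k+1})=F_{\lambda^{k},\mu^{k}}(\alpha^{k+1})-\Phi(\alpha^{k+1})\le\Phi(\tilde\alpha)$, we get $(g(\alpha^{k+1}))_{+}\le\Phi(\tilde\alpha)/\lambda^{k}+\mu^{k}/2\to0$, because $\lambda^{k}\to\infty$ and $\mu^{k}\to0$; thus any accumulation point $\alpha^{*}$ satisfies $g(\alpha^{*})\le0$.

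For (ii), fix a subsequence $\alpha^{k_{j}}\to\alpha^{*}$. As $g(0)=c-\varrho>0$ while $g(\alpha^{*})\le0$, we have $\alpha^{*}\neq0$; let $\gamma:=\{l\in\mathbb{L}:\alpha^{*}_{l\cdot}\neq0\}\neq\emptyset$. Put $\nu^{j}:=\psi'_{\lambda^{k_{j}-1},\mu^{k_{j}-1}}(g(\alpha^{k_{j}}))\ge0$, so that $(\partial_{\bar\alpha}f_{\lambda^{k_{j}-1},\mu^{k_{j}-1}}(\alpha^{k_{j}}))_{l\cdot}=\nu^{j}(\hat Y_{l\cdot}\alpha^{k_{j}}-\alpha^{\circ}_{l\cdot})$, and (\ref{alg2:dist}) reads $\|\,p\beta_{l}\|\alpha^{k_{j}}_{l\cdot}\|^{p}\alpha^{k_{j}}_{l\cdot}+2\nu^{j}\|\alpha^{k_{j}}_{l\cdot}\|^{2}(\hat Y_{l\cdot}\alpha^{k_{j}}-\alpha^{\circ}_{l\cdot})\,\|\le\varepsilon^{k_{j}-1}\to0$ for all $l\in\mathbb{L}$. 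The proof then proceeds in three steps. \emph{(a)} $g(\alpha^{*})=0$: otherwise $g(\alpha^{*})<0$, hence $g(\alpha^{k_{j}})<0$ and $\nu^{j}=0$ for $j$ large; letting $j\to\infty$ in the displayed inequality with a fixed $l\in\gamma$ forces $p\beta_{l}\|\alpha^{*}_{l\cdot}\|^{p}\alpha^{*}_{l\cdot}=0$, contradicting $\alpha^{*}_{l\cdot}\neq0$. So $g(\alpha^{*})=0$, and complementarity then holds automatically. \emph{(b)} $\{\nu^{j}\}$ is bounded (the main difficulty). \emph{(c)} Granting (b), extract $\nu^{j_{i}}\to\nu^{*}\in[0,\infty)$ and pass to the limit: for each $l\in\mathbb{L}$, $p\beta_{l}\|\alpha^{*}_{l\cdot}\|^{p}\alpha^{*}_{l\cdot}+2\nu^{*}\|\alpha^{*}_{l\cdot}\|^{2}(\hat Y_{l\cdot}\alpha^{*}-\alpha^{\circ}_{l\cdot})=0$; together with $\nu^{*}\ge0$, $g(\alpha^{*})=0$ and $\nu^{*}g(\alpha^{*})=0$, this is exactly (\ref{finite:kkt}), so $\alpha^{*}$ is a scaled KKT point of (\ref{fini:cmodel}). (Necessarily $\nu^{*}>0$, since $\nu^{*}=0$ would give $p\beta_{l}\|\alpha^{*}_{l\cdot}\|^{p}\alpha^{*}_{l\cdot}=0$ for $l\in\gamma$, which is impossible.)

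\noindent\textbf{Main obstacle.}
The heart of the matter is step~(b). If $\{\nu^{j}\}$ were unbounded, then along a further subsequence $\nu^{j}\to\infty$, and dividing (\ref{alg2:dist}) by $\nu^{j}$ and letting $j\to\infty$ yields $\|\alpha^{*}_{l\cdot}\|^{2}(\hat Y_{l\cdot}\alpha^{*}-\alpha^{\circ}_{l\cdot})=0$, i.e. $\hat Y_{l\cdot}\alpha^{*}=\alpha^{\circ}_{l\cdot}$ for every $l\in\gamma$. Since $\mathrm{supp}(\alpha^{*})\subseteq\gamma$, this forces $\alpha^{*}_{\gamma}$ to be the unconstrained minimizer of the strongly convex quadratic $g_{\gamma}$, hence $\min g_{\gamma}=g_{\gamma}(\alpha^{*}_{\gamma})=g(\alpha^{*})=0$. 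I would rule this degenerate situation out by combining the fact that $g$ attains a strictly negative global minimum ($g(\tilde\alpha)<0$) with a feasibility/descent argument for $F_{\lambda^{k},\mu^{k}}$ in a neighborhood of such an $\alpha^{*}$; this is precisely the algorithmic counterpart of the constraint qualification ``there is $l$ with $\|\alpha^{*}_{l\cdot}\|(\hat Y_{l\cdot}\alpha^{*}-\alpha^{\circ}_{l\cdot})\neq0$'' invoked in Theorem~\ref{them:44}, and establishing it cleanly is where the real work lies. (Conversely, as soon as some $l\in\gamma$ has $\hat Y_{l\cdot}\alpha^{*}\neq\alpha^{\circ}_{l\cdot}$, the displayed inequality for that $l$ with $\|\alpha^{k_{j}}_{l\cdot}\|$ bounded away from $0$ bounds $\nu^{j}$ above, giving (b) at once.)
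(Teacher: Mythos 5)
Your overall route is the same as the paper's: part (i) via $\Phi(\alpha^{k+1})\le F_{\lambda^{k},\mu^{k}}(\alpha^{k+1})\le F_{\lambda^{k},\mu^{k}}(\tilde\alpha)=\Phi(\tilde\alpha)$ and level-boundedness of $\Phi$; feasibility of $\alpha^{*}$ via $(g(\alpha^{k}))_{+}\le \Phi(\tilde\alpha)/\lambda^{k-1}+\mu^{k-1}/2\to 0$; the exclusion of $g(\alpha^{*})<0$ by noting that the inexact stationarity condition (\ref{alg2:dist}) would then force $\alpha^{*}=0$, which is infeasible; and the final passage to the limit in (\ref{alg2:dist}) once the multipliers $t^{k}=\psi'_{\lambda^{k},\mu^{k}}(g(\alpha^{k}))$ are known to be bounded. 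All of those steps are carried out correctly and match the paper.

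The genuine gap is exactly your step (b), which you explicitly leave open: you never prove that $\{\nu^{j}\}$ is bounded, you only reduce its failure to the degenerate scenario in which $\hat Y_{l\cdot}\alpha^{*}=\alpha^{\circ}_{l\cdot}$ for all $l\in\gamma$, i.e.\ $\alpha^{*}_{\gamma}$ is the unconstrained minimizer of $g_{\gamma}$ with $\min g_{\gamma}=g(\alpha^{*})=0$, and then gesture at ``a feasibility/descent argument in a neighborhood of such an $\alpha^{*}$'' without supplying it. A proof must either exclude this scenario or impose a hypothesis that does. For comparison, the paper disposes of it in one line: from $g(\alpha^{*})=0$ it concludes that $\alpha^{*}$ is not the global minimizer of $g$ (since $g(\tilde\alpha)<0$), hence $\hat Y\alpha^{*}-\hat\alpha^{\circ}\neq 0$, and then asserts that $(\hat Y\alpha^{*}-\hat\alpha^{\circ})_{\gamma}\neq 0$. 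Note that this last assertion is precisely the nondegeneracy condition of Theorem \ref{them:44} (there exists $l$ with $\|\alpha^{*}_{l\cdot}\|(\hat Y_{l\cdot}\alpha^{*}-\alpha^{\circ}_{l\cdot})\neq 0$): the global minimum of $g$ being negative does not by itself force the minimum of $g_{\gamma}$ over the coordinate subspace $\{\alpha:\alpha_{\tau}=0\}$ to be negative, so your instinct that this is ``where the real work lies'' is accurate. Nevertheless, as submitted your argument does not establish (ii): you have correctly located the crux, shown what boundedness of $\{\nu^{j}\}$ would yield, and reduced its failure to a single degenerate configuration, but the step that rules that configuration out is missing, and it is not evident that your sketched descent argument can close it without an additional hypothesis of the type used in Theorem \ref{them:44}.
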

\begin{proof}
(i) We can see that
$$\Phi(\alpha^{k+1})\leq F_{\lambda^{k},\mu^{k}}(\alpha^{k+1})\leq F_{\lambda^{k},\mu^{k}}(\tilde{\alpha})=\Phi(\tilde{\alpha}),$$
where the first inequality follows from that $f_{\lambda^{k},\mu^{k}}(\alpha^{k+1})\geq0$, the second inequality  follows from  step (1) of Algorithm \ref{alg:smoothnpg}, and the equality is from $\tilde{\alpha}=\hat{Y}^{-1} \alpha^{{\circ}}\in\mathcal{F}_e$ and $f_{\lambda^{k},\mu^{k}}(\tilde{\alpha})=0$. Since $\Phi$ is level-bounded, $\{\alpha^{k}\}$ is bounded.

(ii) Let $\alpha^{*}$ be an accumulation point of $\{\alpha^{k}\}$ and $\{\alpha^{k}\}_{k\in\mathcal{K}}$ be a  subsequence of $\{\alpha^{k}\}$ such that $\{\alpha^{k}\}\rightarrow\alpha^{*}$ as $k\rightarrow\infty$, $k\in\mathcal{K}$.
Note that
$$\lambda^{k-1}(g(\alpha^{k}))_{+}-\tfrac{\lambda^{k-1}\mu^{k-1}}{2}\leq f_{\lambda^{k-1},\mu^{k-1}}(\alpha^{k})\leq
 F_{\lambda^{k-1},\mu^{k-1}}(\alpha^{k})\leq F_{\lambda^{k-1},\mu^{k-1}}(\tilde{\alpha})\leq\Phi(\tilde{\alpha}),$$
where the first inequality follows from (\ref{smooth:diff}). Then, we have
\begin{equation}\label{ineq:g1}
  (g(\alpha^{k}))_{+}\leq\frac{\Phi(\tilde{\alpha})}{\lambda^{k-1}}+\frac{\mu^{k-1}}{2}.
\end{equation}
From step (3) in Algorithm \ref{alg:smoothnpg}, $\lambda^{k-1}\rightarrow\infty$ {{and $\mu^{k-1}\rightarrow0$}}, as $k\rightarrow\infty$, $k\in\mathcal{K}$.
Taking limits in  (\ref{ineq:g1}) as $k\rightarrow\infty$, $k\in\mathcal{K}$,   we obtain that $(g(\alpha^{*}))_{+}\leq0$. Hence, $\alpha^{*}\in\mathcal{F}_e$.

From (\ref{alg2:dist}), we have
\begin{equation}\label{pf:dist}
  \| p\beta_{l}\|\alpha^{k}_{l\cdot}\|^{p}\alpha^{k}_{l\cdot}+2\|\alpha^{k}_{l\cdot}\|^{2}(\partial_{\bar{\alpha}}f_{\lambda^{{k}},\mu^{{k}}}(\alpha^{k}))_{l\cdot} \|\leq\varepsilon^{{k-1}},\quad \forall \,l\in\mathbb{L}.
\end{equation}
We first assume that
$g(\alpha^{*})<0$. For all sufficiently large  $k$, we obtain $g(\alpha^{k})<0$ and  (\ref{pf:dist}) becomes
$$ p\beta_{l}\|\alpha^{k}_{l\cdot}\|^{p+1}\leq\varepsilon^{{k-1}},\quad \forall \,l\in\mathbb{L}.$$
Taking limits on both sides of the above relation, we obtain $\alpha^{*}=0$ which contradicts to Assumption \ref{assu:inpa}. Thus, $g(\alpha^{*})=0$.

Let $t^{k}:=\psi'_{\lambda^{k},\epsilon^{k}}(g(\alpha^{k}))$ for notational simplicity, we have $t^{k}\geq0$.
Then (\ref{pf:dist}) reduces to
\begin{equation}\label{pf:kkt}
  \| p\beta_{l}\|\alpha^{k}_{l\cdot}\|^{p}\alpha^{k}_{l\cdot}+
  2t^{k}\|\alpha^{k}_{l\cdot}\|^{2}(\hat{Y}_{l\cdot}\alpha^{k}-\alpha_{l\cdot}^\circ) \|\leq\varepsilon^{{k-1}},\quad \forall \,l\in\mathbb{L}.
\end{equation}
Now, we prove that $\{t^{k}\}_{\mathcal{K}}$ is bounded. On the contrary, we assume $\{t^{k}\}_{\mathcal{K}}$  is unbounded and $\{t^{k}\}_{\mathcal{K}}\rightarrow\infty$, then,
\begin{equation*}
  \left\| \tfrac{p\beta_{l}}{t^{k}}\|\alpha^{k}_{l\cdot}\|^{p}\alpha^{k}_{l\cdot}+2
  \|\alpha^{k}_{l\cdot}\|^{2}(\hat{Y}_{l\cdot}\alpha^{k}-\alpha_{l\cdot}^\circ) \right\|\leq\tfrac{\varepsilon^{{k-1}}}{t^{k}},\quad \forall \,l\in\mathbb{L}.
\end{equation*}
Passing to the limit in the above relation gives $$\|\alpha^*_{l\cdot}\|^{2}(\hat{Y}_{l\cdot}\alpha^*-\alpha_{l\cdot}^\circ)=0,\quad \forall \,l\in\mathbb{L}.$$
Since $g(\alpha^*)=0$ implies $\alpha^*$ is in $ {\cal F}_e$, but is not a minimizer of $g$, we have  $\alpha^*\neq 0$ and $\hat{Y}\alpha^*-\hat{\alpha}^\circ\neq 0$.
Moreover, $g(\alpha^*)=g_\gamma([\alpha^*_\gamma;0_\tau])=0$ implies that  $\alpha_{l\cdot}^*\neq 0,$ ${\forall}l\in \gamma$ and $(\hat{Y}\alpha^*-\hat{\alpha}^\circ)_\gamma\neq 0$.
Thus, $\{t^{k}\}_{\mathcal{K}}$  is bounded. Let  $\{t^{k}\}_{\mathcal{K}}\rightarrow t^{*}$. Taking limits on both sides of (\ref{pf:kkt}) gives
\begin{equation*}
  \| p\beta_{l}\|\alpha^*_{l\cdot}\|^{p}\alpha^*_{l\cdot}+
  2t^*\|\alpha^*_{l\cdot}\|^{2}(\hat{Y}_{l\cdot}\alpha^*-\alpha_{l\cdot}^\circ) \|=0,\quad \forall \,l\in\mathbb{L}.
\end{equation*}
Therefore, $\alpha^*$ is a scaled KKT point of (\ref{fini:cmodel}).
\end{proof}

\section{Approximation error}
In \cite{q2018}, the authors gave the approximation error for random field using regularized $\ell_{2,1}$ model based on the observed random field  $T^\circ(\omega,{\rm{x}}) \in L_2(\Omega \times \mathbb{S}^2)$.
In this section, we estimate the approximation error of the inpainted random field  in the space $L_2(\Omega \times \mathbb{S}^2)$ based on the observed random field  $T^\circ(\omega,{\rm{x}}) \in L_2(\Omega \times \mathbb{S}^2)$.

For any fixed $\omega\in\Omega$, let $\hat{\alpha}^*(\omega):=(\alpha^*_{0,0}(\omega),\ldots,\alpha^*_{L_{\omega},L_{\omega}}(\omega))^{T}\in\mathbb{C}^{(L_{\omega}+1)^2}$ with the group support set $\gamma_{\omega}$ be a scaled KKT point of problem (\ref{fini:cmodel}).
By our results in the previous sections, $L_{\omega}$ is a finite number.
Moreover,  $\alpha^*(\omega):=((\hat{\alpha}^*(\omega))^T, 0,\ldots )^T$ is a scaled KKT point of problem (\ref{model2}) in the infinite-dimensional space $\ell^p_\beta$.

Let the random field defined by a  scaled KKT point $\alpha^{*}\in\ell^p_\beta$ of problem (\ref{model2}) be  \begin{equation}\label{REG:T}
  T^{*}(\omega,{\rm{x}})=\sum\limits_{l=0}^{\infty}\sum_{m=-l}^{l}\alpha_{l,m}^{*}(\omega)Y_{l,m}(\rm{x}),
\end{equation}
where  $\alpha_{l,m}^*(\omega)=0$, $l=L_{\omega}+1,\ldots$, $m=-l,\ldots,l$.
\begin{lemma}\label{sec5:TL} If the random variable $\omega\in\Omega$ has finite second order moment that is $\mathbb{E}[\|\omega\|^2]<\infty$ and there is $\kappa$ such that
$\|T^{*}(\omega_1,\mathrm{x})-T^{*}(\omega_2,\mathrm{x})\|_{L_{2}(\mathbb{S}^2)}\leq \kappa\|\omega_1-\omega_2\|$, $\forall \omega_1,\omega_2\in\Omega$, then $ T^*(\omega,{\rm{x}})\in L_{2}(\Omega\times\mathbb{S}^2).$
\end{lemma}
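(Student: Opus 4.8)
The goal is to show $T^*(\omega,\mathrm{x})\in L_2(\Omega\times\mathbb{S}^2)$, i.e. that $\|T^*\|^2_{L_2(\Omega\times\mathbb{S}^2)}=\mathbb{E}\big[\|T^*(\omega,\cdot)\|^2_{L_2(\mathbb{S}^2)}\big]<\infty$. The plan is to bound the inner $L_2(\mathbb{S}^2)$-norm of $T^*(\omega,\cdot)$ by an affine function of $\|\omega\|$ using the hypothesized Lipschitz property, and then take expectations, invoking the finite second moment assumption $\mathbb{E}[\|\omega\|^2]<\infty$. First I would fix any reference point $\omega_0\in\Omega$ and write, by the triangle inequality in $L_2(\mathbb{S}^2)$,
\begin{equation*}
\|T^*(\omega,\mathrm{x})\|_{L_2(\mathbb{S}^2)}\leq \|T^*(\omega,\mathrm{x})-T^*(\omega_0,\mathrm{x})\|_{L_2(\mathbb{S}^2)}+\|T^*(\omega_0,\mathrm{x})\|_{L_2(\mathbb{S}^2)}\leq \kappa\|\omega-\omega_0\|+M_0,
\end{equation*}
where $M_0:=\|T^*(\omega_0,\mathrm{x})\|_{L_2(\mathbb{S}^2)}$ is a finite constant (it is finite because, by Parseval, it equals $(\sum_{l=0}^{L_{\omega_0}}\|\alpha^*_{l\cdot}(\omega_0)\|^2)^{1/2}$, a finite sum since $\hat\alpha^*(\omega_0)$ is a scaled KKT point of the finite-dimensional problem (\ref{fini:cmodel})).

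Next I would square both sides and use $(a+b)^2\leq 2a^2+2b^2$ together with $\|\omega-\omega_0\|^2\leq 2\|\omega\|^2+2\|\omega_0\|^2$ to get
\begin{equation*}
\|T^*(\omega,\mathrm{x})\|^2_{L_2(\mathbb{S}^2)}\leq 2\kappa^2\|\omega-\omega_0\|^2+2M_0^2\leq 4\kappa^2\|\omega\|^2+4\kappa^2\|\omega_0\|^2+2M_0^2.
\end{equation*}
Taking expectation over $\omega$ and using monotonicity and linearity of $\mathbb{E}$,
\begin{equation*}
\|T^*\|^2_{L_2(\Omega\times\mathbb{S}^2)}=\mathbb{E}\big[\|T^*(\omega,\cdot)\|^2_{L_2(\mathbb{S}^2)}\big]\leq 4\kappa^2\,\mathbb{E}[\|\omega\|^2]+4\kappa^2\|\omega_0\|^2+2M_0^2<\infty,
\end{equation*}
since $\mathbb{E}[\|\omega\|^2]<\infty$ by hypothesis and the remaining terms are constants. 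This yields $T^*\in L_2(\Omega\times\mathbb{S}^2)$.

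I do not anticipate a serious obstacle here; the argument is essentially the standard fact that a Lipschitz map from a space with finite second moment into $L_2(\mathbb{S}^2)$ has finite second moment. The one point requiring a little care is the measurability of $\omega\mapsto T^*(\omega,\cdot)$ as an $L_2(\mathbb{S}^2)$-valued map (needed before ``$\mathbb{E}$'' even makes sense and before Fubini-type manipulations of $\|\cdot\|_{L_2(\Omega\times\mathbb{S}^2)}=\mathbb{E}[\|\cdot\|_{L_2(\mathbb{S}^2)}]$ are justified); this follows from the Lipschitz continuity in $\omega$ (hence continuity, hence measurability into the separable Hilbert space $L_2(\mathbb{S}^2)$), so I would mention it in one line rather than belabor it. A secondary detail is confirming $M_0<\infty$, which I would dispatch via Parseval and finiteness of $L_{\omega_0}$ as noted above.
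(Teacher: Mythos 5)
Your proposal is correct and follows essentially the same argument as the paper: fix a reference $\omega_0$, use the Lipschitz hypothesis and the triangle inequality to bound $\|T^*(\omega,\cdot)\|_{L_2(\mathbb{S}^2)}$ affinely in $\|\omega-\omega_0\|$, square, take expectations, and invoke $\mathbb{E}[\|\omega\|^2]<\infty$. Your extra remarks on measurability and the finiteness of $M_0$ are sensible additions but do not change the route.
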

\begin{proof}
Let $\tilde{\omega}\in\Omega$ be fixed.
Since  for any $\omega\in\Omega$, $$\|T^{*}(\omega,\mathrm{x})\|_{L_{2}(\mathbb{S}^2)}-\|T^{*}(\tilde{\omega},\mathrm{x})\|_{L_{2}(\mathbb{S}^2)}\leq\|T^{*}(\omega,\mathrm{x})-T^{*}(\tilde{\omega},\mathrm{x})\|_{L_{2}(\mathbb{S}^2)}\leq
\kappa\|\omega-\tilde{\omega}\|,$$ we have
$$\|T^{*}(\omega,\mathrm{x})\|^2_{L_{2}(\mathbb{S}^2)}
\leq\left(\kappa\|\omega-\tilde{\omega}\|+\|T^{*}(\tilde{\omega},\mathrm{x})\|_{L_{2}(\mathbb{S}^2)}\right)^2
\leq2\kappa^2\|\omega-\tilde{\omega}\|^2+2\|T^{*}(\tilde{\omega},\mathrm{x})\|^2_{L_{2}(\mathbb{S}^2)}.$$
Hence, we obtain  $$\mathbb{E}[\|T^{*}(\omega,\mathrm{x})\|^2_{L_{2}(\mathbb{S}^2)}]\leq 
2\|T^{*}(\tilde{\omega},\mathrm{x})\|^2_{L_{2}(\mathbb{S}^2)}+2\kappa^2\mathbb{E}[\|\omega-\tilde{\omega}\|^2]<\infty, $$
where the last inequality follows from $\mathbb{E}[\|\omega\|^2]<\infty$.
Thus, $ T^*(\omega,{\rm{x}})\in L_{2}(\Omega\times\mathbb{S}^2).$
\end{proof}

\begin{theorem}
Let $T^\circ(\omega,{\rm{x}}) \in L_2(\Omega \times \mathbb{S}^2)$ be the  observed random field. 
Then for any $\epsilon>0$ there exists $L$ such that
$$0\leq\|\mathcal{A}(T_{L}^{*}(\omega,{\rm{x}}))-T^\circ(\omega,{\rm{x}})\|^{2}_{L_{2}(\Omega \times \mathbb{S}^2)}-{\varrho}<\epsilon,$$
where $T_{L}^{*}(\omega,{\rm{x}})=\sum_{l=0}^L\sum_{m=-l}^l\alpha^*_{l,m}(\omega)Y_{l,m}({\rm{x}})$.
\end{theorem}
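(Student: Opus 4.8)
The plan is to work pointwise in $\omega$ first, then integrate over $\Omega$ and invoke dominated convergence. For a fixed $\omega$, the scaled KKT point $\alpha^*(\omega)$ of problem (\ref{model2}) is feasible, so $g(\alpha^*(\omega))\le 0$, i.e. the full infinite series obeys the constraint with value $\le\varrho$; but in fact, because $\alpha=0$ is infeasible and $\nu>0$, the complementary slackness in (\ref{def:infi:kkt}) forces equality,
$$
(\alpha^*(\omega))^{H}Y\alpha^*(\omega)-2{\rm{Re}}((\alpha^*(\omega))^{H}\alpha^{\circ})+c=\varrho.
$$
Rewriting this using (\ref{constraint}) gives $\|\mathcal{A}(T^*(\omega,\cdot))-T^\circ(\omega,\cdot)\|^2_{L_2(\mathbb{S}^2)}=\varrho$ for every $\omega$. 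The point of the theorem is then that truncating $T^*$ at level $L$ (which for $L\ge L_\omega$ changes nothing, but for general $\omega$ the thresholds $L_\omega$ are unbounded) perturbs this quantity by a controllable amount.

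First I would expand, for each fixed $\omega$,
$$
\|\mathcal{A}(T_L^*(\omega,\cdot))-T^\circ(\omega,\cdot)\|^2_{L_2(\mathbb{S}^2)}
=\|\mathcal{A}(T^*(\omega,\cdot))-T^\circ(\omega,\cdot)\|^2_{L_2(\mathbb{S}^2)}
+\big(\text{cross and tail terms in }T^*-T_L^*\big),
$$
and bound the difference by a constant times $\|T^*(\omega,\cdot)-T^*_L(\omega,\cdot)\|_{L_2(\mathbb{S}^2)}$ times $\big(\|T^*(\omega,\cdot)\|_{L_2(\mathbb{S}^2)}+\|T^\circ(\omega,\cdot)\|_{L_2(\mathbb{S}^2)}\big)$, plus the square of the tail. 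By Parseval, $\|T^*(\omega,\cdot)-T^*_L(\omega,\cdot)\|^2_{L_2(\mathbb{S}^2)}=\sum_{l>L}\|\alpha^*_{l\cdot}(\omega)\|^2$, which is the tail of a convergent series and hence tends to $0$ as $L\to\infty$, for each $\omega$. So pointwise in $\omega$ the quantity $\|\mathcal{A}(T_L^*(\omega,\cdot))-T^\circ(\omega,\cdot)\|^2_{L_2(\mathbb{S}^2)}$ converges to $\varrho$ as $L\to\infty$.

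Next I would take expectations and pass the limit inside. Using $\|\mathcal{A}(T_L^*)-T^\circ\|^2_{L_2(\Omega\times\mathbb{S}^2)}=\mathbb{E}\big[\|\mathcal{A}(T_L^*(\omega,\cdot))-T^\circ(\omega,\cdot)\|^2_{L_2(\mathbb{S}^2)}\big]$, I would produce an $\omega$-integrable dominating function: since $\|T^*_L(\omega,\cdot)\|_{L_2(\mathbb{S}^2)}\le\|T^*(\omega,\cdot)\|_{L_2(\mathbb{S}^2)}$ uniformly in $L$, and $T^*,T^\circ\in L_2(\Omega\times\mathbb{S}^2)$ (the former by Lemma \ref{sec5:TL}), the integrand is dominated by $2\|T^*(\omega,\cdot)\|^2_{L_2(\mathbb{S}^2)}+2\|T^\circ(\omega,\cdot)\|^2_{L_2(\mathbb{S}^2)}\in L_1(\Omega)$. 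Dominated convergence then yields $\|\mathcal{A}(T_L^*)-T^\circ\|^2_{L_2(\Omega\times\mathbb{S}^2)}\to\varrho$, and since each term exceeds $\varrho$ only by the nonnegative tail/cross contributions (and is $\ge\varrho$ by the same pointwise algebra once one checks the sign — essentially that removing high-frequency content of $T^*$ on $\Gamma$ cannot decrease the mismatch below the constraint value, using that the tail coefficients of $T^*$ are themselves small relative to the cross terms), one gets the one-sided bound $0\le\|\mathcal{A}(T_L^*)-T^\circ\|^2_{L_2(\Omega\times\mathbb{S}^2)}-\varrho<\epsilon$ for $L$ large.

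The main obstacle I anticipate is the nonnegativity ``$0\le$'' half of the inequality: the cross term $-2\mathrm{Re}\langle\mathcal{A}(T^*-T_L^*),\,\mathcal{A}(T^*)-T^\circ\rangle_{L_2(\mathbb{S}^2)}$ is not obviously of one sign, so the clean identity $\|\mathcal{A}(T^*)-T^\circ\|^2=\varrho$ does not transfer to $T_L^*$ termwise. One has to argue either that this cross term, together with the positive tail term $\|\mathcal{A}(T^*-T_L^*)\|^2_{L_2(\mathbb{S}^2)}$, is nonnegative — plausibly via $\mathcal{A}$ being an orthogonal projection in $L_2(\mathbb{S}^2)$ so that $\|\mathcal{A}(T^*)-T^\circ\|^2\le\|\mathcal{A}(T_L^*)-T^\circ\|^2$ would need a Pythagorean decomposition that is not automatic here — or, more robustly, weaken the claim's lower bound to something that genuinely follows, or choose the truncation adaptively so $L\ge L_\omega$ on a set of probability close to $1$ and control the rest. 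I would first try the orthogonality route: since $\mathcal{A}$ is multiplication by the indicator of $\Gamma$, for functions supported appropriately one does get $\|\mathcal{A}(T_L^*)-T^\circ\|^2_{L_2(\mathbb{S}^2)}=\|\mathcal{A}(T_L^*)-\mathcal{A}(T^*)\|^2+\|\mathcal{A}(T^*)-T^\circ\|^2-2\mathrm{Re}\langle\cdots\rangle$, and the success of the proof hinges on showing the remaining inner product is $o$ of the other terms as $L\to\infty$, which again reduces to Parseval tail estimates.
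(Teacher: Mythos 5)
Your setup is right as far as it goes: the constraint is active (complementary slackness with $\nu_\omega>0$ gives $\|\mathcal{A}(T^*(\omega,\cdot))-T^\circ(\omega,\cdot)\|^2_{L_2(\mathbb{S}^2)}=\varrho$), the truncation error is a Parseval tail, and dominated convergence handles the expectation. But you have correctly diagnosed, and then not closed, the gap that carries the whole theorem: the lower bound $0\le$. Your Cauchy--Schwarz estimate only yields $\bigl|\,\|\mathcal{A}(T_L^*)-T^\circ\|^2_{L_2(\Omega\times\mathbb{S}^2)}-\varrho\,\bigr|<\epsilon$, which is strictly weaker than the claim. The orthogonality route you propose as a fallback does not work: although $\mathcal{A}$ is multiplication by the indicator of $\Gamma$, the spherical harmonics are not orthogonal over $\Gamma$ (the Gram matrix $Y$ is not diagonal), so there is no Pythagorean identity making $\|\mathcal{A}(T^*)-T^\circ\|^2\le\|\mathcal{A}(T_L^*)-T^\circ\|^2$, and the cross term $-2\mathrm{Re}\langle \mathcal{A}(T^*-T_L^*),\,\mathcal{A}(T^*)-T^\circ\rangle$ has no sign a priori.

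The missing idea is to use the \emph{stationarity} part of the scaled KKT conditions, not just feasibility and complementary slackness. For $l$ in the support with $l>L$, the condition $p\beta_l\|\alpha^*_{l\cdot}(\omega)\|^{p-2}\alpha^*_{l\cdot}(\omega)+2\nu_\omega(Y_{l\cdot}\alpha^*(\omega)-\alpha^\circ_{l\cdot}(\omega))=0$, paired with the identity $\alpha^\circ(\omega)=Y\alpha^{\rm obs}(\omega)$, lets you take the inner product with $\alpha^*_{l\cdot}(\omega)$ and sum over $l>L$ to evaluate the problematic cross term \emph{exactly}: writing $\tilde\alpha^*$ for the tail block, the cross-plus-tail remainder equals $-(\tilde\alpha^*)^H\tilde Y\tilde\alpha^*-\frac{p}{\nu_\omega}\sum_{l>L}\beta_l\|\alpha^*_{l\cdot}\|^p$, which is manifestly nonpositive (giving the $0\le$ half) and is controlled by the tails $\sum_{l>L}\|\alpha^*_{l\cdot}\|^2$ and $\sum_{l>L}\beta_l\|\alpha^*_{l\cdot}\|^p$ (giving the $<\epsilon$ half). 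To make the second tail summable with a bound independent of $\omega$ you also need a uniform positive lower bound $\bar\nu\le\nu_\omega$, which the paper extracts from the lower-bound estimate (2.5) on nonzero groups together with $\alpha^*(\omega)\in\ell^p_\beta$; your proposal never touches the multipliers quantitatively, so this step is also absent. Without these two ingredients the argument cannot produce the one-sided inequality as stated.
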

\begin{proof}
Since $T^\circ(\omega,{\rm{x}})\in L_2(\Omega \times \mathbb{S}^2)$, by Fubini's theorem, for $\omega\in\Omega$, $T^\circ(\omega,{\rm{x}})\in L_{2}(\mathbb{S}^{2})$, $P$-a.s., in which case $T^\circ(\omega,{\rm{x}})$ admits an expansion in terms of spherical harmonics, $P$-a.s., that is,
$T^\circ(\omega,{\rm{x}})=\sum_{l=0}^\infty\sum_{m=-l}^l\alpha_{l,m}^{\rm{obs}}(\omega)Y_{l,m}({\rm{x}}), $ $P$-a.s.,
where $\alpha^{\rm{obs}}(\omega)=(\alpha_{0,0}^{\rm{obs}}(\omega),\alpha_{1,-1}^{\rm{obs}}(\omega),\ldots)^T$ is the Fourier coefficient vector of $T^\circ(\omega,{\rm{x}})$.

By Definition \ref{def:kkt}
for  any $\omega\in\Omega$, $\alpha^*(\omega)\neq0$, we have $\nu_\omega>0$. Now, we prove that there exists a positive scalar $\bar{\nu}$ such that $\nu_\omega\geq\bar{\nu}$ for any $\omega\in\Omega$. On the contrary, if there exists a $\omega\in\Omega$ such that $\nu_\omega\rightarrow 0$, then from
\begin{eqnarray*}
   \|\alpha^*(\omega)\|_{2,p}^{p} =\sum\limits_{\{l\in\mathbb{N}_{0}:\,\|\alpha^*_{l\cdot}(\omega)\|\neq0\}}\beta_{l}\|\alpha^*_{l\cdot}(\omega)\|^{p}
    &\geq&\left(\frac{p}{2\nu_\omega\tilde{c}}\right)^{\frac{p}{1-p}}\sum\limits_{\{l\in\mathbb{N}_{0}:\,\|\alpha^*_{l\cdot}(\omega)\|\neq0\}}(\eta^l l^{p})^{\frac{1}{1-p}},
 \end{eqnarray*}
 we have $\|\alpha^*(\omega)\|_{2,p}^{p}\rightarrow\infty$ which is a contradiction with $\alpha^{*}(\omega)\in\ell_{\beta}^p$.
Thus, from
$\alpha^{*}(\omega)\in\ell_{\beta}^p$ and (\ref{lowerbound:kkt2}),  there exists a positive scalar $\bar{\nu}$ such that $\nu_\omega\geq\bar{\nu}$ for any $\omega\in\Omega$.
 Thus, for any $\epsilon_1>0$ there exists $L_1$ such that $\frac{p}{\bar{\nu}}\sum_{l=L_1+1}^{\infty}\beta_{l}\|\alpha_{l\cdot}^*(\omega)\|^p<\frac{\epsilon_1}{2}$ for any $\omega\in\Omega$, which implies that $$\mathbb{E}\left[\frac{p}{\nu_\omega}\sum\limits_{l=L_1+1}^{\infty}\beta_{l}\|\alpha_{l\cdot}^*(\omega)\|^p\right]<\frac{p}{\bar{\nu}}\mathbb{E}\left[\sum\limits_{l=L_1+1}^{\infty}\beta_{l}\|\alpha_{l\cdot}^*(\omega)\|^p\right]<\frac{\epsilon_1}{2}.$$
By Lemma \ref{sec5:TL}, for any  $\epsilon_2>0$ there exists $L_2$ such that $\sum_{l=L_2+1}^{\infty}\mathbb{E}[\|\alpha_{l\cdot}^*(\omega)\|^2]<\frac{\epsilon_2}{2}$.
Let $\epsilon=\max\{\epsilon_1,\epsilon_2\}$, $L=\max\{L_1,L_2\}$ and $d=(L+1)^2$.

 For notational simplicity, let
 $\alpha^*(\omega)=((\hat{\alpha}^*(\omega))^T,(\tilde{\alpha}^*(\omega))^T)^T\in\ell_{\beta}^p$,  where
 $\hat{\alpha}^*(\omega)\in\mathbb{C}^{d}$, $\omega\in\Omega$
 and $Y=\scriptsize\left[\setlength{\arraycolsep}{0.8pt}
         \begin{array}{cc}
           \hat{Y}& X \\
           X^H & \tilde{Y} \\
         \end{array}
       \right]
$, where  $\hat{Y}\in\mathbb{C}^{d\times d}$.
Let
$\tilde{T}_{L}^{*}(\omega,{\rm{x}})=
\sum_{l=L+1}^{\infty}\sum_{m=-l}^l\alpha_{l,m}^*(\omega)Y_{l,m}({\rm{x}})$,
we have
\begin{eqnarray}\label{eq:541}
 \begin{split}
  \varrho&=\|\mathcal{A}(T^{*}(\omega,{\rm{x}}))-T^\circ(\omega,{\rm{x}})\|^{2}_{L_{2}(\Omega \times \mathbb{S}^2)}= \|\mathcal{A}(T_{L}^{*}(\omega,{\rm{x}})+\tilde{T}_{L}^{*}(\omega,{\rm{x}}))-T^\circ(\omega,{\rm{x}})\|^{2}_{L_{2}(\Omega \times \mathbb{S}^2)}\\
  &=\|\mathcal{A}(T_{L}^{*}(\omega,{\rm{x}}))-T^\circ(\omega,{\rm{x}})\|^{2}_{L_{2}(\Omega \times \mathbb{S}^2)} \\ &\quad+\mathbb{E}\left[\int_{\mathbb{S}^2}\vert\mathcal{A}(\tilde{T}_{L}^{*}(\omega,{\rm{x}}))\vert^2d\sigma({\rm{x}})
   +2\int_{\mathbb{S}^2}(\mathcal{A}(T_{L}^{*}(\omega,{\rm{x}}))-T^\circ(\omega,{\rm{x}}))\mathcal{A}(\tilde{T}_{L}^{*}(\omega,{\rm{x}}))d\sigma({\rm{x}})\right]\\
  &=\|\mathcal{A}(T_{L}^{*}(\omega,{\rm{x}}))-T^\circ(\omega,{\rm{x}})\|^{2}_{L_{2}(\Omega \times \mathbb{S}^2)} \\
  &\quad+\mathbb{E}\left[\int_{\Gamma}\vert\tilde{T}_{L}^{*}(\omega,{\rm{x}})\vert^2d\sigma({\rm{x}})
  +2\int_{\Gamma}(T_{L}^{*}(\omega,{\rm{x}})-T^\circ(\omega,{\rm{x}}))\tilde{T}_{L}^{*}(\omega,{\rm{x}})d\sigma({\rm{x}})\right]\\
&=\|\mathcal{A}(T_{L}^{*}(\omega,{\rm{x}}))-T^\circ(\omega,{\rm{x}})\|^{2}_{L_{2}(\Omega \times \mathbb{S}^2)}
+\mathbb{E}[(\tilde{\alpha}^*(\omega))^H\tilde{Y}\tilde{\alpha}^*(\omega)]\\
& \quad+\mathbb{E}[2(\tilde{\alpha}^*(\omega))^HX^H\hat{\alpha}^*(\omega)-
2(\tilde{\alpha}^*(\omega))^H[\begin{array}{cc}
                                X^H & \tilde{Y} \\
                               \end{array}]\alpha^{\rm{obs}}(\omega)],
 \end{split}
\end{eqnarray}
where the first equality follows from the second equality in (\ref{def:infi:kkt}) with  $\nu_\omega>0$ for any $\omega\in\Omega$.
For any fixed $\omega\in\Omega$, by Definition \ref{def:kkt} for a scaled KKT point $\alpha^*\in\ell_{\beta}^p$ of problem (\ref{model2}),   there is $\nu_\omega >0$ such that
$$p\beta_{l}\|\alpha^*_{l\cdot}(\omega)\|^{p-2}\alpha^*_{l\cdot}(\omega)+2 \nu_\omega(Y_{l\cdot}\alpha^*(\omega)-\alpha_{l\cdot}^\circ(\omega)) = 0,\quad
l\in\gamma_\omega.$$
From (\ref{constraint}), for $\omega\in\Omega$,  $\alpha^\circ(\omega)=Y\alpha^{\rm{obs}}(\omega)$. Thus, for any fixed $\omega\in\Omega$,
\begin{equation}\label{eq:lem51}
p\beta_{l}\|\alpha^*_{l\cdot}(\omega)\|^{p}+2 \nu_\omega(\alpha^*_{l\cdot}(\omega))^H(Y_{l\cdot}\alpha^*(\omega)-Y_{l\cdot}\alpha^{\rm{obs}}(\omega)) = 0,\quad l\in\gamma_\omega.
\end{equation}
We obtain
\begin{equation}\label{eq:54}
\sum\limits_{l=L+1}^{\infty}p\beta_{l}\|\alpha^*_{l\cdot}(\omega)\|^{p}+2 \nu_\omega(\tilde{\alpha}^*(\omega))^H[
                                               \begin{array}{cc}
                                                 X^H & \tilde{Y} \\
                                               \end{array}](\alpha^*(\omega)-\alpha^{\rm{obs}}(\omega)) = 0, \, \omega\in\Omega.
\end{equation}
Thus,
\begin{eqnarray}\label{eq:55}
 \begin{split}
&  \mathbb{E}[(\tilde{\alpha}^*(\omega))^H\tilde{Y}\tilde{\alpha}^*(\omega)]+\mathbb{E}[2(\tilde{\alpha}^*(\omega))^HX^H\hat{\alpha}^*(\omega)
-2(\tilde{\alpha}^*(\omega))^H[\begin{array}{cc}
                                  X^H & \tilde{Y} \\
                               \end{array}]\alpha^{\rm{obs}}(\omega)] \\
&=  \mathbb{E}\left[(\tilde{\alpha}^*(\omega))^H\tilde{Y}\tilde{\alpha}^*(\omega)+2(\tilde{\alpha}^*(\omega))^HX^H\hat{\alpha}^*(\omega)
-2(\tilde{\alpha}^*(\omega))^H[\begin{array}{cc}
                                              X^H & \tilde{Y}
                                            \end{array}]\alpha^*(\omega)\right]\\
&\quad-\mathbb{E}\left[\frac{p}{\nu_\omega}\sum\limits_{l=L+1}^{\infty}\beta_{l}\|\alpha^*_{l\cdot}(\omega)\|^{p}
\right]\\
&= \mathbb{E}\left[-(\tilde{\alpha}^*(\omega))^H\tilde{Y}\tilde{\alpha}^*(\omega)-\frac{p}{\nu_\omega}\sum\limits_{l=L+1}^{\infty}\beta_{l}\|\alpha^*_{l\cdot}(\omega)\|^{p}\right]\\                                            &\geq \mathbb{E}\left[-\sum\limits_{l=L+1}^{\infty}\|{\alpha}_{l\cdot}^*(\omega)\|^{2}-\frac{p}{\nu_\omega}\sum\limits_{l=L+1}^{\infty}\beta_{l}\|\alpha^*_{l\cdot}(\omega)\|^{p}\right]
> -\epsilon,
 \end{split}
\end{eqnarray}
where the first equality follows from (\ref{eq:54}) and the first inequality follows from that $\|\tilde{Y}\|\leq1$.
Combining (\ref{eq:541}) and (\ref{eq:55}), we obtain
$$
0\leq\|\mathcal{A}(T_{L}^{*}(\omega,{\rm{x}}))-T^\circ(\omega,{\rm{x}})\|^{2}_{L_{2}(\Omega\times\mathbb{S}^2)}-
\varrho<\epsilon.
$$
The proof is completed.
\end{proof}

\section{Numerical experiments}
In this section,
we conduct numerical experiments to compare the $\ell_{p}$-$\ell_{2}$ optimization model (\ref{fini:cmodel}) with the  $\ell_{1}$ optimization model (31) in \cite{JD} on the inpainting of banded-limited random fields and images from Earth topography data to show the efficiency of problem (\ref{fini:cmodel}) and Algorithm \ref{alg:smoothnpg}.

Following \cite{chen2016}, we adapt the nonmonotone proximal gradient (NPG) method  to solve subproblem (\ref{penalty:smooth}) in Algorithm \ref{alg:smoothnpg}. For completeness, we present the NPG method as follows.
\begin{algorithm}{NPG method for problem (\ref{penalty:smooth})}
\label{alg:npg}
\begin{algorithmic}
\STATE {Given $\alpha^{0}\in\mathcal{F}_e$. Choose $M_{{\rm{max}}}\geq M_{{\rm{min}}}>0$, $\tilde{\eta}>1$, $b>0$ and an integer $N\geq0$. Set $n=0$.}
\STATE {(1) Choose $M_{n}^{0}\in[M_{{\rm{min}}},M_{{\rm{max}}}]$. Set $M_{n}=M_{n}^{0}$.\\
             \quad(a) Solve the subproblem
            $$ y\in\arg\min\limits_{\alpha\in\mathbb{C}^{d}}\left\{\Phi(\alpha)+2{\rm{Re}}\langle\partial_{\bar{\alpha}} f_{\lambda,\mu}(\alpha^n), \alpha-\alpha^n\rangle+M_{n}\|\alpha-\alpha^n\|^{2}\right\}.\vspace{-2mm}
             $$
             \quad(b) If
             $F_{\lambda,\mu}(y)\leq \max\limits_{[n-N]_{+}\leq j\leq n}F_{\lambda,\mu}(\alpha^j)-b\|y-\alpha^n\|^{2}
             $
             is satisfied, go to (2). Otherwise set $M_{n}= \tilde{\eta} M_{n}$, and go to step (a).}
\STATE{(2) Set $\alpha^{n+1}= y$,  $n= n+1$ and go to (1).}

\end{algorithmic}
\end{algorithm}

For the NPG method to solve (\ref{penalty:smooth}) in Algorithm \ref{alg:smoothnpg} at $\lambda=\lambda_k$ $\mu=\mu_k$, we set $M_{{\rm{min}}}=1$, $M_{{\rm{max}}}=10^6$, $\tilde{\eta}=2$, $b=10^{-4}$, $N=4$,
 $M_{0}^{0}=1$, and for any $n\geq1$,
 $$M_{n}^{0}=\min\left\{\max\left\{\frac{\vert(\alpha^{n}-\alpha^{n-1})^{H}(\partial_{\bar{\alpha}} F_{\lambda,\mu}(\alpha^{n})-\partial_{\bar{\alpha}} F_{\lambda,\mu}(\alpha^{n-1}))\vert}{\|\alpha^{n}-\alpha^{n-1}\|^{2}},1\right\},10^6\right\}.$$
Algorithm \ref{alg:npg} is terminated when
\begin{equation*}
  \|\alpha^{n}-\alpha^{n-1}\|_{\infty}\leq\sqrt{\varepsilon^k}\quad  {\rm{and}} \quad
  \frac{\vert F_{\lambda,\mu}(\alpha^{n})-F_{\lambda,\mu}(\alpha^{n-1})\vert}{\max\{1,\vert F_{\lambda,\mu}(\alpha^{n})\vert\}}\leq\min\{(\varepsilon^k)^{2.2},10^{-4}\}.
\end{equation*}
In Algorithm \ref{alg:smoothnpg}, we set $\lambda^0=20$, $\mu^0=\varepsilon^0=1$, $\varsigma_{1}=2$, $\varsigma_{2}=\frac{1}{2}$,  $\alpha^0=\hat{Y}^{-1}\hat{\alpha}^{\circ}$.
The smoothing penalty algorithm is terminated when
$\max\{g(\alpha^k)_+, 0.01\varepsilon^k \}\leq10^{-6},$
where $\varepsilon^k$ is updated by $\varepsilon^{k+1}=\max\{\varsigma_{2}\varepsilon^k,10^{-6}\}$ instead of $\varsigma_{2}\varepsilon^k$ in the experiments.
All codes were written in MATLAB and the realizations were implemented in Python.

\subsection{Random data}
In the following experiments, we consider randomly generated instances which are generated as follows. First, we choose a subset $D\subset\{0,1,\ldots,L-1\}$ and generate a group sparse coefficient vector $\alpha_{L}^{{\rm{true}}}\in\mathbb{C}^{(L+1)^2\times(L+1)^2}$ such that $\alpha^{{\rm{true}}}_{l\cdot}=0$ if $l\in D$ and $\alpha^{{\rm{true}}}_{l\cdot}={\alpha_{l\cdot}^{{\rm{cmb}}}}/{\|\alpha_{L}^{{\rm{cmb}}}\|^{1.5}}$ if $l\in D^c$,
where $\alpha_{L}^{{\rm{cmb}}}$ is the coefficient vector with maximum degree $L$ of the CMB 2018 map computed by the HEALPy package \cite{healpix}.

We assume that the noise $\Delta$ also has the K-L expansion and set $\varrho=\|\Delta\|_{L_{2}(\mathbb{S}^2)}^2$. We generate the data for the noise on the HEALPix points with $N_{\rm{side}}=2048$  by the MATLAB command: $\delta {\rm{randn}}(N_{\rm{pix}},1)$, where $N_{\rm{pix}}=12\times N_{\rm{side}}^2$ and $\delta>0$ is a scaling parameter. Then we use the Python HEALPy package to compute the coefficients of the noise.
The masks denoted by $\Gamma^c=\mathbb{S}^{2}{{\setminus}}\Gamma$ are shown in Figure \ref{masks}.
\begin{figure}[tbh]
\begin{center}
  \subfigure[$\Gamma^c_{1}$]{
  \includegraphics[width=2.6cm]{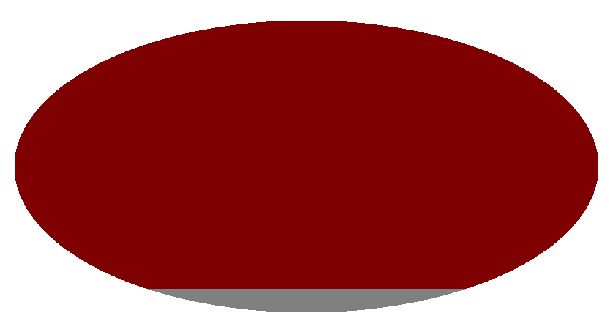}}
  \subfigure[$\Gamma^c_{2}$]{
  \includegraphics[width=2.6cm]{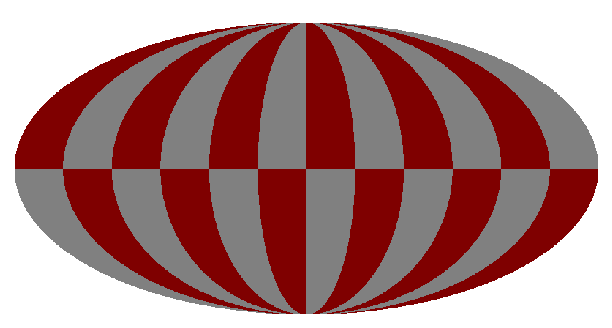}}
  \subfigure[$\Gamma^c_{3}$]{
  \includegraphics[width=2.6cm]{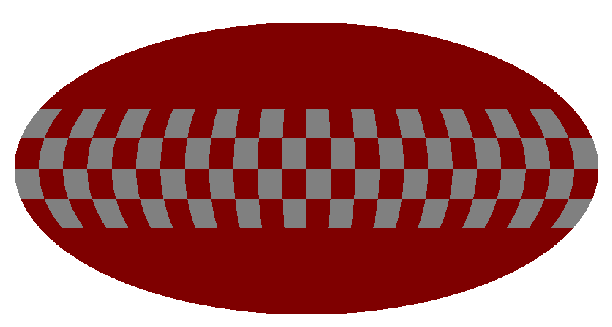}}
  \subfigure[$\Gamma^c_{4}$]{
  \includegraphics[width=2.6cm]{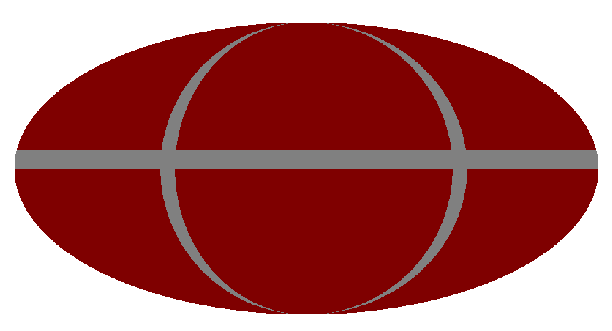}}
  \caption{ Masks (grey part).}\label{masks}
  \end{center}
\end{figure}

In the experiments,
we set $p=0.5$, $\beta_{l}=(1+10^{-4})^{l}l^{p}$ for $l\ge 1$  and $\beta_0=1$.
We compare  the $\ell_{p}$-$\ell_{2}$ optimization model (\ref{fini:cmodel}) by using Algorithm \ref{alg:smoothnpg} with the  $\ell_{1}$ optimization model (31) in \cite{JD} using the YALL1 method \cite{yang} under the KKM sampling scheme \cite{kh2014}.
We also compare our method with the  group SPGl1 method (https://friedlander.io/spgl1/).
We present the results in Tables \ref{table:inp1} and \ref{table:inp2}. Following \cite{chen2018},
${\rm{nnz}}:=\|\alpha_{L}^* \,\verb"&" \,\alpha^{{\rm{true}}}_{L}\|_{2,0}$  denotes the number of nonzero groups that $\alpha_{L}^*$ and $\alpha_{L}^{{\rm{true}}}$ have in common and ${\rm{false}}:=\|\alpha_{L}^* \|_{2,0}-\|\alpha_{L}^*\, \verb"&"\,  \alpha^{{\rm{true}}}_{L}\|_{2,0}$ denotes the number of ``false positives" where
$\alpha^{{\rm{true}}}_{l\cdot}$ is a zero vector, but $\alpha_{l\cdot}^* $ is a nonzero vector. The  signal-to-noise ratio (SNR) and relative error are defined by
\begin{equation*}\label{relerr}
 {\rm{SNR}}=20\times\log_{10}\tfrac{\|x^{\rm{true}}\|}{\|x^{\rm{true}}-x^*\|}, \quad {\rm{RelErr}}:=\tfrac{\|\alpha_{L}^*-\alpha_{L}^{{\rm{true}}}\|}{\|\alpha_{L}^{{\rm{true}}}\|} ,
\end{equation*}
respectively,
where $x^{\rm{true}}=(T^{\rm{true}}(\mathrm{x}_{1}),\ldots,T^{\rm{true}}(\mathrm{x}_{n}))^T$ and $x^*=(T^*(\mathrm{x}_{1}),\ldots,T^*(\mathrm{x}_{n}))^T$ are estimated on the HEALPix points with $n=12\times 2048^2$ and  $\alpha_{L}^*$ is the terminating solution,.

From Tables \ref{table:inp1} and \ref{table:inp2}, we can see that our optimization model (\ref{fini:cmodel}) using Algorithm \ref{alg:smoothnpg}  achieves smaller relative errors and higher SNR values than the  $\ell_{1}$ optimization model (31) in \cite{JD}.
Although the group SPGl1 method archives small relative errors for some experiments, the SNR values is smaller than ours.
Moreover, compared with the $\ell_{1}$ optimization model {and group SPGl1}, our  optimization model (\ref{fini:cmodel}) can exactly recover the number and positions of nonzero groups of $\alpha_{L}^*$.

We select some numerical results from Table \ref{table:inp1} with  $L=50$ and $\|\alpha_{L}^{{\rm{true}}}\|_{2,0}=26$   to show the inpainting quality in Figure \ref{example}. We observe that our model using Algorithm \ref{alg:smoothnpg} achieves smaller pointwise errors than the $\ell_{1}$ optimization model (31) in \cite{JD} and group SPGl1 method.

\begin{table} [H]
\scriptsize
\begin{center}
\caption{Numerical results for the inpainting of band-limited random fields with $\delta=1$.}\label{table:inp1}
\setlength{\tabcolsep}{0.6mm}{
\begin{tabular}{|cc|c|ccc|r|c|ccc|r|c|ccc|r|}
  \hline
          &   &  \multicolumn{5}{c|}{ Algorithm \ref{alg:smoothnpg}} & \multicolumn{5}{c|}{ YALL1}& \multicolumn{5}{c|}{ {Group SPGl1}}\\ \cline{3-17}
  Mask & $\|\alpha^{{\rm{true}}}_{L}\|_{2,0}$ &
  RelErr  &    $\|\alpha^{*}_{L}\|_{2,0}$ &${\rm{nnz}}$ & ${\rm{false}}$   &SNR &
  RelErr  &    $\|\alpha^{*}_{L}\|_{2,0}$ &${\rm{nnz}}$ & ${\rm{false}}$  &SNR &
  RelErr  &    $\|\alpha^{*}_{L}\|_{2,0}$ &${\rm{nnz}}$ & ${\rm{false}}$  &SNR\\ \hline
     \multicolumn{17}{|c|}{$L=35$} \\ \hline
  $\Gamma_{1}^c$    &  7 &  0.0017 & 7   & 7 & 0  & 55.54 & 0.511 & 14& 6 & 8 &5.82 & 0.0023 & 12 & 7 & 5 & 52.65\\
                  & 11 &  0.0012 &  11 & 11& 0  & 58.28 & 0.575 & 13& 9 & 4 &4.80 & 0.0788 &  23& 11 & 12 &56.19\\
                  & 19 &  0.0013 &  19 & 19& 0  & 57.73 & 0.374 & 25& 17& 8 &8.55 & 0.3965 & 29 & 19 & 10 &46.25\\\hline
  $\Gamma_{2}^c$    & 7  &  0.0023 &  7  & 7 & 0  & 52.47 & 0.668 & 19& 7 & 12&3.08 & 0.0043 & 17 & 7 & 10 & 47.29\\
                  & 11 &  0.0022 &  11 & 11& 0  & 53.19 & 0.651 & 25& 11& 14&2.67 & 0.0788 & 32 & 11 & 21 &22.07\\
                  & 19 &  0.0059 &  19 & 19& 0  & 44.54 & 0.683 & 26& 19& 7 &1.87 & 0.3965 & 33 & 19 & 14 & 8.03\\
  \hline
  $\Gamma^c_{3}$    & 7  &  0.0019 &  7  & 7 & 0  & 54.60 & 0.378 & 21& 7 & 14&8.50 & 0.0035 & 16 & 7 & 9 &49.01\\
                  & 11 &  0.0014 &  11 & 11& 0  & 57.30 & 0.357 & 13& 11& 2 &8.89 & 0.0024 & 26 & 11 & 15 &52.57\\
                  & 19 &  0.0015 &  19 & 19& 0  & 56.28 & 0.321 & 25& 19& 6 &9.70 & 0.1195 & 34 & 19 & 15& 18.45\\
  \hline
  $\Gamma^c_{4}$    & 7  &  0.0017 &  7  & 7 & 0  & 55.33 & 0.351 & 16& 6 & 10&8.91 & 0.0022 & 12 & 7 & 5 &52.65\\
                  & 11 &  0.0013 &  11 & 11& 0  & 57.72 & 0.321 & 11& 9 & 2 &10.78& 0.0019 & 23 & 11 & 12 &54.29\\
                  & 19 &  0.0014 &  19 & 19& 0  & 56.84 & 0.273 & 22& 17 & 5 &11.25& 0.0073 & 32 & 19 & 13 &42.76\\
  \hline
     \multicolumn{17}{|c|}{$L=50$} \\ \hline
  $\Gamma^c_{1}$    &  6 &  0.0027 & 6   & 6 & 0  & 51.43 & 0.551 & 28& 5 & 23 &5.18 &  0.0030 & 13 & 6 & 7 & 50.40\\
                  & 16 &  0.0024 &  16 & 16& 0  & 52.43 & 0.591 & 33& 13& 20 &4.57 &  0.0024 & 33 & 16 & 17 & 52.29\\
                  & 26 &  0.0031 &  26 & 26& 0  & 50.21 & 0.511 & 35& 23& 12 &5.84 &  0.0038 & 37 & 26 & 11 & 48.50\\\hline
  $\Gamma_{2}^c$    & 6  &  0.0032 &  6  & 6 & 0  & 49.84 & 0.676 & 39& 6 & 33 &2.83 &  0.0044 & 15 & 6 & 9 &47.04\\
                  & 16 &  0.0044 &  16 & 16& 0  & 47.15 & 0.643 & 43& 16& 27 & 2.68 & 0.0729 & 45 & 16 & 29 &22.74\\
                  & 26 & 0.0090  & 26  &26 & 0  & 40.93 & 0.664 & 42& 26& 16 &2.73 &  0.3549 & 47 & 26 & 21 &8.99\\
  \hline
  $\Gamma_{3}^c$    & 6  &  0.0027 &  6  & 6 & 0  & 51.43 & 0.212 & 41& 6 & 35 &13.21 & 0.0036 & 14 & 6 & 8 &48.89\\
                  & 16 &  0.0024 &  16 & 16& 0  & 52.43 & 0.191 & 37& 16& 21 &14.61 &  0.0041 & 38 & 16 & 22 &47.72\\
                  & 26 &  0.0031 &  26 & 26& 0  & 50.21 & 0.298 & 38& 26& 12 &10.50 & 0.1393 & 48 & 26 & 22 &17.11\\
  \hline
  $\Gamma_{4}^c$    & 6  &  0.0027 &  6  & 6 & 0  & 51.25 & 0.197 & 37& 6 & 31 &16.42 & 0.0031 & 14 & 6 & 8 &50.14\\
                  & {{16}} &  0.0023 &  16 & 16& 0  & 52.47 & 0.193 & 37& 16& 21 &14.21 &  0.0030 & 32 & 16 & 16 &50.51\\
                  & 26 &  0.0030 &  26 & 26& 0  & 50.55 & 0.239 & 37& 25& 12 &13.05 & 0.0078 & 44 & 26 & 18 &42.21\\
  \hline
  \end{tabular}
  }
\end{center}
\end{table}

\begin{table} [H]
\scriptsize
\begin{center}
\caption{Numerical results for the inpainting of band-limited random fields with $\delta=0.1$.}\label{table:inp2}
\setlength{\tabcolsep}{0.6mm}{
\begin{tabular}{|cc|c|ccc|r|c|ccc|r|c|ccc|r|}
  \hline
          &   &  \multicolumn{5}{c|}{ Algorithm \ref{alg:smoothnpg}} & \multicolumn{5}{c|}{ YALL1}& \multicolumn{5}{c|}{ {Group SPGl1}}\\ \cline{3-17}
  Mask & $\|\alpha^{{\rm{true}}}_{L}\|_{2,0}$ &
  RelErr  &    $\|\alpha^{*}_{L}\|_{2,0}$ &${\rm{nnz}}$ & ${\rm{false}}$   &SNR &
  RelErr  &    $\|\alpha^{*}_{L}\|_{2,0}$ &${\rm{nnz}}$ & ${\rm{false}}$  &SNR &
   RelErr  &    $\|\alpha^{*}_{L}\|_{2,0}$ &${\rm{nnz}}$ & ${\rm{false}}$  &SNR\\ \hline
      \multicolumn{17}{|c|}{$L=35$} \\ \hline
  $\Gamma^c_{1}$    &  7 &  1.90e-4 & 7   & 7 & 0  & 74.42 & 0.551 & 6 &  6& 0 &5.83 &2.88e-4 & 11 & 7 & 4 & 70.78\\
                  & 11 &  1.27e-4 &  11 & 11& 0  & 77.89 & 0.591 & 12& 9 & 3 &4.80 &1.61e-4 &21 &11 &10 & 75.82\\
                  & 19 &  1.39e-4 &  19 & 19& 0  & 77.13 & 0.511 & 25& 17& 8 &8.55 & 0.0035 &33 &19 & 14& 49.18\\\hline
  $\Gamma^c_{2}$    & 7  &  2.58e-4 &  7  & 7 & 0  & 71.73 & 0.676 & 19& 19& 0 &3.08 & 5.24e-4& 17& 7& 10& 65.60\\
                  & 11 &  2.35e-4 &  11 & 11& 0  & 72.57 & 0.643 & 21& 11& 10&2.67 & 0.0859 &34 & 11&23 & 21.32\\
                  & 19 &  6.89e-4 &  19 & 19& 0  & 63.22 & 0.664 & 10&  7&  3&1.87 &0.3949  & 33& 19&14 &8.06 \\
  \hline
  $\Gamma^c_{3}$    & 7  &  2.08e-4 &  7  & 7 & 0  & 73.60 & 0.212 & 16&  7& 9 &8.50 &4.05e-4 & 15&7 &8 & 67.84\\
                  & 11 &  1.40e-4 &  11 & 11& 0  & 77.05 & 0.191 & 14& 11& 3 &8.89 &2.54e-4 &25 &11 &14 & 71.89\\
                  & 19 &  1.70e-4 &  19 & 19& 0  & 75.37 & 0.298 & 21& 19& 2 &9.70 &0.1172  &35 & 19& 16&18.62 \\
  \hline
  $\Gamma^c_{4}$    & 7  &  1.97e-4 &  7  & 7 & 0  & 74.09 & 0.197 & 11& 6 & 5 &8.91 &2.65e-4 &11 & 7& 4& 71.50\\
                  & 11 &  1.28e-4 &  11 & 11& 0  & 77.83 & 0.193 & 9 & 9 & 0 &10.78 &1.93e-4 &22 &11 &11 & 74.25\\
                  & 19 &  1.56e-4 &  19 & 19& 0  & 76.13 & 0.239 & 20&17 & 3 &11.26 &0.0046 &35 &19 & 16&46.66 \\
  \hline
     \multicolumn{17}{|c|}{$L=50$} \\ \hline
  $\Gamma^c_{1}$    &  6 &  2.56e-4 & 6   & 6 & 0 & 71.81 & 0.551 &13 & 5 &8  &5.18 &3.10e-4 &11 &6 &5 & 70.17\\
                  & 16 &  2.15e-4 &  16 & 16& 0 & 73.32 & 0.591 &18 & 13& 5 &4.57 &2.67e-4 & 27&16 &11 & 71.44\\
                  & 26 &  2.46e-4 &  26 & 26& 0 & 72.17 & 0.511 &35 & 22 &12 &5.84 &3.94e-4 &35 &26 & 9& 68.08\\\hline
  $\Gamma^c_{2}$    & 6  &  3.18e-4 & 6   & 6 & 0 & 69.92 & 0.676 &16 &6  &10  &2.83 &4.48e-4 &16 & 6& 10&  66.96\\
                  & 16 &  5.09e-4 & 16  & 16& 0 & 65.85 & 0.643 &36 &16 &20  &2.68 &0.0829 & 45&16 &29 & 21.63\\
                  & 26 &  9.97e-4 & 26   & 26& 0 & 60.01 & 0.664 &32 &26 & 6 &2.73 &0.3562 &47 &26 &21 & 8.98\\
  \hline
  $\Gamma^c_{3}$    & 6  &  2.97e-4 &  6  & 6 & 0  & 70.52 & 0.212 &30 & 6 &24 &13.21&3.84e-4 & 13&6 &7 & 68.31\\
                  & 16 &  2.70e-4 &  16 & 16& 0  & 71.34 & 0.191 &24 &16 & 8 &14.61&4.59e-4 &34 &16 &18 & 66.75 \\
                  & 26 &  3.28e-4 &  26 & 26& 0  & 69.65 & 0.298 &31 & 26& 5 &10.50&0.1387 &50 & 26&24 &17.15 \\
  \hline
  $\Gamma^c_{4}$    & 6  &  2.86e-4 &  6  & 6 & 0  & 70.85 & 0.197 &23 & 6 & 17 &16.42&3.43e-4 & 11& 6& 5& 69.29\\
                  &16 &  2.56e-4 &  16 & 16& 0  & 71.82 & 0.193 &24 & 16& 8 &14.21 & 3.27e-4&28 &16 & 12&69.69 \\
                  & 26 &  3.09e-4 &  26 & 26& 0  & 70.19 & 0.239 &29 &25 & 4 &13.05& 8.06e-4& 43& 26& 17& 61.86\\
  \hline
  \end{tabular}
  }
\end{center}
\end{table}

\begin{figure}[H]
  \centering
  \subfigure[Observed fields with $L=50$, $\|\alpha^{\rm{true}}_{L}\|_{2,0}=6$ and $\delta=1$ (masked by $\Gamma^c_1$, $\Gamma^c_2$, $\Gamma^c_3$ and $\Gamma^c_4$ from left to right)]{
  \includegraphics[width=13cm]{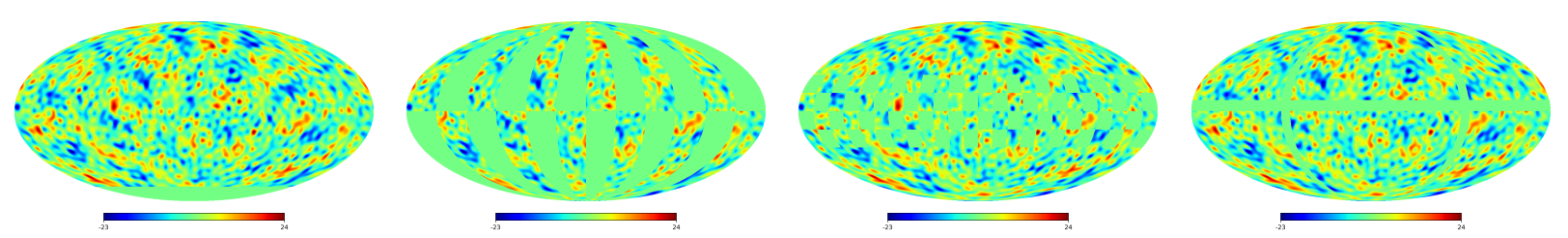}}\\
  \subfigure[Inpainted fields by Algorithm \ref{alg:smoothnpg}]{
  \includegraphics[width=13cm]{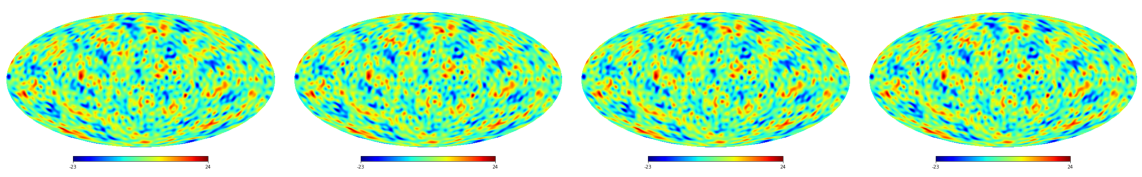}}\\
  \subfigure[Pointwise error of inpainted fields in (b) by Algorithm \ref{alg:smoothnpg}]{
  \includegraphics[width=13cm]{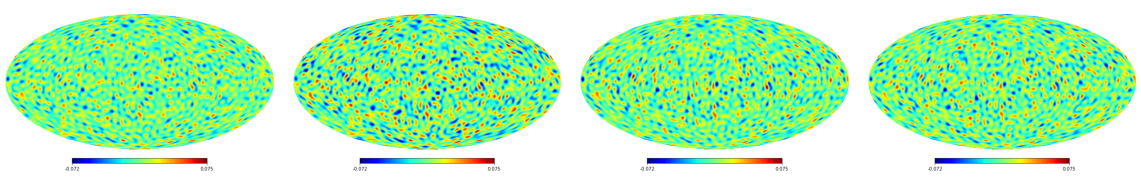}}\\
  \subfigure[Inpainted fields by YALL1]{
  \includegraphics[width=13cm]{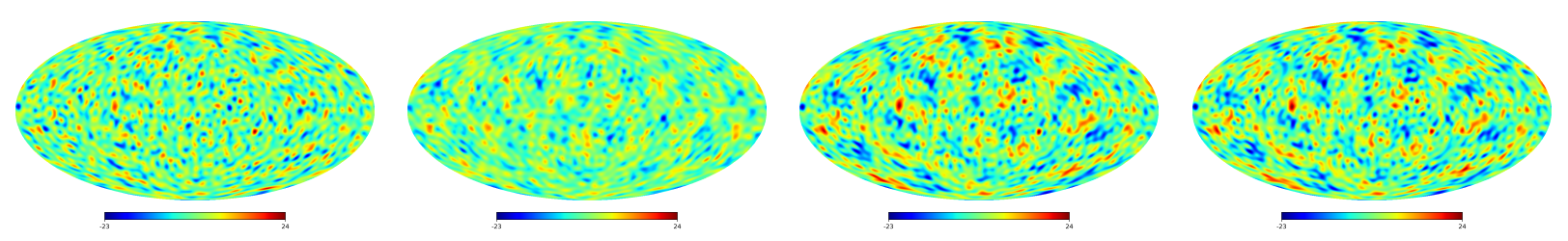}}\\
  \subfigure[Pointwise error of inpainted fields in (d) by YALL1]{
  \includegraphics[width=13cm]{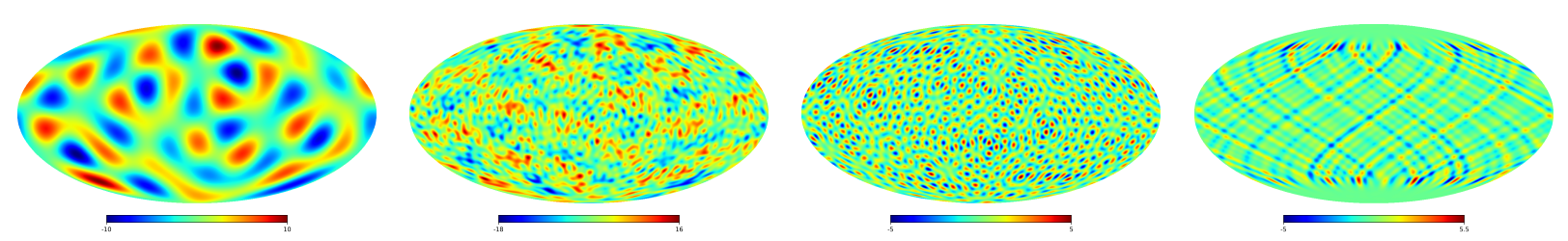}}\\
  \subfigure[{Inpainted fields by group SPGl1}]{
  \includegraphics[width=13cm]{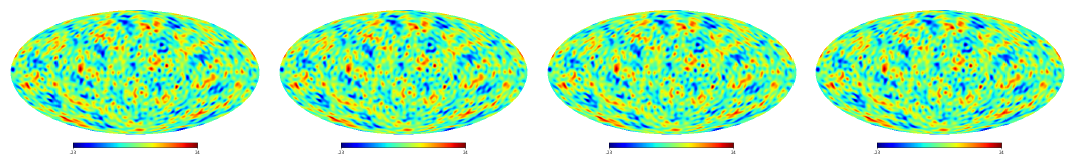}}\\
  \subfigure[{Pointwise error of inpainted fields in (f) by group SPGl1}]{
  \includegraphics[width=13cm]{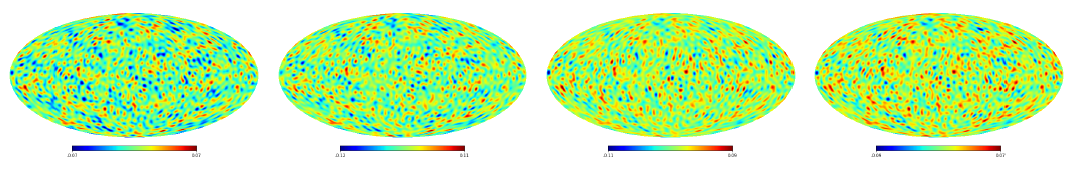}}\\
  \caption{Inpainting results.}\label{example}
\end{figure}

To give some insight for the choice of $L$ of the truncated space, we conduct experiments on a random field with different $L$ under the noiseless case.
We choose the mask $\Gamma_4$ and  the observed field $T^{\circ}$ with degree $50$.
 In Figure \ref{error}(a) we show the approximation error $\|\mathcal{A}(T_{L}^{*})-T^\circ\|^{2}_{L_{2}( \mathbb{S}^2)}$ on a logarithmic scale.
We can see that the error decreases as $L$ increases and the errors are same for $L=48,49,50$. Moreover, $\alpha^*_{49\cdot}$ and $\alpha^*_{50\cdot}$ are zero groups of $\alpha_L^*$ for $L=50$.
From our discussion in Section 2 and Theorem 5.2, we can guess that the value of $L$ for (a) is $48$.
We plot the error $\|T_{50}^{*}-T_L^*\|^{2}_{L_{2}( \mathbb{S}^2)}$ for different $L$ in Figure \ref{error}(b).
We can observe that the error decreases as $L$ increases and the errors  are zero for $L=48,49,50$ due to $\alpha^*_{49\cdot}=0$ and $\alpha^*_{50\cdot}=0$.
In Figure \ref{error} (c) and (d), we plot the values of $\|\alpha_{l\cdot}^*\|$ of $T^*_L$  with different $ L$ to give some insight of the sparsity pattern of $\alpha^*$.
\begin{figure}[H]
  \centering
  \subfigure[$\|\mathcal{A}(T_{L}^*)-T^{\circ}\|^2_{L_{2}(\mathbb{S}^2)}$]{
  \includegraphics[width=7cm]{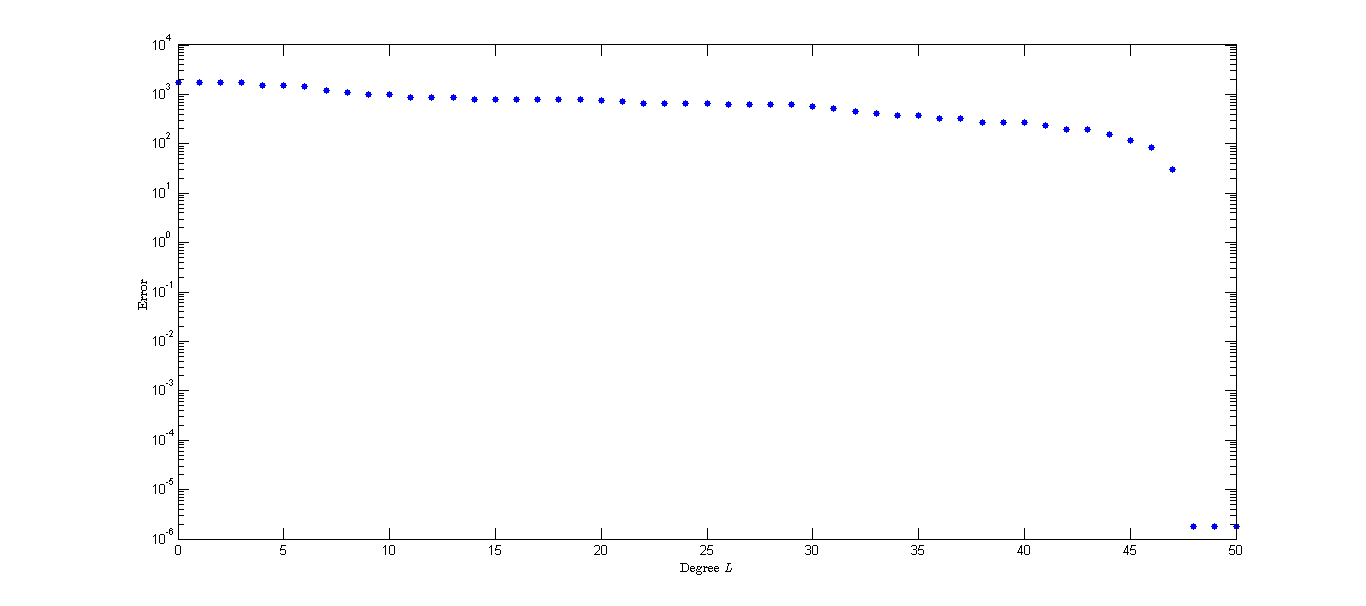}}
  \subfigure[$\|T_{50}^*-T^{*}_L\|^2_{L_{2}(\mathbb{S}^2)}$]{
  \includegraphics[width=7cm]{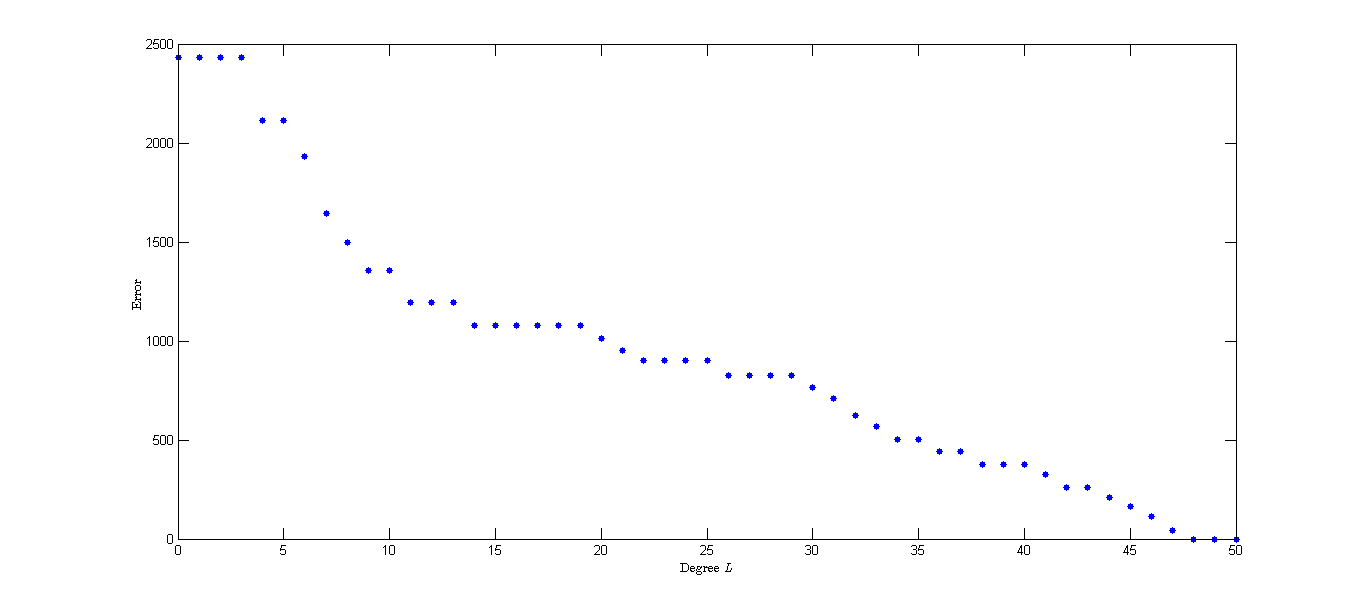}}\\
  \subfigure[$\|\alpha_{l\cdot}^*\|$]{
  \includegraphics[width=7cm]{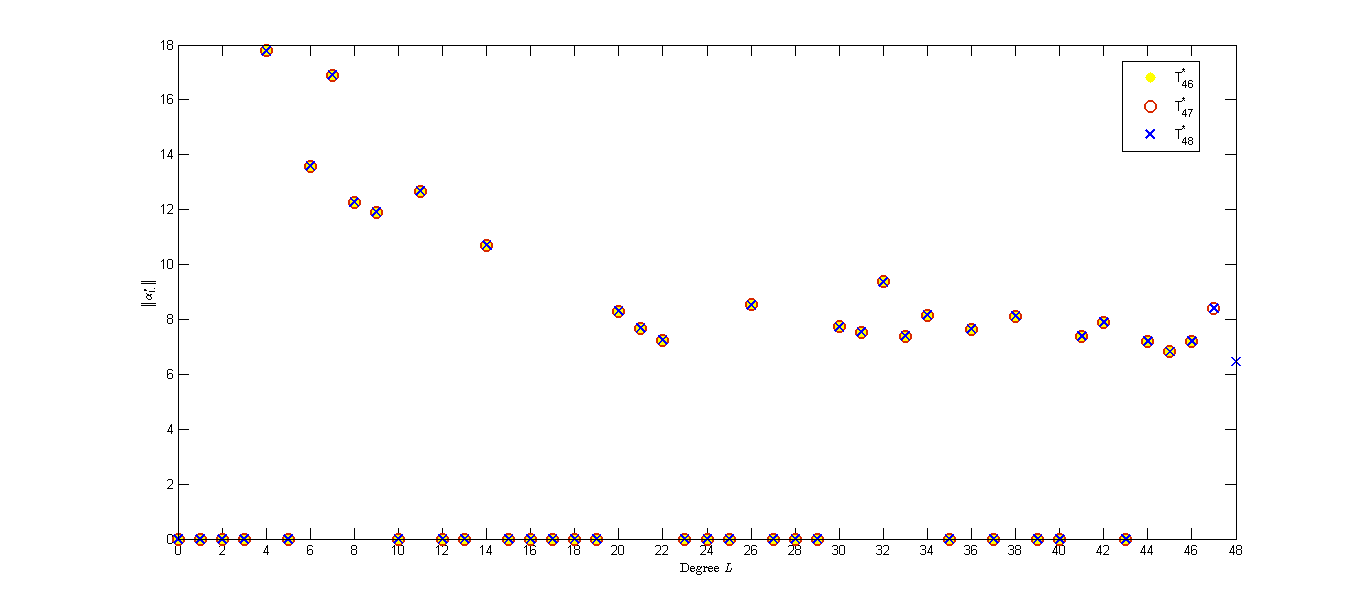}}
  \subfigure[$\|\alpha_{l\cdot}^*\|$]{
  \includegraphics[width=7cm]{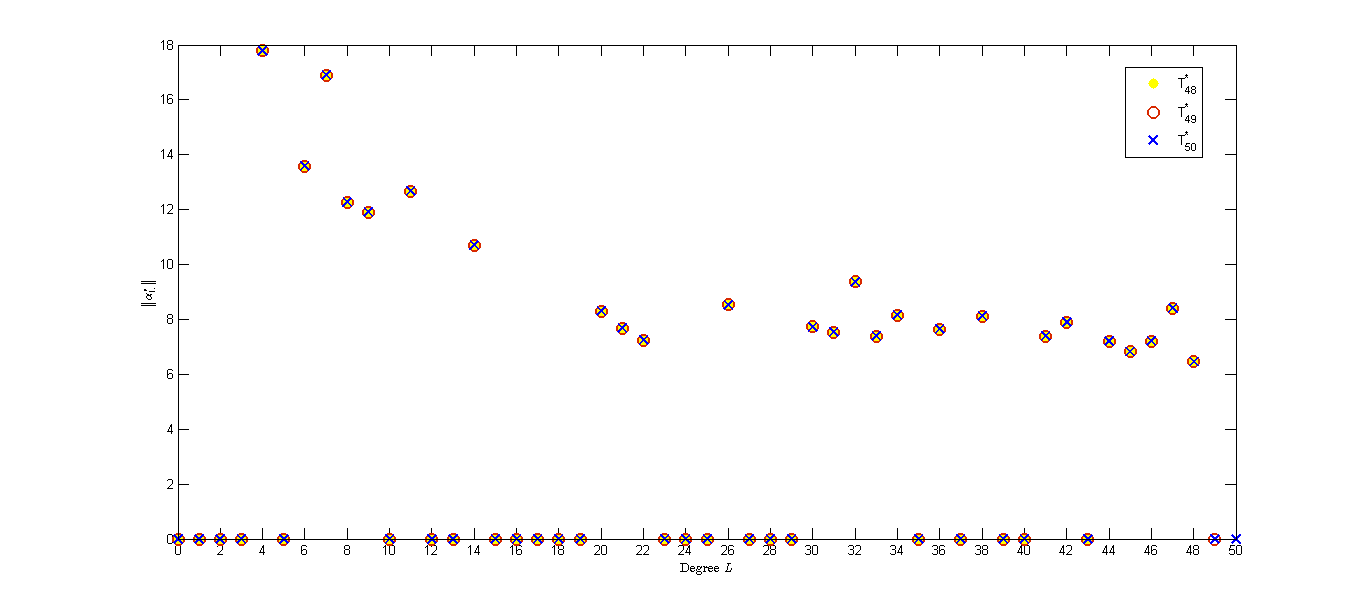}}
  \caption{{Approximation errors and sparsity pattern.}}\label{error}
\end{figure}
\subsection{Real image}
We consider the inpainting of images from Earth topographic data, taken from Earth Gravitational Model (EGM2008)
publicly released by the U.S. National Geospatial-Intelligence
Agency (NGA) EGM Development Team\footnote{http://geoweb.princeton.edu/people/simons/DOTM/Earth.mat}.
We generate four resolution test images from the
Earth topography data which are shown in Figures \ref{earth35}(a), \ref{earth50}(a), \ref{earth80}(a) and \ref{earth128}(a).
For each resolution test image we consider a mask which is shown in Figures \ref{earth35}(d), \ref{earth50}(d), \ref{earth80}(d), and \ref{earth128}(d) respectively. For all the images we set $\delta=1$. The recovered images and pointwise errors are presented in Figures \ref{earth35}, \ref{earth50}, \ref{earth80} and \ref{earth128}. We can see that model (\ref{fini:cmodel}) using Algorithm \ref{alg:smoothnpg} yields higher quality images with smaller pointwise errors and higher SNR values  than the $\ell_{1}$ optimization model (31) in \cite{JD} and the group SPGl1 method under different resolutions.
\begin{figure}[H]
  \centering
  \subfigure[True]{
 \includegraphics[width=3cm]{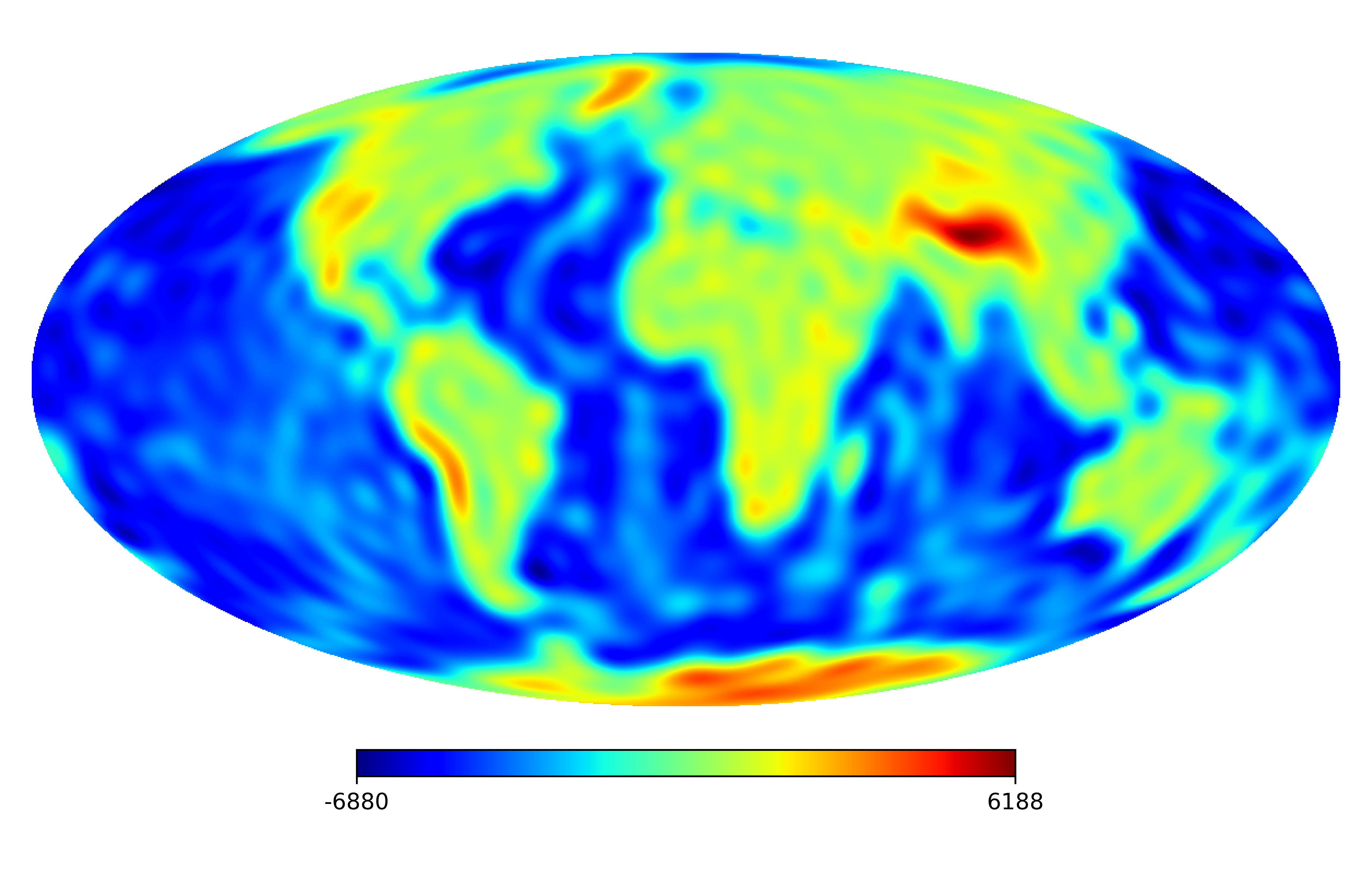}}
  \subfigure[ Algorithm \ref{alg:smoothnpg}]{
  \includegraphics[width=3cm]{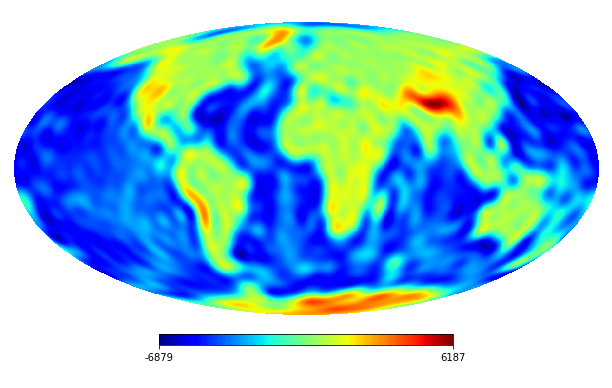}}
  \subfigure[ YALL1]{
  \includegraphics[width=3cm]{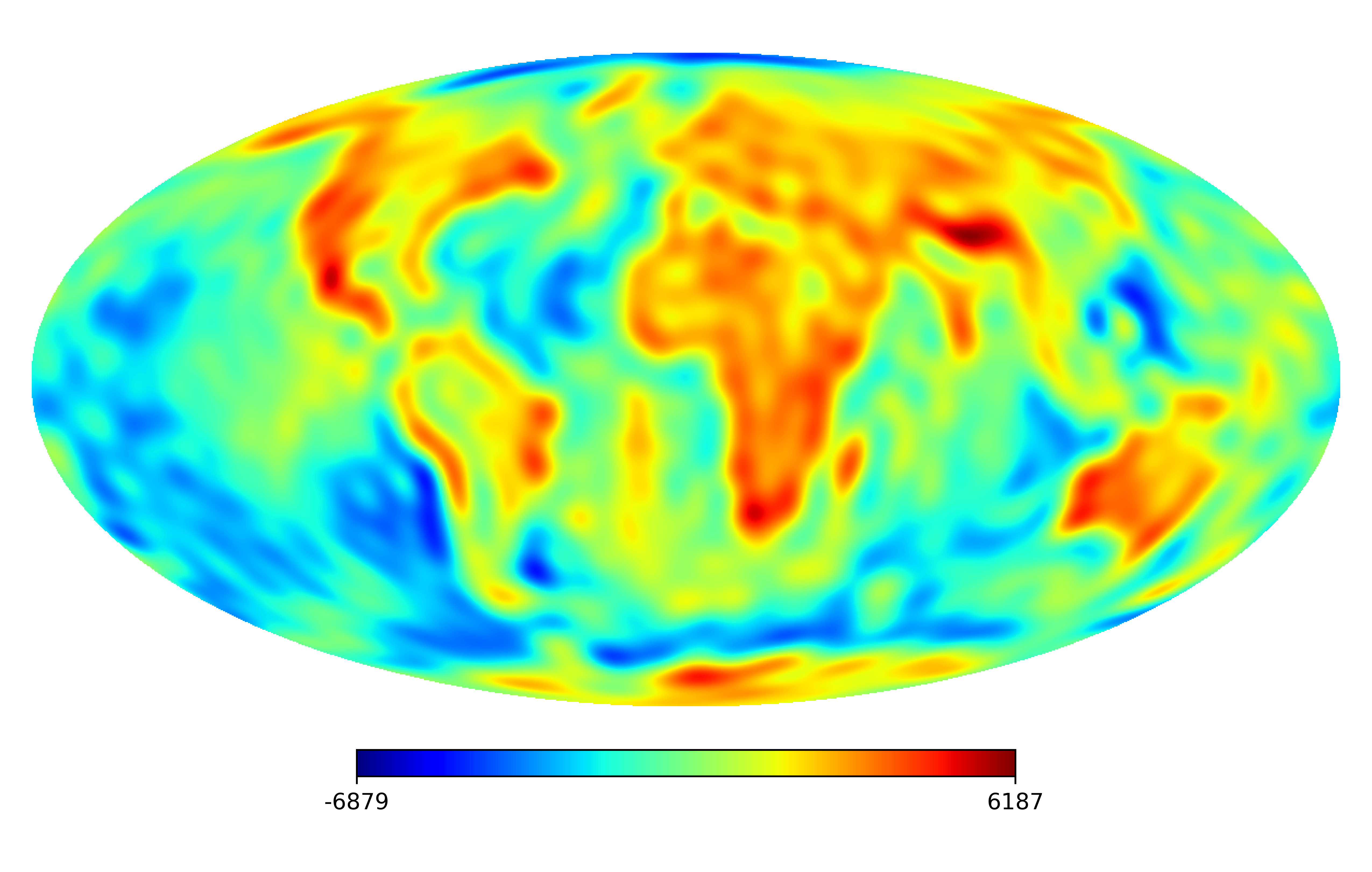}}
  \subfigure[ group SPGl1]{
  \includegraphics[width=3cm]{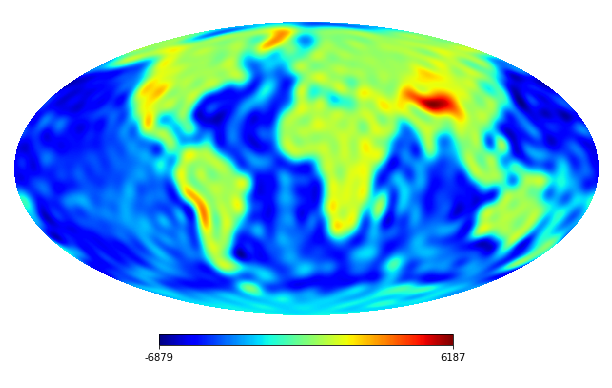}}\\
  \subfigure[Observed]{
  \includegraphics[width=3cm]{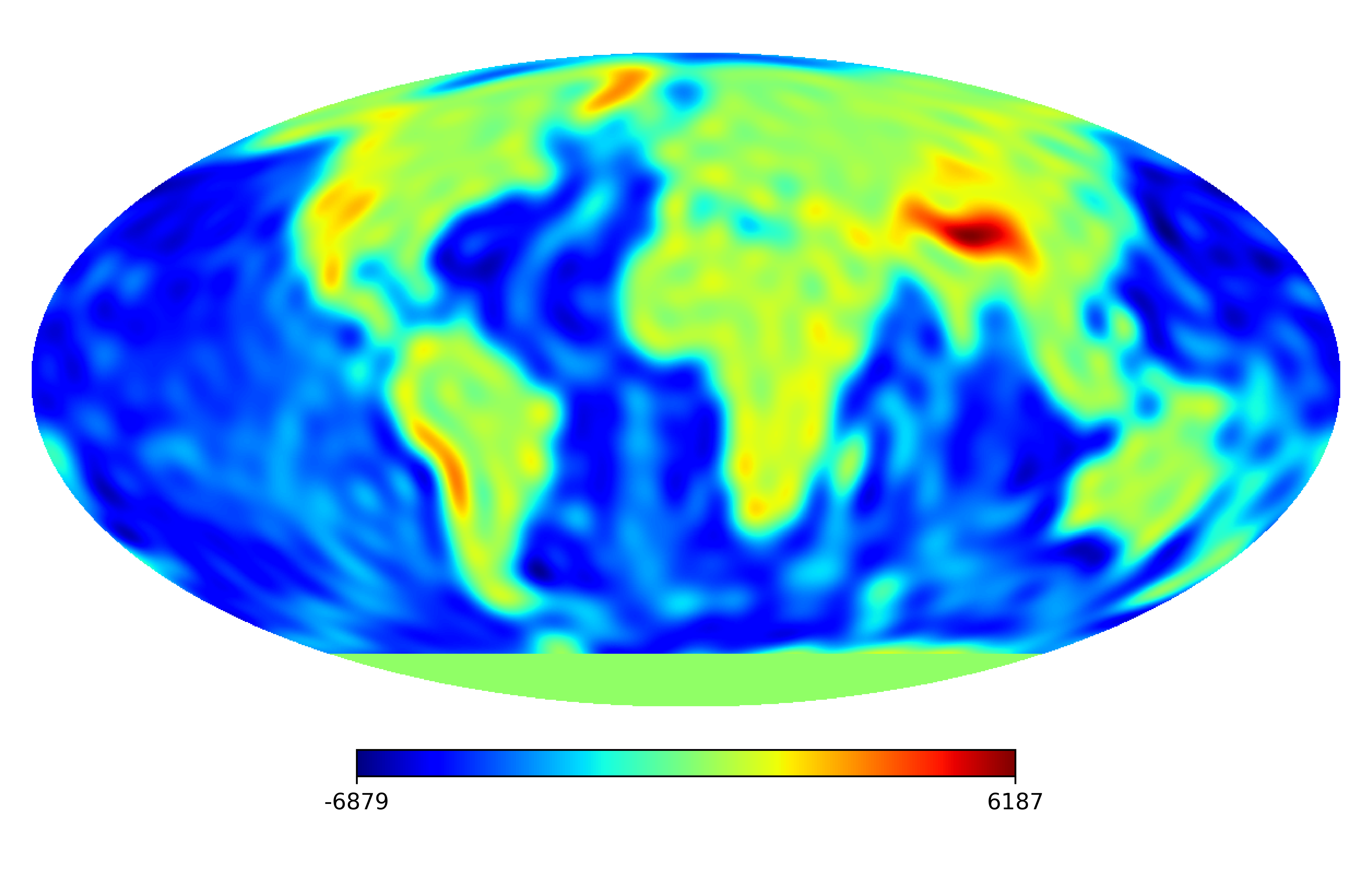}}
  \subfigure[Error of (b)]{
  \includegraphics[width=3cm]{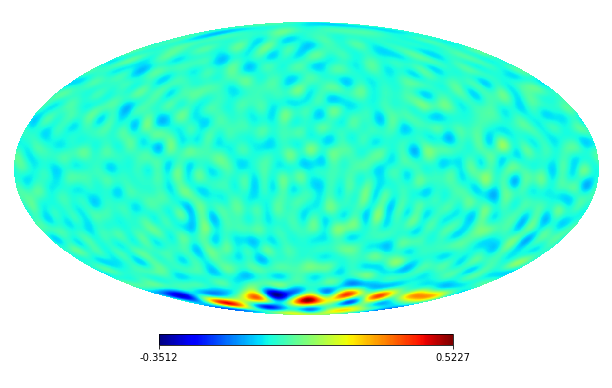}}
  \subfigure[Error of (c)]{
  \includegraphics[width=3cm]{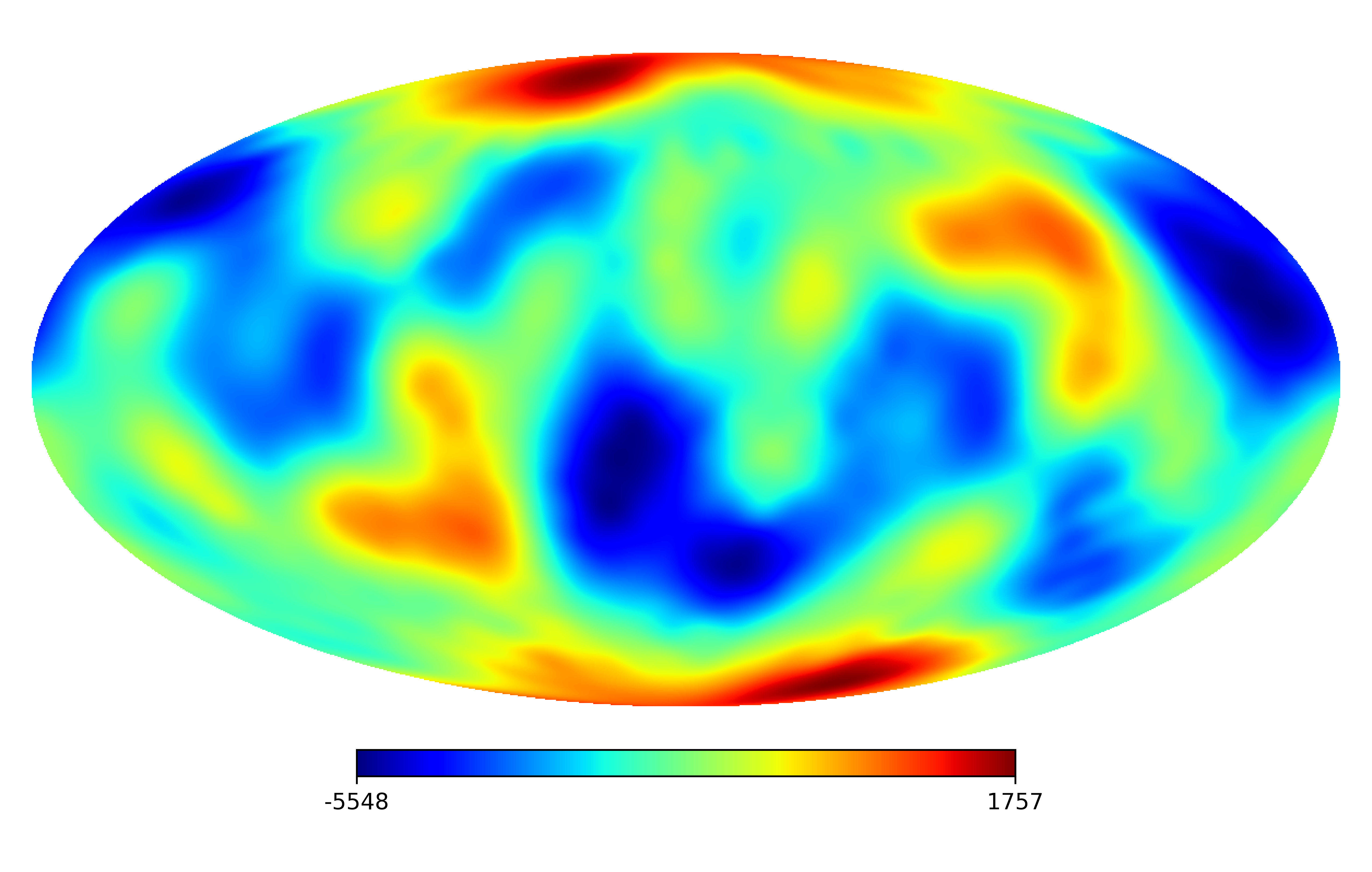}}
  \subfigure[Error of (d)]{
  \includegraphics[width=3cm]{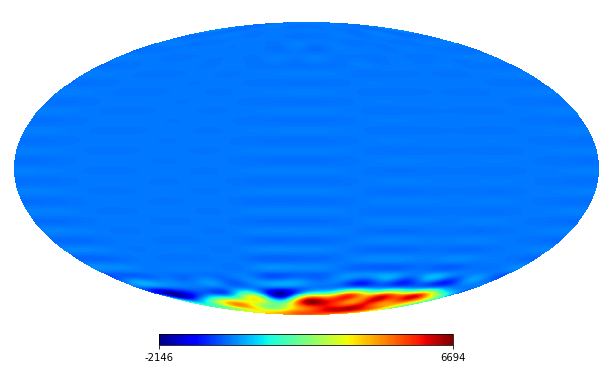}}
  \caption{ Test images of Earth topographic data constructed to be band-limited at $L=35$. The SNR of (b) is 97.30 and the SNR of (c) is 1.81 {and the SNR of (d) is 13.04}.}\label{earth35}
\end{figure}

\begin{figure}[h]
  \centering
  \subfigure[True]{
 \includegraphics[width=3cm]{earthfigure/cap35/earth_true_35.png}}
  \subfigure[ Algorithm \ref{alg:smoothnpg}]{
  \includegraphics[width=3cm]{earthfigure/cap35/earth35penaltyrecover.png}}
  \subfigure[ YALL1]{
  \includegraphics[width=3cm]{earthfigure/cap35/earth_reyall35cap2.png}}
  \subfigure[ group SPGl1]{
  \includegraphics[width=3cm]{earthfigure/cap35/earth35spgl1recover.png}}\\
  \subfigure[Observed]{
  \includegraphics[width=3cm]{earthfigure/cap35/earth_obs_35cap2.png}}
  \subfigure[Error of (b)]{
  \includegraphics[width=3cm]{earthfigure/cap35/earth35penaltydiff.png}}
  \subfigure[Error of (c)]{
  \includegraphics[width=3cm]{earthfigure/cap35/earth_diffyall_35cap2.png}}
  \subfigure[Error of (d)]{
  \includegraphics[width=3cm]{earthfigure/cap35/earth35spgl1diff.png}}
  \caption{ Test images of Earth topographic data constructed to be band-limited at $L=35$. The SNR of (b) is 97.30 and the SNR of (c) is 1.81 {and the SNR of (d) is 13.04}.}\label{earth35}
\end{figure}

\begin{figure}[h]
  \centering
  \subfigure[Ture]{
  \includegraphics[width=3cm]{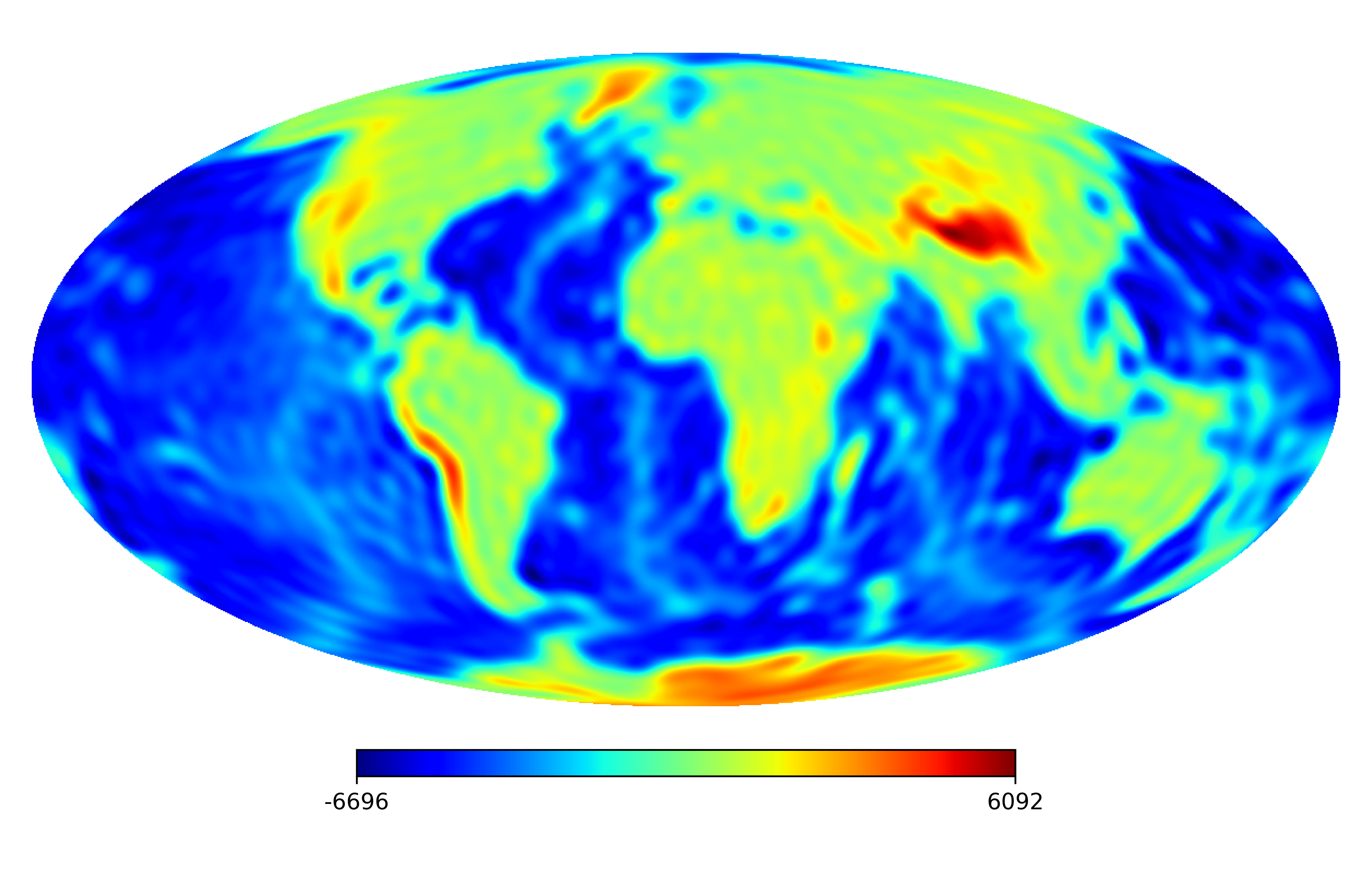}}
  \subfigure[ Algorithm \ref{alg:smoothnpg}]{
  \includegraphics[width=3cm]{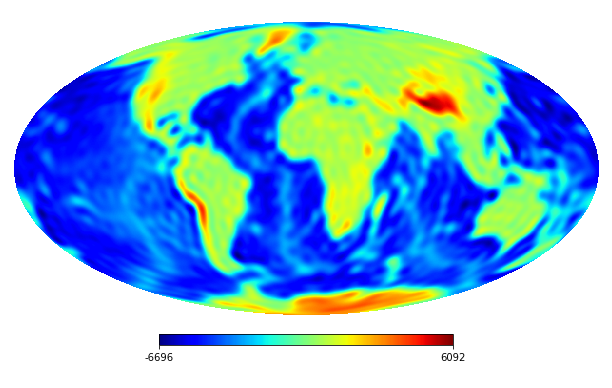}}
  \subfigure[ YALL1]{
  \includegraphics[width=3cm]{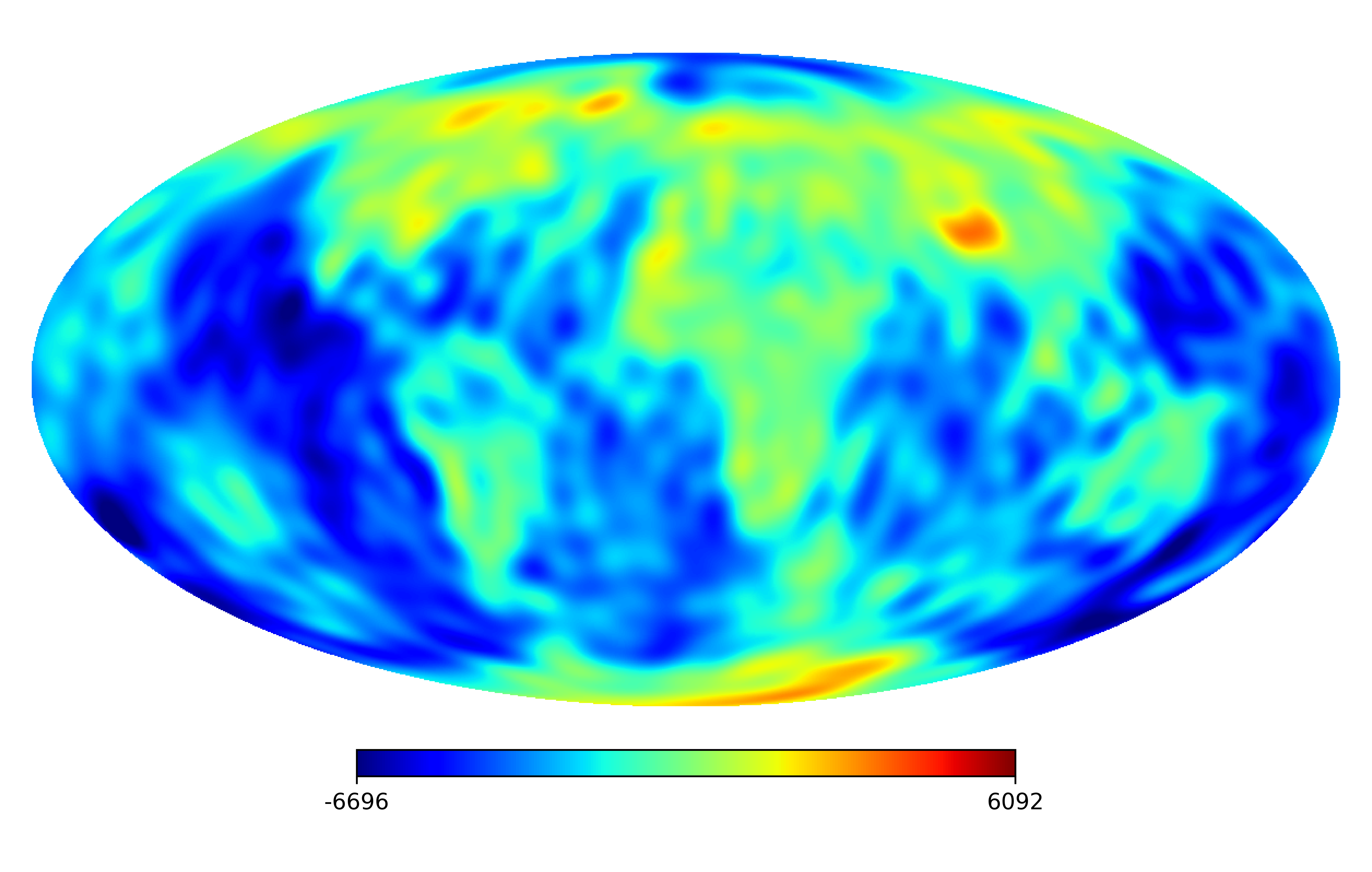}}
  \subfigure[{ group SPGl1}]{
  \includegraphics[width=3cm]{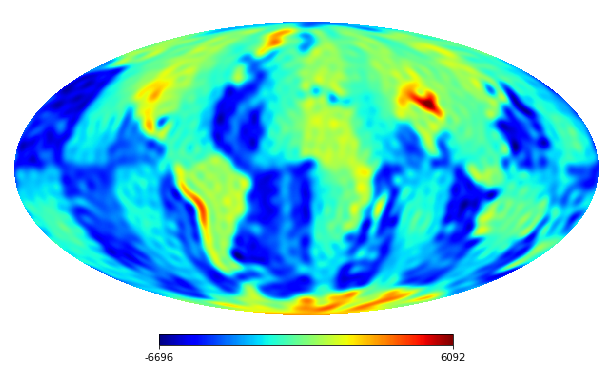}}\\
  \subfigure[Observed]{
  \includegraphics[width=3cm]{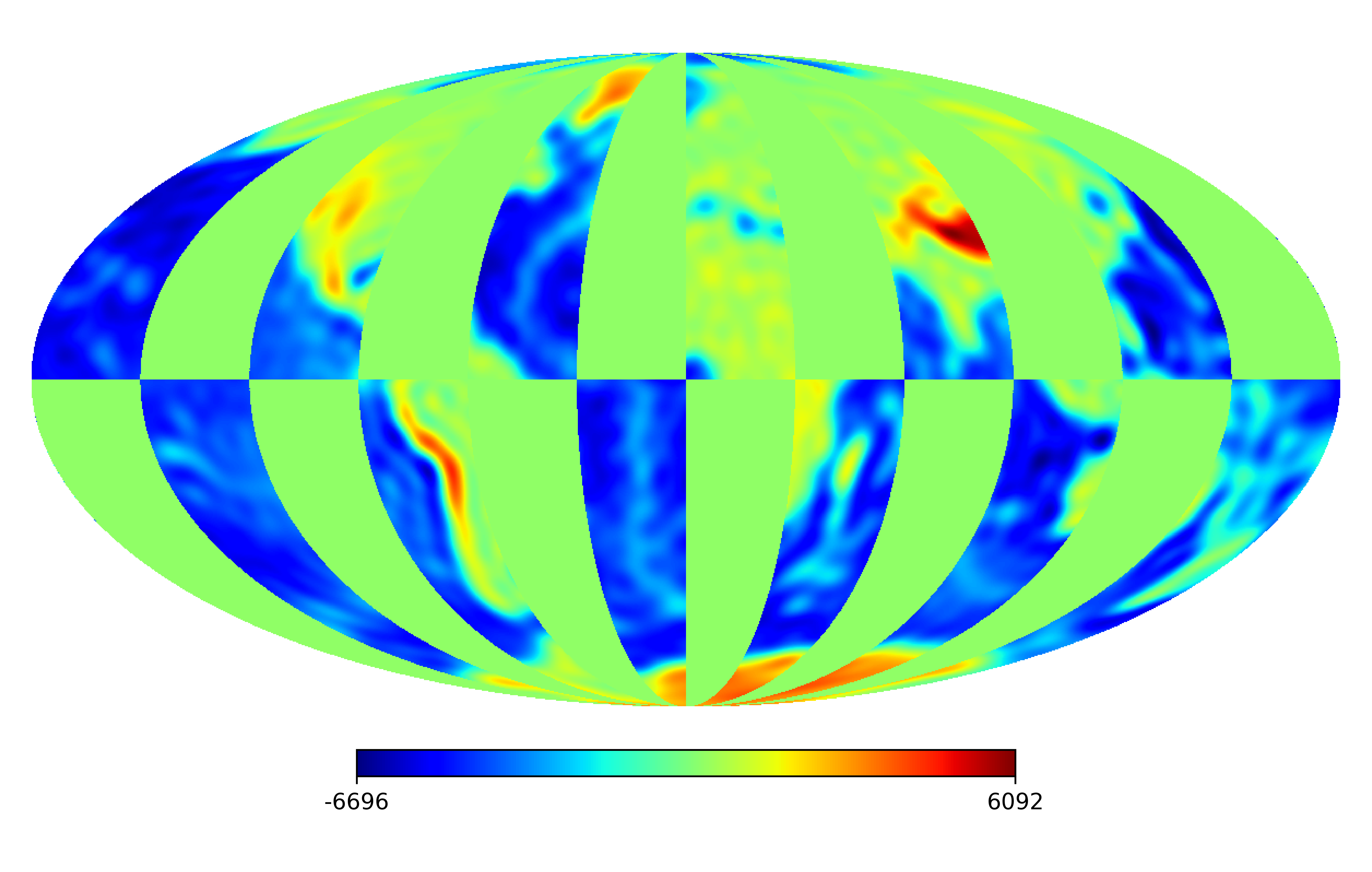}}
  \subfigure[Error of (b)]{
  \includegraphics[width=3cm]{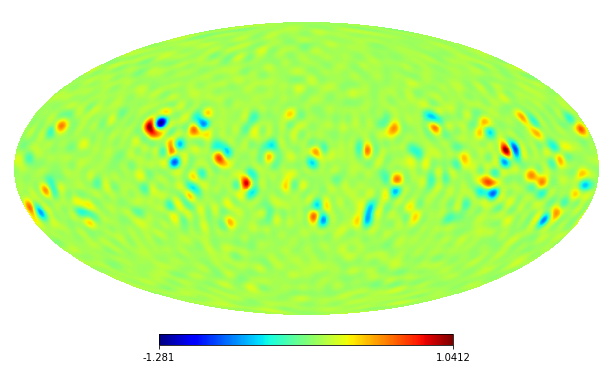}}
  \subfigure[Error of (c)]{
  \includegraphics[width=3cm]{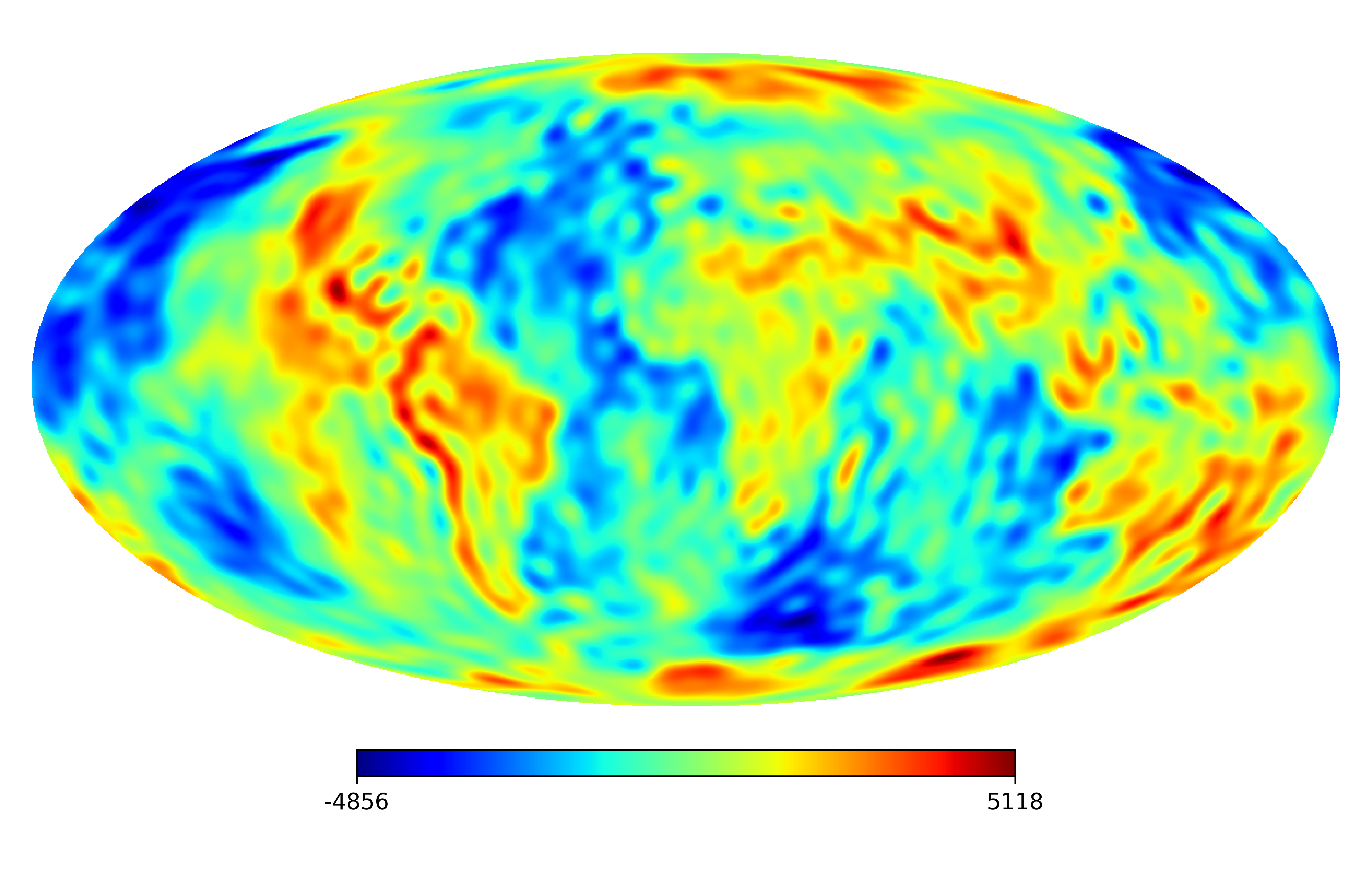}}
  \subfigure[{Error of (d)}]{
  \includegraphics[width=3cm]{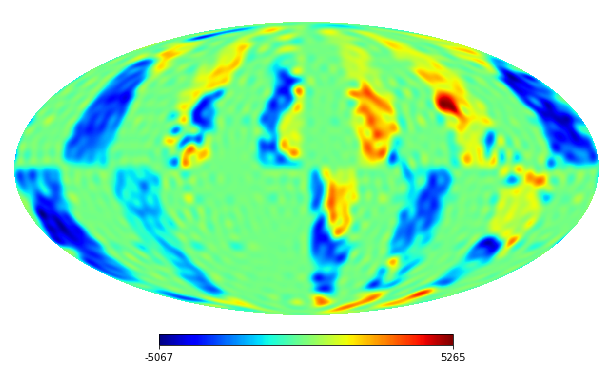}}
  \caption{Test images of Earth topographic data constructed to be band-limited at  $L=50$. The SNR of (b) is 90.47, the SNR of (c) is 6.44 {and the SNR of (d) is 8.07}.}\label{earth50}
\end{figure}

\begin{figure}[h]
  \centering
  \subfigure[True]{
  \includegraphics[width=3cm]{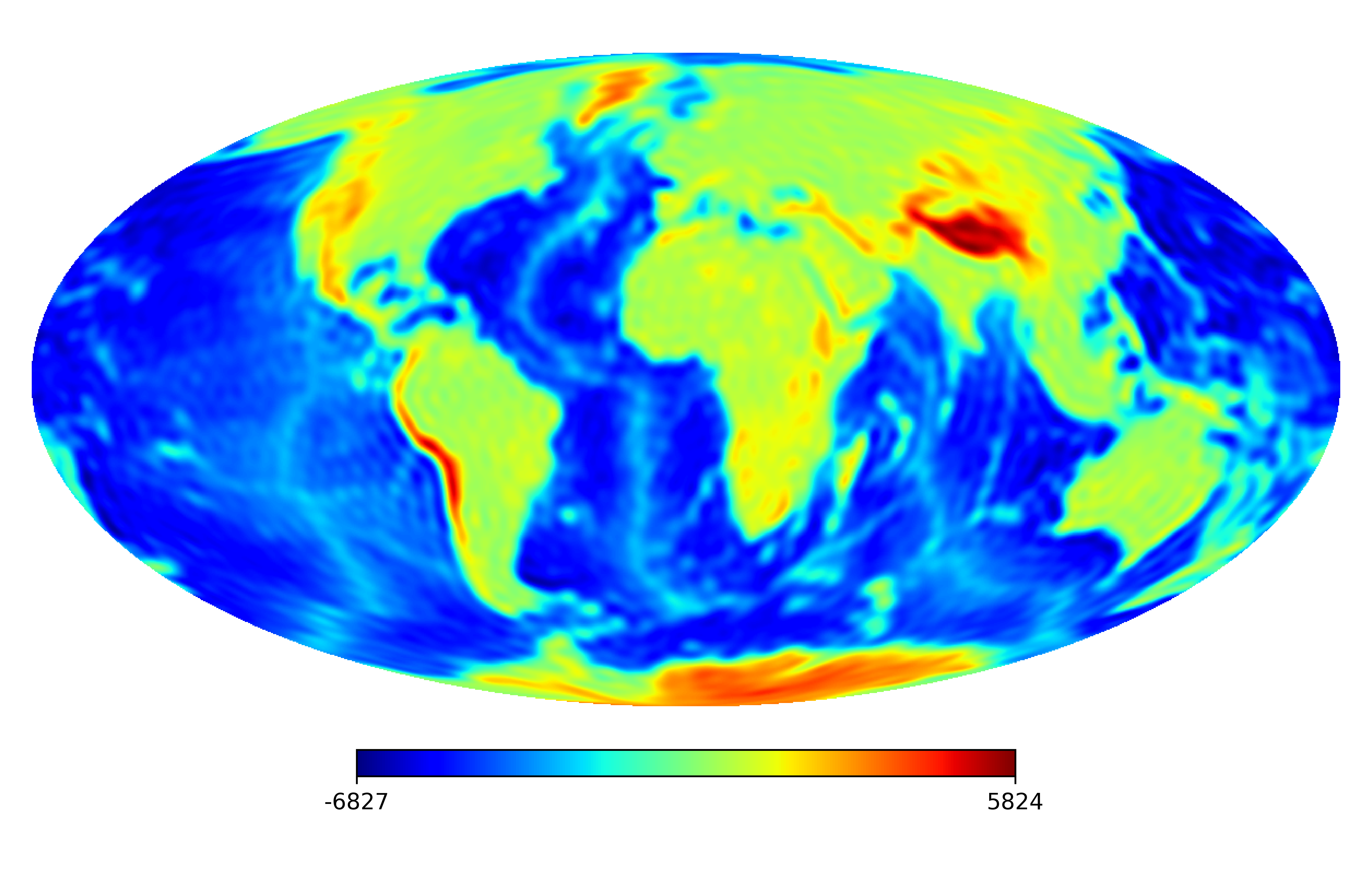}}
  \subfigure[ Algorithm \ref{alg:smoothnpg}]{
  \includegraphics[width=3cm]{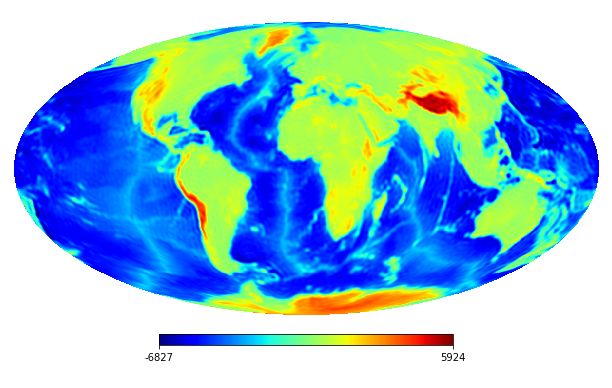}}
  \subfigure[YALL1]{
  \includegraphics[width=3cm]{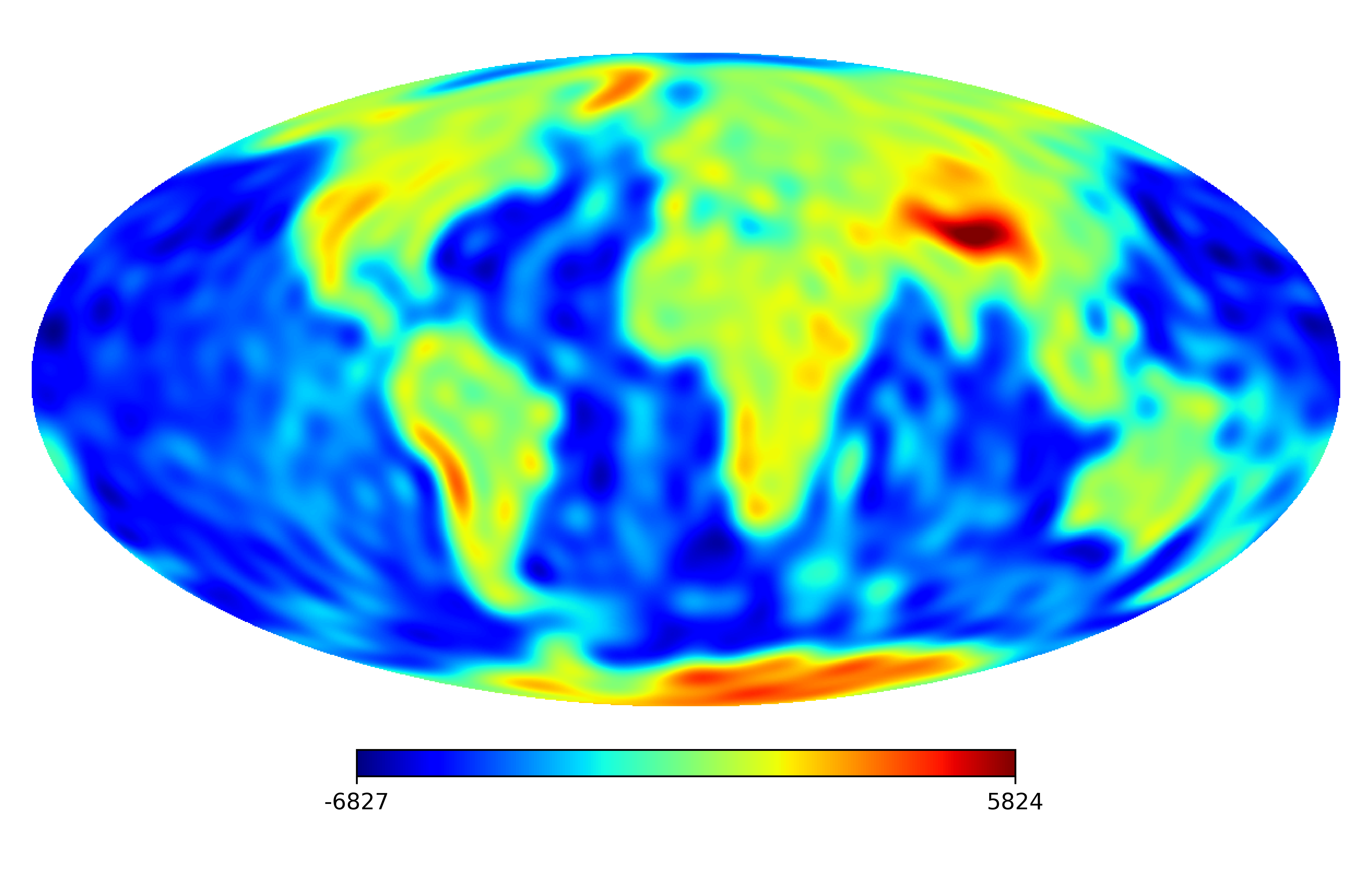}}
   \subfigure[{group SPGl1}]{
  \includegraphics[width=3cm]{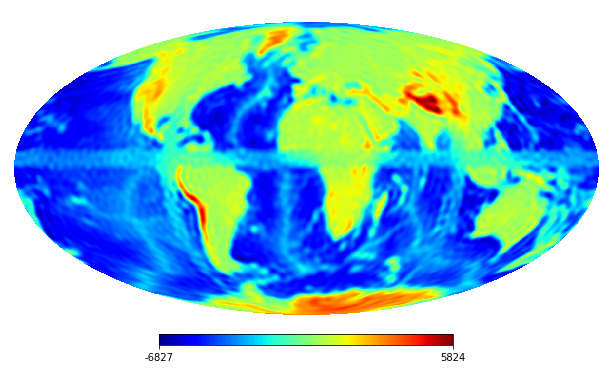}}\\
   \subfigure[Observed]{
  \includegraphics[width=3cm]{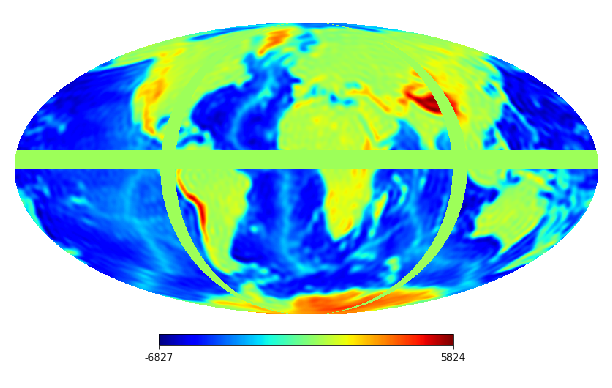}}
  \subfigure[Error of (b)]{
  \includegraphics[width=3cm]{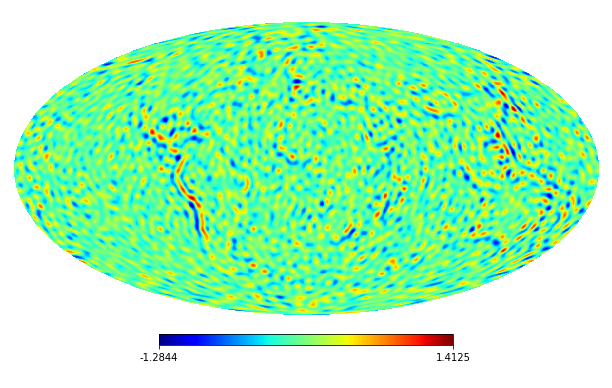}}
  \subfigure[Error of (c)]{
  \includegraphics[width=3cm]{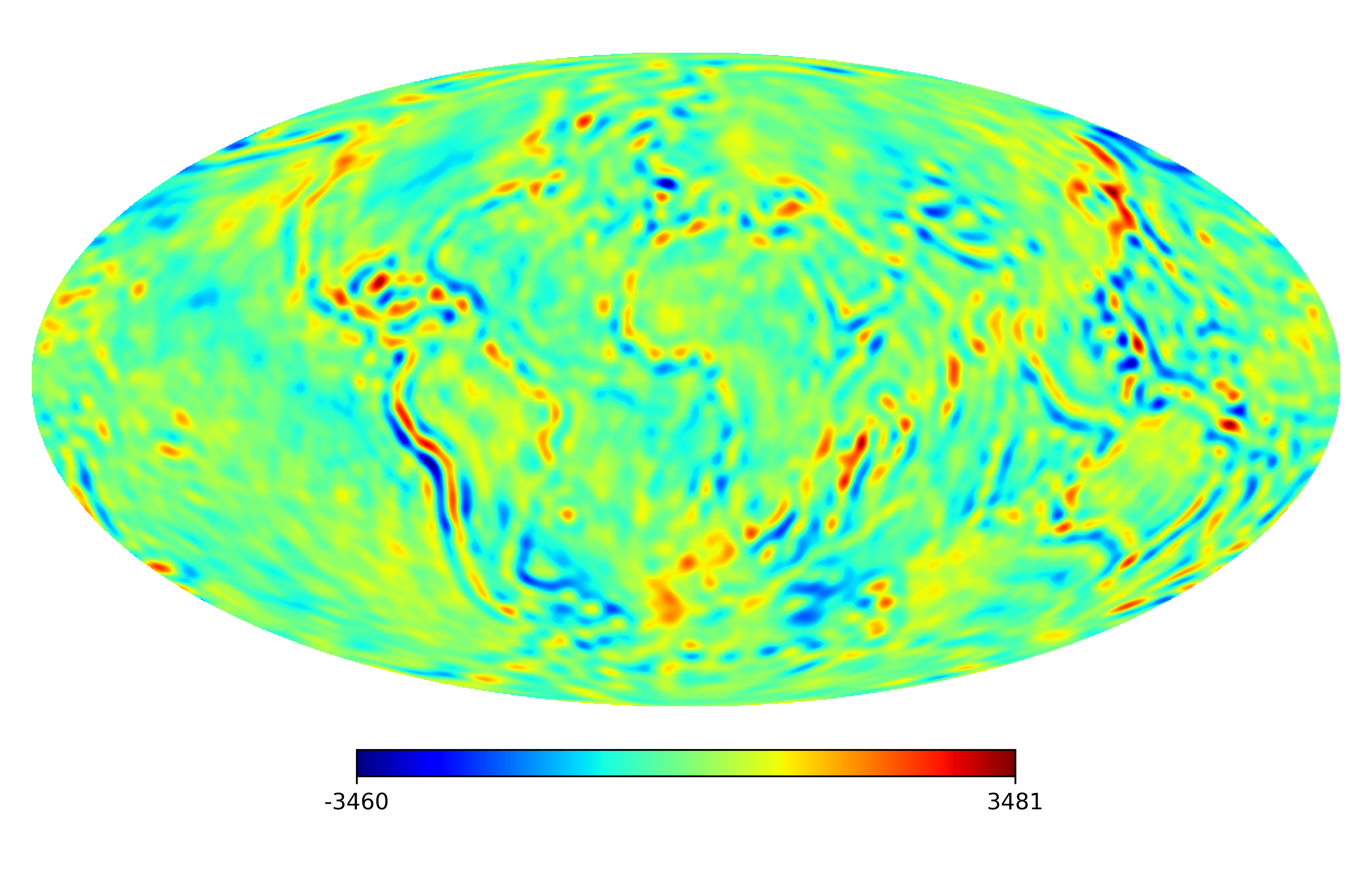}}
  \subfigure[{Error of (d)}]{
  \includegraphics[width=3cm]{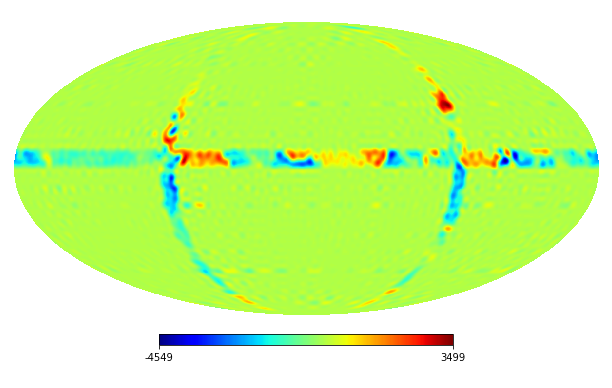}}
  \caption{ Test images of Earth topographic data constructed to be band-limited at $L=80$. The SNR of (b) is 82.48,  the SNR of (c) is 14.70 {and the SNR of (d) is 18.04}.}\label{earth80}
\end{figure}

\begin{figure}[h]
  \centering
  \subfigure[True]{
  \includegraphics[width=3cm]{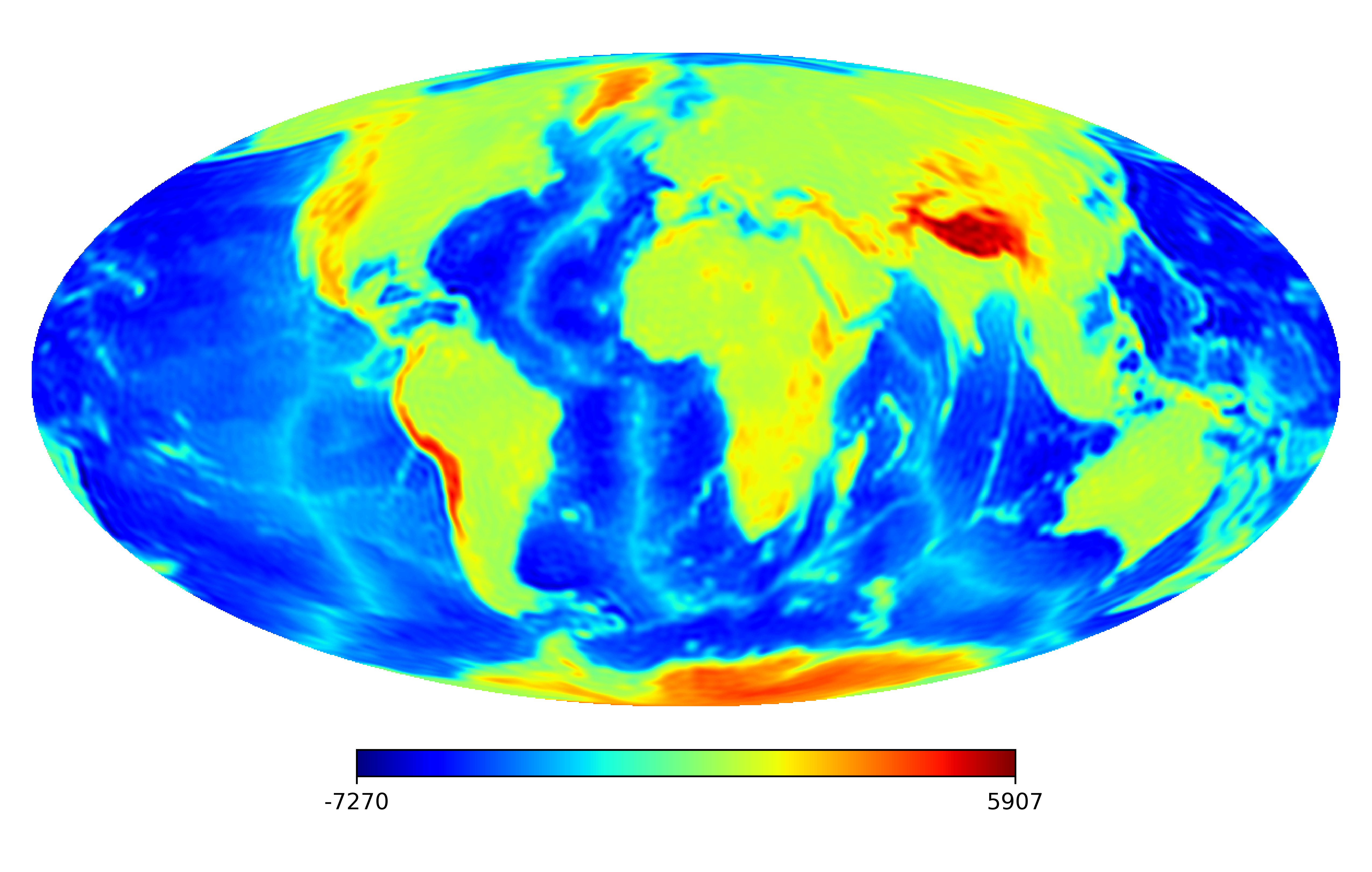}}
  \subfigure[Algorithm \ref{alg:smoothnpg}]{
  \includegraphics[width=3cm]{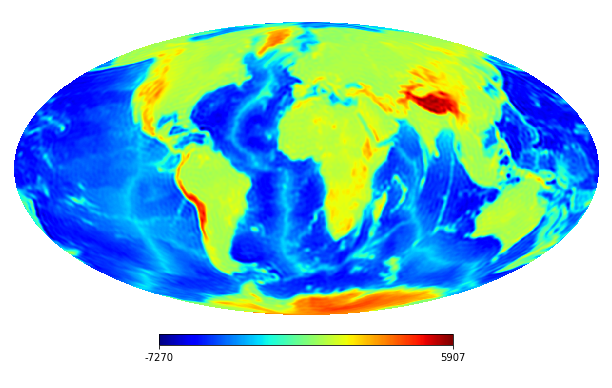}}
  \subfigure[ YALL1]{
  \includegraphics[width=3cm]{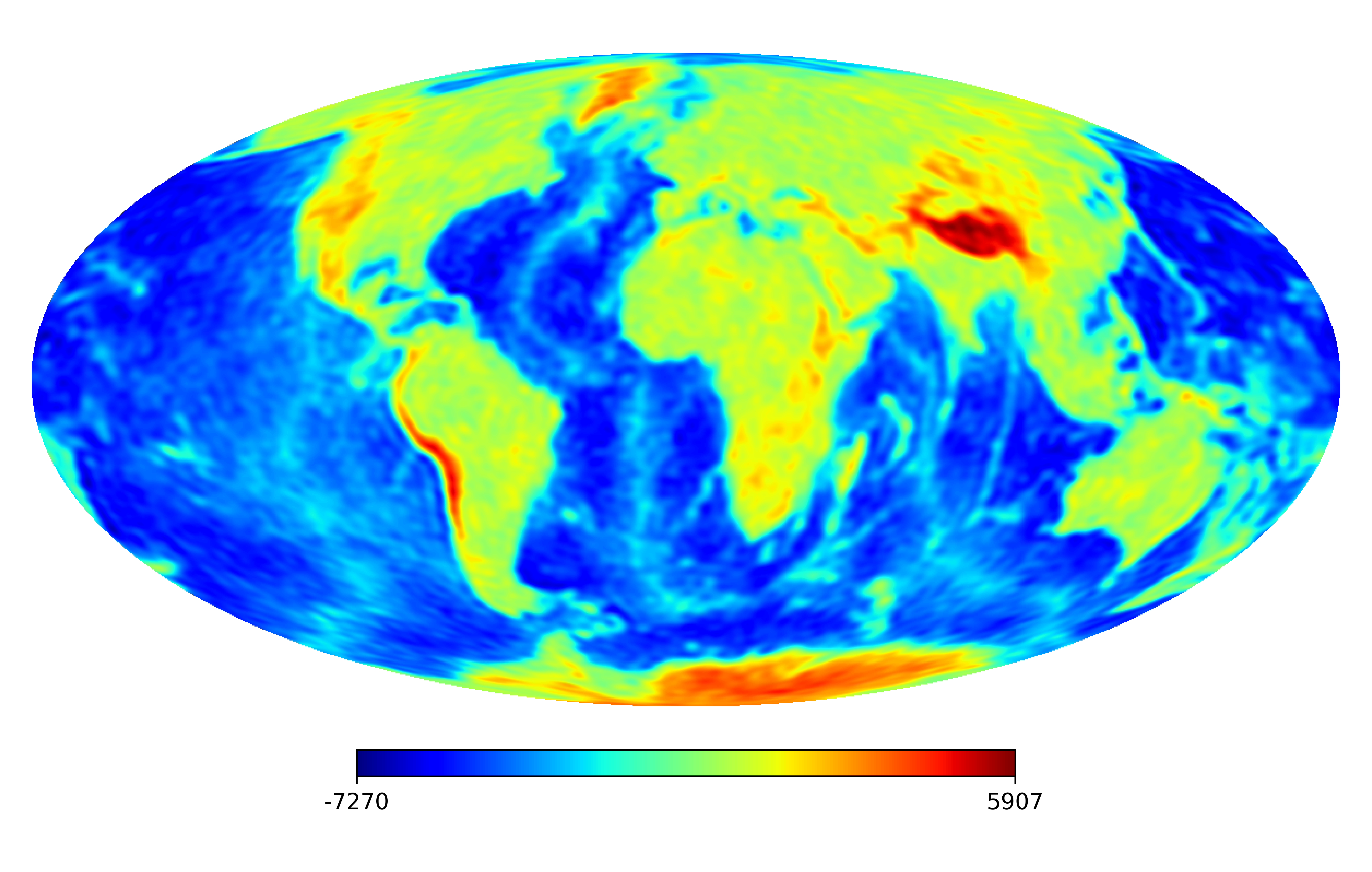}}
  \subfigure[{ group SPGl1}]{
  \includegraphics[width=3cm]{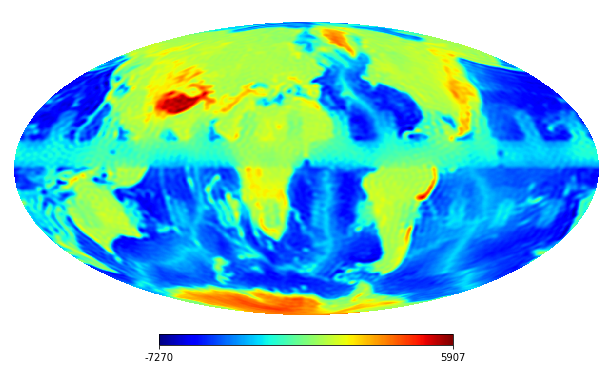}}\\
   \subfigure[Observed]{
  \includegraphics[width=3cm]{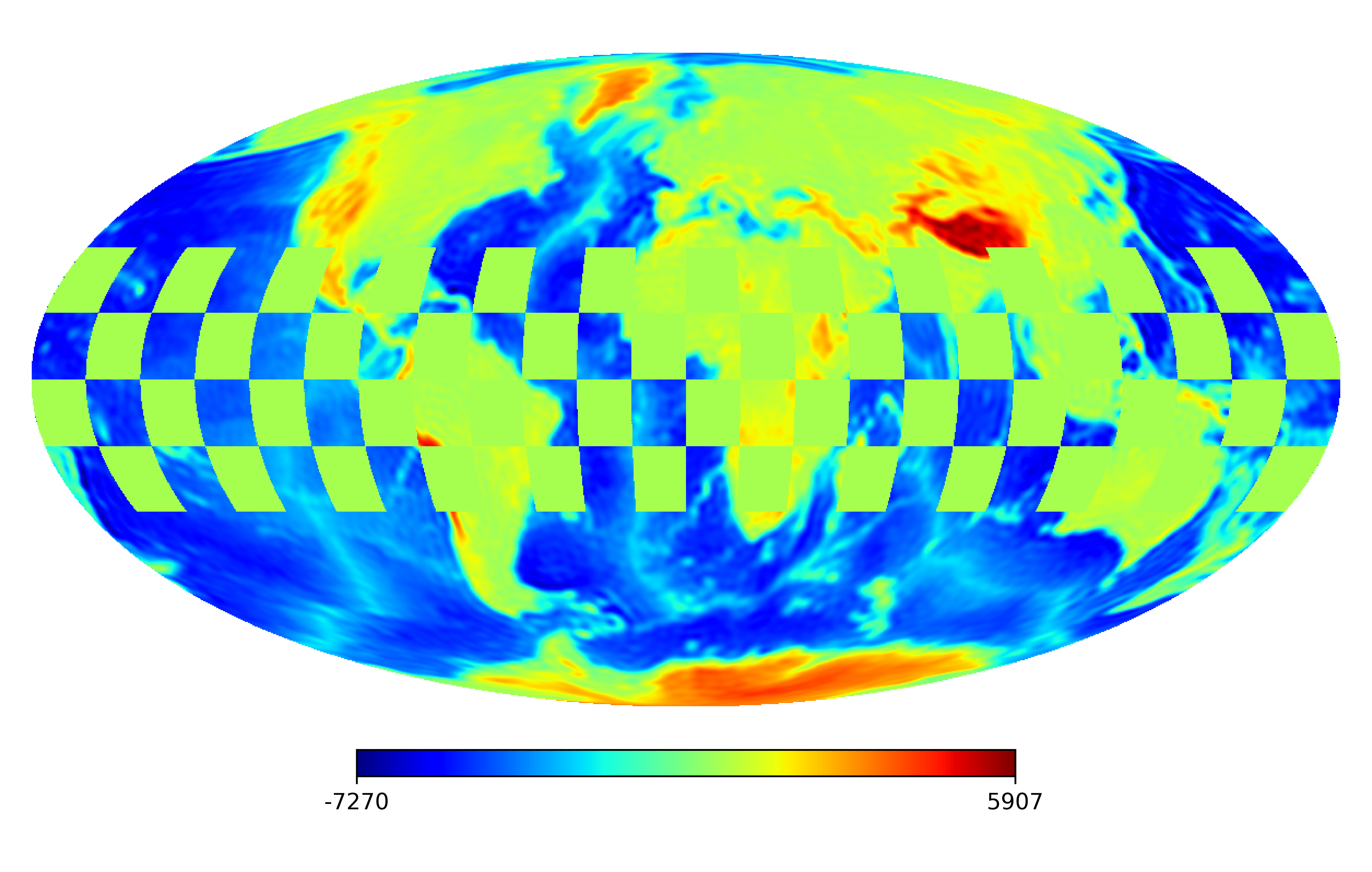}}
   \subfigure[Error of  (b)]{
  \includegraphics[width=3cm]{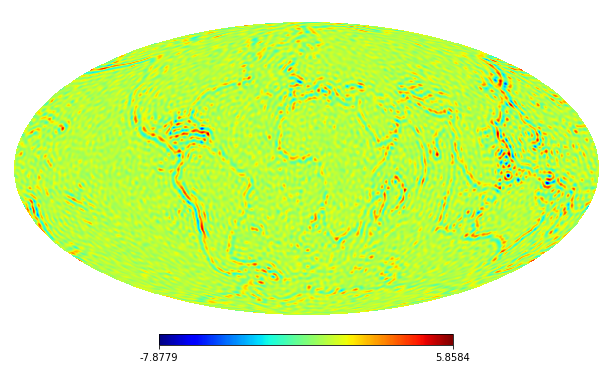}}
  \subfigure[Error of (c) ]{
  \includegraphics[width=3cm]{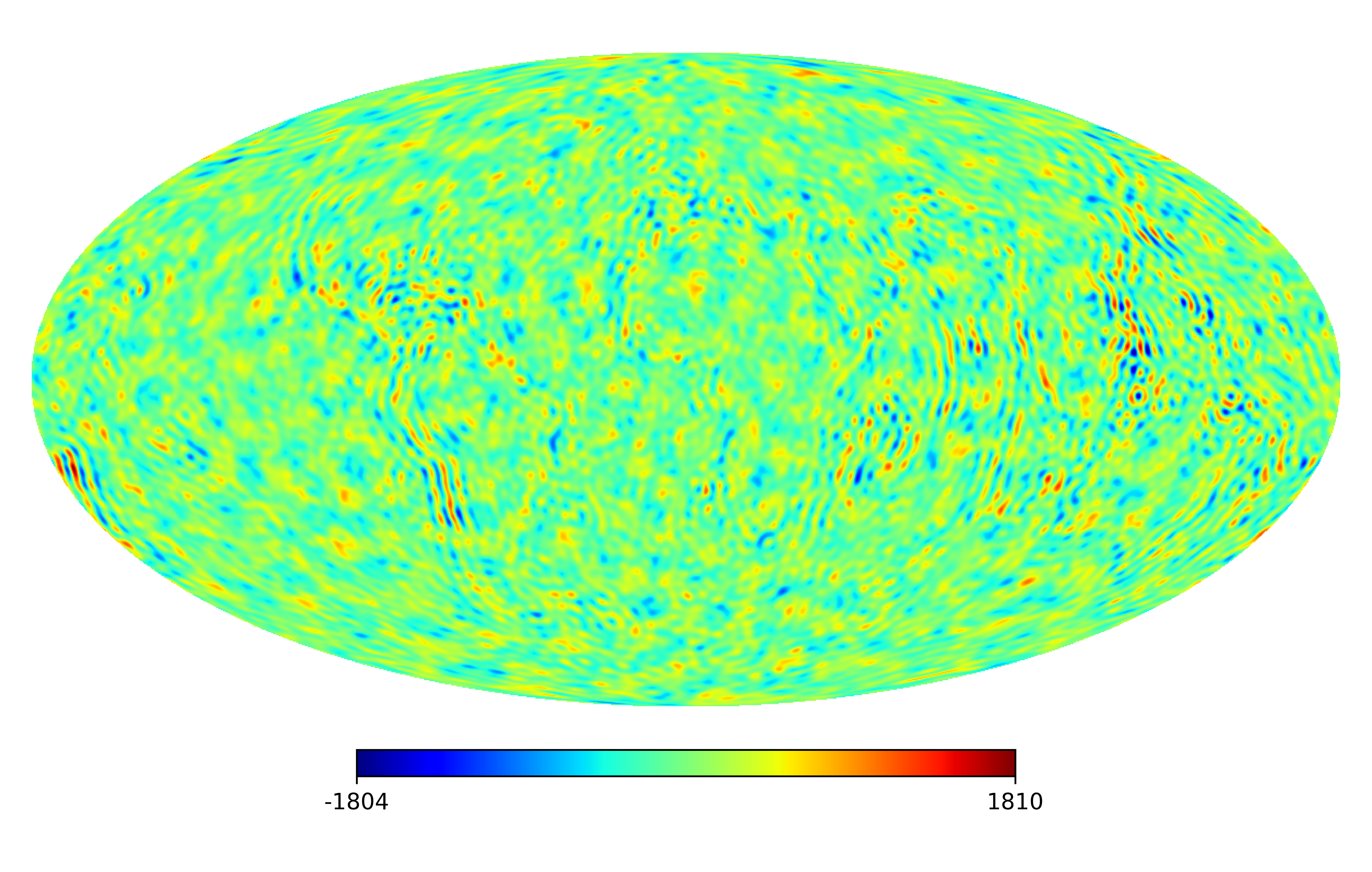}}
  \subfigure[{Error of (d)} ]{
  \includegraphics[width=3cm]{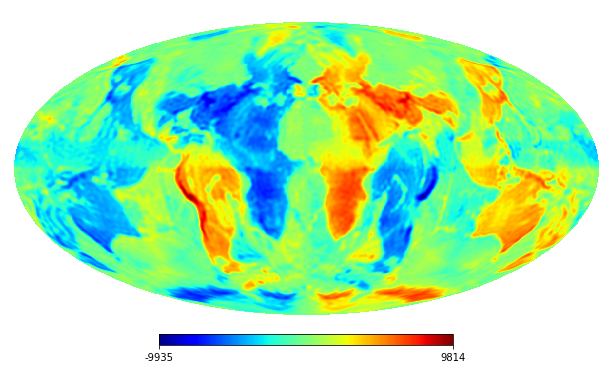}}
  \caption{ Test images of Earth topographic data constructed to be band-limited at $L=128$. The SNR of (b) is 73.18, the SNR of (c) is 21.60 {and  the SNR of (d) is 1.65}.}\label{earth128}
\end{figure}

\section{Conclusion}
In this paper, we propose a constrained group sparse optimization model (\ref{inin:model}) for the inpainting of random fields on the unit sphere with unique continuation property.
Based on the K-L expansion, we rewrite the constraint in (\ref{inin:model}) in a discrete form and derive an equivalence formulation (\ref{model2}) of problem  (\ref{inin:model}).
For  problem (\ref{model2}), we derive a lower bound (\ref{lowerbound:kkt2}) for the $\ell_{2}$ norm of nonzero groups of its scaled KKT points.
We show that problem (\ref{model2}) is equivalent to the finite-dimensional problem (\ref{fini:cmodel}).
For this finite-dimensional problem (\ref{fini:cmodel}) and its penalty problem (\ref{fini:penalty}), we prove the exact penalization in terms of local minimizers and $\varepsilon$-minimizers.
Moreover, we propose a smoothing penalty algorithm for solving problem (\ref{fini:cmodel}) and prove its convergence.
We also present the approximation error of the inpainted random field represented by the scaled KKT point in the space $L_{2}(\Omega\times\mathbb{S}^2)$.
Finally, we present numerical results on band-limited random fields on the sphere and the images from Earth topographic data to show the promising performance of our model by using the smoothing penalty algorithm.

\bibliographystyle{plain}
\bibliography{reference}


\appendix
\section* {Appendix A.\ Unique continuation property}
\label{appendix}
In this Appendix, we give details about the unique continuation property of a class of fields with spherical harmonic representations.

Let $\hat{r}\in(1,\eta)$ be a positive constant for some $\eta>1$, $B(0;\hat{r}):=\{(r,\theta,\phi): r\in[0, \hat{r}), \theta\in[0,\pi], \phi\in[0,2\pi]\}\subset\mathbb{R}^3$ be an open ball centered at $0$ of radius $\hat{r}$.
 Let
   \begin{equation}\label{T:FI}
   \mathcal{T}=\left\{ T(\theta,\phi)\in L_{2}(\mathbb{S}^2):  T(\theta,\phi)
 =\sum_{l=0}^\infty \sum_{m=-l}^l\alpha_{l,m}Y_{l,m}(\theta,\phi)
 \ {\rm{and}}\ \alpha\in\ell_{\beta}^p \right\}\footnote{With a slight abuse of notation, in this subsection, $T(\theta,\phi)\equiv T(\mathrm{x})$ and $Y_{l,m}(\theta,\phi)\equiv Y_{l,m}(\mathrm{x})$, where $\mathrm{x}=(\sin\theta\cos\phi,\sin\theta\sin\phi,\cos\theta)^T\in\mathbb{S}^2$.}
 \end{equation}
 be a class of  fields with spherical harmonic representations and coefficients belong to  $\ell_{\beta}^p$.
Then we show that  any  $T\in\mathcal{T}$ has the unique continuation property.
Let $H: B(0;\hat{r})\rightarrow\mathbb{R}$ be given by
$$H(r,\theta,\phi)=\sum\limits_{l=0}^\infty r^l \sum_{m=-l}^l\alpha_{l,m}Y_{l,m}(\theta,\phi),$$
where $\alpha\in\ell_{\beta}^p$.

\begin{lemma}\label{ap:seq}If $\alpha\in\ell_{\beta}^p$, then $\sum_{l=0}^\infty \eta^ll\|\alpha_{l\cdot}\|<\infty$.
\end{lemma}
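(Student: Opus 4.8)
The plan is to extract a pointwise decay estimate on $\|\alpha_{l\cdot}\|$ from the summability hypothesis, and then show that the target series is dominated by a convergent geometric series. The key structural fact to exploit is that the weight $\beta_l = \eta^l l^p$ carries a geometric factor $\eta^l$ with $\eta>1$, while $0<p<1$ makes the $p$-th root of this factor grow faster than any polynomial correction.

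First I would note that since $\alpha\in\ell_{\beta}^{p}$, the number $M:=\|\alpha\|_{2,p}^{p}=\sum_{l=0}^{\infty}\beta_l\|\alpha_{l\cdot}\|^{p}$ is finite. Being a convergent series of nonnegative terms, each of its terms is bounded by $M$, so for every $l\ge 1$ we have $\eta^l l^p\|\alpha_{l\cdot}\|^{p}\le M$. Taking $p$-th roots yields $\eta^{l/p}\,l\,\|\alpha_{l\cdot}\|\le M^{1/p}$, that is $\|\alpha_{l\cdot}\|\le M^{1/p}\,l^{-1}\eta^{-l/p}$.

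Next I would multiply through by $\eta^l l$: for $l\ge 1$ this gives $\eta^l l\,\|\alpha_{l\cdot}\|\le M^{1/p}\,\eta^{l(1-1/p)}=M^{1/p}\,r^{l}$, where $r:=\eta^{1-1/p}$. Since $0<p<1$ we have $1-1/p<0$, and $\eta>1$, so $0<r<1$. The $l=0$ term of the series equals $0$, hence $\sum_{l=0}^{\infty}\eta^l l\,\|\alpha_{l\cdot}\|\le M^{1/p}\sum_{l=1}^{\infty}r^{l}=M^{1/p}\,\frac{r}{1-r}<\infty$, which is the assertion.

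There is essentially no serious obstacle here: the only point requiring care is tracking the exponents correctly when taking $p$-th roots and confirming that the resulting geometric ratio $\eta^{1-1/p}$ is strictly less than $1$, which is precisely where the non-Lipschitz range $0<p<1$ enters. An alternative route would be to apply Hölder's inequality to $\sum_l \eta^l l\,\|\alpha_{l\cdot}\| = \sum_l\big(\eta^l l^p\|\alpha_{l\cdot}\|^{p}\big)\big(\eta^{l(1-p)}l^{1-p}\big)$ with conjugate exponents $1/p$ and $1/(1-p)$, but the elementary term-by-term bound above is shorter and fully self-contained, so I would present that.
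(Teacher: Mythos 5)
Your main argument is correct. Both you and the paper start from the same input -- the terms $\beta_l\|\alpha_{l\cdot}\|^p=\eta^l l^p\|\alpha_{l\cdot}\|^p$ of a convergent nonnegative series are bounded (indeed tend to zero) -- but the comparison you then make is different. The paper observes that eventually $l^p\|\alpha_{l\cdot}\|^p<1$, hence $l\|\alpha_{l\cdot}\|\leq l^p\|\alpha_{l\cdot}\|^p$ by the inequality $t\leq t^p$ on $[0,1]$, and so dominates the tail of $\sum\eta^l l\|\alpha_{l\cdot}\|$ directly by the tail of the hypothesis series $\sum\beta_l\|\alpha_{l\cdot}\|^p$. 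You instead solve the term bound for $\|\alpha_{l\cdot}\|$ to get $\|\alpha_{l\cdot}\|\leq M^{1/p}l^{-1}\eta^{-l/p}$ and dominate by the geometric series with ratio $\eta^{1-1/p}<1$. Your route uses the strict inequality $\eta>1$ more heavily and yields a stronger conclusion (an explicit geometric decay rate for $\eta^l l\|\alpha_{l\cdot}\|$), whereas the paper's comparison is softer and would survive with weights growing only like $\eta^l\geq 1$. Both are complete and exponent bookkeeping in your version checks out. One small caveat: the Hölder factorization you sketch as an alternative, $\eta^l l\|\alpha_{l\cdot}\|=\bigl(\eta^l l^p\|\alpha_{l\cdot}\|^p\bigr)\bigl(\eta^{l(1-p)}l^{1-p}\bigr)$, does not multiply out correctly (it is missing the factor $\|\alpha_{l\cdot}\|^{1-p}$ and carries an extra power of $\eta$); since you do not rely on it, this does not affect the proof you actually present.
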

\begin{proof}
Since $\alpha\in\ell_{\beta}^p$ and $\sum_{l=0}^\infty \eta^l =\infty$,  there exists an
integer $N$  such that for any $l>N$,
$\eta^{l}l^p\|\alpha_{l\cdot}\|^p< \eta^l.$
Then, we obtain, for  any $l>N$,  $l\|\alpha_{l\cdot}\|\leq l^p\|\alpha_{l\cdot}\|^p<1$
which implies that $\eta^ll\|\alpha_{l\cdot}\|\leq \eta^ll^p\|\alpha_{l\cdot}\|^p$.
Thus, $\sum_{l=0}^\infty \eta^ll\|\alpha_{l\cdot}\|<\infty.$
This completes the proof.
\end{proof}

\begin{lemma}\label{T:ana}
$H$ is real analytic in $B(0;\hat{r})$.
\end{lemma}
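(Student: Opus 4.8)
The plan is to move to Cartesian coordinates and use harmonicity. Write $\mathrm{x}=(x_1,x_2,x_3)=r(\sin\theta\cos\phi,\sin\theta\sin\phi,\cos\theta)\in\mathbb{R}^3$. For every $l\ge0$ and $-l\le m\le l$, the \emph{solid} spherical harmonic $\mathrm{x}\mapsto r^{l}Y_{l,m}(\theta,\phi)$ is a homogeneous polynomial of degree $l$ in $(x_1,x_2,x_3)$ and is harmonic, i.e. $\Delta\bigl(r^{l}Y_{l,m}\bigr)=0$ on $\mathbb{R}^3$; in particular it is real analytic on all of $\mathbb{R}^3$. Consequently each truncation $H_N(\mathrm{x}):=\sum_{l=0}^{N}r^{l}\sum_{m=-l}^{l}\alpha_{l,m}Y_{l,m}(\theta,\phi)$ is a harmonic polynomial, hence a harmonic function on $B(0;\hat r)$. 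I would then (i) show $H_N\to H$ uniformly on compact subsets of $B(0;\hat r)$; (ii) conclude that $H$ is harmonic on $B(0;\hat r)$, being a locally uniform limit of harmonic functions; and (iii) invoke the classical fact that a harmonic function on an open subset of $\mathbb{R}^3$ is real analytic.

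For (i), fix any $r'$ with $r'<\hat r$ and let $\|\mathrm{x}\|\le r'$. By homogeneity of $r^{l}Y_{l,m}$, the Cauchy--Schwarz inequality, and the addition theorem for spherical harmonics (which yields an absolute constant $c_0$ with $\sum_{m=-l}^{l}|Y_{l,m}(\theta,\phi)|^{2}\le c_0(2l+1)$), the $l$-th term of the series for $H$ is bounded on $\{\|\mathrm{x}\|\le r'\}$ by $(r')^{l}\bigl(\sum_{m}|\alpha_{l,m}|^{2}\bigr)^{1/2}\bigl(\sum_{m}|Y_{l,m}|^{2}\bigr)^{1/2}\le\sqrt{c_0}\,(r')^{l}\sqrt{2l+1}\,\|\alpha_{l\cdot}\|$. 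Since $r'<\hat r<\eta$, one has $(r')^{l}\sqrt{2l+1}\le\eta^{l}l$ for all sufficiently large $l$, so $\sum_{l}(r')^{l}\sqrt{2l+1}\,\|\alpha_{l\cdot}\|\le C\sum_{l}\eta^{l}l\,\|\alpha_{l\cdot}\|<\infty$ by Lemma \ref{ap:seq}. Hence the series defining $H$ converges absolutely and uniformly on $\{\|\mathrm{x}\|\le r'\}$; as $r'<\hat r$ was arbitrary, $H_N\to H$ locally uniformly on $B(0;\hat r)$ and $H$ is well defined and continuous there. For (ii), if $\overline{\mathbf{B}(\mathrm{x}_0;\rho)}\subset B(0;\hat r)$ then each $H_N$ satisfies the spherical mean value property over $\partial\mathbf{B}(\mathrm{x}_0;\rho)$, and uniform convergence on that sphere lets me pass to the limit, so $H$ also satisfies the mean value property on all such spheres and is therefore harmonic on $B(0;\hat r)$; then (iii) gives the conclusion.

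There is no serious obstacle here; the two points to be careful about are that the spherical-coordinate degeneracies (at the origin and along the $x_3$-axis) are irrelevant because $r^{l}Y_{l,m}$ is a genuine polynomial on $\mathbb{R}^3$, and that the strict inequality $\hat r<\eta$, combined with $r'<\hat r$, is precisely what makes the geometric factor $(r'/\eta)^{l}$ absorb the polynomial-in-$l$ growth so that Lemma \ref{ap:seq} applies. If one prefers not to appeal to interior regularity of harmonic functions, an alternative is to bound, on $\{\|\mathrm{x}\|\le r'\}$, all Cartesian partial derivatives of order $k$ of $r^{l}\sum_{m}\alpha_{l,m}Y_{l,m}$ by iterated Markov inequalities for homogeneous polynomials of degree $l$ (each differentiation costing a factor $O(l^{2})$) and then sum over $l$; this again converges because every power of $l$ is dominated by $(\hat r/\eta)^{-l}$, and it exhibits $H$ directly as a power series in $(x_1,x_2,x_3)$ that converges, together with all its derivatives, locally uniformly on $B(0;\hat r)$.
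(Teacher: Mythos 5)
Your proposal is correct and follows essentially the same route as the paper: truncate the series, observe that each partial sum is a harmonic (solid spherical harmonic) polynomial, bound the $l$-th term by $\eta^{l}l\|\alpha_{l\cdot}\|$ via Cauchy--Schwarz and the addition theorem so that Lemma \ref{ap:seq} gives uniform convergence, and conclude that the limit is harmonic and hence real analytic. The only cosmetic differences are that you work with locally uniform convergence on compact subsets and spell out the mean value argument, where the paper gets uniform convergence on all of $B(0;\hat r)$ and cites standard results on limits and interior regularity of harmonic functions.
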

\begin{proof} Let
 $H_{L}(r,\theta,\phi)=\sum_{l=0}^LH_{l}(r,\theta,\phi)$,
 where $H_{l}(r,\theta,\phi)=r^l\sum_{m=-l}^l\alpha_{l,m}Y_{l,m}(\theta,\phi)$.
 By definition of harmonic functions \cite{r1}, $r^lY_{l,m}(\theta,\phi)$, $l\in\mathbb{N}_0$, $m=-l,\ldots,l$ are harmonic functions on $B(0;\hat{r})$.
 Then we obtain that $H_L$, $L\in\mathbb{N}_0$ are harmonics functions on $B(0;\hat{r})$.
Moreover, for any $(r,\theta,\phi)\in B(0;\hat{r})$ and $l>0$,
\begin{eqnarray*}
  \vert H_{l}(r,\theta,\phi)\vert&=& \left\vert\sum\limits_{m=-l}^lr^l\alpha_{l,m}Y_{l,m}(\theta,\phi)\right\vert
  \leq\left\vert\left(\sum\limits_{m=-l}^l(r^l\alpha_{l,m})^2\right)^{\frac{1}{2}}\left(\sum\limits_{m=-l}^l(Y_{l,m}(\theta,\phi))^2\right)^{\frac{1}{2}}\right\vert\\
  &=&r^l\|\alpha_{l\cdot}\|\sqrt{\tfrac{2l+1}{4\pi} }
  < \eta^ll\|\alpha_{l\cdot}\|,
\end{eqnarray*}
where  the first inequality follows from Cauchy-Schwarz inequality, the second equality follows from addition theorem of spherical harmonics and the last inequality follows from $r\in[0,\hat{r})$ and $\hat{r}<\eta$.

By Lemma \ref{ap:seq}, $\sum_{l=0}^\infty \eta^ll\|\alpha_{l\cdot}\|<\infty$, then for any $\epsilon>0$, there exists $N$ such that for any $L'>L>N$, we have that for any $(r,\theta,\phi)\in B(0;\hat{r})$,
\begin{eqnarray*}
  \vert H_{L'}(r,\theta,\phi)-H_L(r,\theta,\phi)\vert&=&\left\vert\sum_{l=L+1}^{L'}H_{l}(r,\theta,\phi)\right\vert
  \leq \sum_{l=L+1}^{L'}\left\vert H_{l}(r,\theta,\phi)\right\vert
  < \sum_{l=L+1}^{L'}\eta^ll\|\alpha_{l\cdot}\|<\epsilon.
\end{eqnarray*}
Thus, the sequence of harmonic functions $\{H_{L}\}$ converges uniformly to $H$ on $B(0;\hat{r})$.

By  \cite[Theorem 1.23]{r1},  $H$ is harmonic on $B(0;\hat{r})$.
Moreover, by \cite[Theorem 1.28]{r1}, $H$ is real analytic in $B(0;\hat{r})$. This completes the proof.
\end{proof}

\begin{lemma}\label{le:t0}
 For any $T\in\mathcal{T}$, if  there exists a sequence of distinct points $\{(\theta_n,\phi_n)\}\subseteq \mathbb{S}^2$, with at least one limit point in $\mathbb{S}^2$, and if $T(\theta_n,\phi_n)=0$, $n=1,2,\ldots$, then $T\equiv0$ on $\mathbb{S}^2$.
\end{lemma}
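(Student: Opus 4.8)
The plan is to pass from $T$ on the sphere to its harmonic extension $H$ on the solid ball, where real-analyticity is available through Lemma~\ref{T:ana}. Since $\hat{r}>1$, the unit sphere $\{r=1\}$ is contained in the connected open ball $B(0;\hat{r})$, and by construction $T(\theta,\phi)=H(1,\theta,\phi)$. Hence the restriction of the real-analytic function $H$ to the unit sphere is exactly $T$, so $T$ is itself a real-analytic function on the compact, connected, real-analytic $2$-manifold $\mathbb{S}^2$. The hypothesis then says that $T$ vanishes on a sequence $\{(\theta_n,\phi_n)\}$ of distinct points possessing a limit point $p_0\in\mathbb{S}^2$, and the goal is to conclude $T\equiv 0$ via the identity theorem for real-analytic functions.

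First I would localize at $p_0$: choosing a real-analytic chart of $\mathbb{S}^2$ around $p_0$, I would write $T$ in local coordinates $u\in\mathbb{R}^2$ as a power series convergent on a neighborhood of the origin (the image of $p_0$). The target is to show that every Taylor coefficient of this series vanishes; once that is established, $T\equiv 0$ on a neighborhood of $p_0$, and then the identity theorem on the connected manifold $\mathbb{S}^2$ propagates the vanishing to all of $\mathbb{S}^2$, giving $T\equiv 0$. The reductions to $H$, to the $2$-manifold $\mathbb{S}^2$, and the final propagation step are all routine; the entire content sits in the coefficient-vanishing step.

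I expect that coefficient-vanishing step to be the decisive obstacle, and it is genuinely delicate in this multivariate setting. In one real variable a zero sequence with a limit point forces every Taylor coefficient to vanish, but in two or more variables this fails: the zeros may all lie along a single analytic curve through $p_0$, killing only the tangential derivatives while leaving $T$ nonzero. The explicit witness is $T=Y_{1,0}$, which vanishes along the entire equator (hence along many sequences with limit points) yet is not identically zero. Consequently a bare accumulation point cannot suffice; to drive the argument through one must know that the zeros accumulate in two independent directions, i.e.\ that they fill a nonempty relatively open subset of $\mathbb{S}^2$. This is precisely the configuration supplied by a mask whose retained region $\Gamma$ contains an open set, and under that strengthened hypothesis the real-analytic identity theorem on $\mathbb{S}^2$ closes the proof at once; I would flag this as the point where the stronger open-set assumption is indispensable.
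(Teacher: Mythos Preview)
Your analysis is correct, and in fact you have identified a genuine error in the paper's own statement and proof. The lemma as written is false: your counterexample $T=Y_{1,0}$ lies in $\mathcal{T}$ (a single nonzero coefficient, hence trivially in $\ell_\beta^p$), vanishes on the entire equator, and therefore vanishes on any sequence of distinct equatorial points with an equatorial limit point, yet $Y_{1,0}\not\equiv 0$. The paper's proof simply asserts that from $H(1,\theta_n,\phi_n)=0$ and the real-analyticity of $H$ on $B(0;\hat r)\subset\mathbb{R}^3$ one gets $H\equiv 0$ by ``[Theorem 8.1.3, hill]''; but no identity theorem for real-analytic functions in several real variables yields that conclusion from a one-dimensional accumulation of zeros, and the same counterexample (with $H(r,\theta,\phi)=r\cos\theta$, a nonzero harmonic polynomial vanishing on the plane $z=0$) defeats it.

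Your diagnosis of the fix is also right: what is actually used downstream is Theorem~\ref{uniq}, where $T$ vanishes on $\Gamma$, a set containing a nonempty open subset of $\mathbb{S}^2$. Under that hypothesis the argument goes through cleanly, either by the identity theorem for real-analytic functions on the connected manifold $\mathbb{S}^2$ (your route), or by noting that $H$ then vanishes on a relatively open subset of the sphere, hence on an open cone in $B(0;\hat r)$ by homogeneity of each $H_l$, and then invoking the identity theorem in $\mathbb{R}^3$. So the correct statement is the one with an open set in the hypothesis; the ``sequence with a limit point'' version cannot be salvaged.
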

\begin{proof}
It is obvious that
  $H(1,\theta,\phi)=T(\theta,\phi)$, then
  $T(\theta_n,\phi_n)=0$, $n=1,2,\ldots$ implies that $H(1,\theta_n,\phi_n)=0$,  $n=1,2,\ldots$
  By Lemma \ref{T:ana}, $H$ is real analytic in $B(0;\hat{r})$.
  Then by \cite[Theorem 8.1.3]{hill}, we obtain $H\equiv0$ in $B(0;\hat{r})$
  which implies that $T\equiv0$ on $\mathbb{S}^2$. This completes the proof.
\end{proof}

Now we present the unique continuation property of  any $T\in\mathcal{T}$.
\begin{theorem}\label{uniq}
For any $T\in\mathcal{T}$, if $T=0$ on $ \Gamma$, then $T\equiv0$ on $\mathbb{S}^2$.
 \end{theorem}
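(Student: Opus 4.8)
The plan is to deduce the theorem directly from Lemma \ref{le:t0}, which already carries the analytic content of the argument (through the real-analytic harmonic extension $H$ of Lemma \ref{T:ana}). All that remains is to extract, from the hypothesis that $T$ vanishes on $\Gamma$, the type of zero set that Lemma \ref{le:t0} requires, namely a sequence of distinct points in $\mathbb{S}^2$ possessing a limit point in $\mathbb{S}^2$.

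First I would invoke the standing assumption that $\Gamma$ contains a nonempty open subset $U\subseteq\mathbb{S}^2$. Fixing a point $(\theta_0,\phi_0)\in U$, openness of $U$ in $\mathbb{S}^2$ gives a small spherical neighbourhood of $(\theta_0,\phi_0)$ inside $U$, and hence an infinite sequence of distinct points $\{(\theta_n,\phi_n)\}_{n\ge 1}\subseteq U$ with $(\theta_n,\phi_n)\to(\theta_0,\phi_0)$ (for instance, points along a short geodesic arc issuing from $(\theta_0,\phi_0)$ and remaining in $U$). These are distinct points of $\mathbb{S}^2$ with limit point $(\theta_0,\phi_0)\in\mathbb{S}^2$. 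Since $T\in\mathcal{T}$ and $T=0$ on $\Gamma\supseteq U$, we have $T(\theta_n,\phi_n)=0$ for all $n$, so Lemma \ref{le:t0} applies and yields $T\equiv0$ on $\mathbb{S}^2$, which is the claim.

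I expect no genuine obstacle in this last step; the difficulty is concentrated in the preceding lemmas, which I may assume: that the enlarged series $H(r,\theta,\phi)=\sum_{l} r^{l}\sum_{m}\alpha_{l,m}Y_{l,m}(\theta,\phi)$ converges locally uniformly on $B(0;\hat r)$ for some $\hat r\in(1,\eta)$ (this is precisely where the weights $\beta_l=\eta^{l}l^{p}$ and the membership $\alpha\in\ell^{p}_{\beta}$ are used, via $\sum_{l}\eta^{l}l\|\alpha_{l\cdot}\|<\infty$ from Lemma \ref{ap:seq}), that $H$ is consequently harmonic and real analytic on the ball $B(0;\hat r)\supset\mathbb{S}^2$ (Lemma \ref{T:ana}), and that an identity theorem then forces $H\equiv0$ on $B(0;\hat r)$, whence $T(\theta,\phi)=H(1,\theta,\phi)\equiv0$ (Lemma \ref{le:t0}). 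As an equivalent route for the write-up, one could dispense with the sequence altogether: $T=H(1,\cdot,\cdot)$ is the restriction of the real-analytic function $H$ to the connected real-analytic manifold $\mathbb{S}^2\subset B(0;\hat r)$, hence is itself real analytic on $\mathbb{S}^2$, and a real-analytic function on a connected manifold vanishing on a nonempty open subset vanishes identically; I would choose whichever formulation keeps the dependence on the cited analyticity results most transparent.
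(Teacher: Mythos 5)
Your argument is correct and follows essentially the same route as the paper: both deduce the theorem from Lemma \ref{le:t0} by using the open subset of $\Gamma$ to produce a sequence of distinct zeros of $T$ with a limit point in $\mathbb{S}^2$. Your write-up simply makes explicit the construction of that sequence, which the paper's one-line proof leaves implicit.
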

 \begin{proof} Since the coefficients $\alpha\in\ell_{\beta}^{p}$ and $ \Gamma$ has an open subset, we know from Lemma \ref{le:t0} that if $T=0$ on $\Gamma$ then $T\equiv0$ on $\mathbb{S}^2$.
\end{proof}

By Parseval's theorem, for any $T\in L_{2}(\mathbb{S}^2)$, we have $\|T\|_{L_{2}(\mathbb{S}^2)}^2=\|\alpha\|^2$, which implies that $T\equiv0$ if and only if $\alpha=0$.
Hence, we can claim that for any $T\in\mathcal{T}$, $\mathcal{A}(T)\equiv0$ if and only if $\alpha=0$.

{\section* {Appendix B.\ Wirtinger gradient}
In this appendix, we briefly introduce the Wirtinger gradient of real-valued functions with complex variables.
For more details about Wirtinger calculus, we refer  to  \cite{BR,Ken,chen20}.

Let $f:\mathbb{C}^{n}\rightarrow\mathbb{R}$ be a real-valued function of a vector of complex variables $z\in\mathbb{C}^{n}$, which can be written as {{$f_1(z,{{\bar{z}}})$, that is $f(z)=f_1(z,{{\bar{z}}})$}}. Let $z=x+iy$, where $x, y\in\mathbb{R}^n$.
We say  $f(z)=f_{{2}}(x+iy)$ is  Wirtinger differentiable, if $f_{{{2}}}$ is differentiable with respect to $x$ and $y$, respectively.
Following \cite{BR,Ken}, under the conjugate coordinates
 $(z^T,\bar{z}^T)^T\in\mathbb{C}^{n}\times\mathbb{C}^{n}$, the Wirtinger gradient of $f$ at $z \in \mathbb{C}^{n}$ is
$$\nabla f(z)=\left(
\begin{array}{c}
\partial_{z}f(z) \vspace{1mm}\\
\partial_{\bar{z}}f(z) \\
\end{array}
\right),
$$
where $
\partial_{z}f(z):=\frac{\partial f_{{1}}(z,\bar{z})}{\partial z}\big|_{\bar{z}={\rm{const}}}$, $
\partial_{\bar{z}}f(z):= \frac{\partial f_{{1}}(z,\bar{z})}{\partial \bar{z}}\big|_{z={\rm{const}}}.
$
Since $f$ is real-valued, $\overline{\partial_{z}f}=\partial_{\bar{z}}f$. Thus, $$\nabla f(z)=0 \textrm{ } \quad  \Leftrightarrow\textrm{ } \quad    \partial_{\bar{z}}f(z)=0.$$
We say $z^{*}\in\mathbb{C}^{n}$ is a stationary point of $f$ if it satisfies
$$
\partial_{\bar{z}}f(z^*)=0.
$$
It is easy to verify that the
Wirtinger  gradient of $f(z)=\|z\|^{2}$,  $z\in\mathbb{C}^{n}$, is
$\nabla f(z)= (\bar z^T, z^T)^T,$
and $z^{*}\in\mathbb{C}^{n}$ is a stationary point of $f$ if $z^{*}=0$.
}
\end{document}